\newtheorem{theorem}{Theorem}[section]
\newtheorem{corollary}{Corollary}
\newtheorem{lemma}[theorem]{Lemma}
\newtheorem{example}{Example}
\newtheorem{condition}{Condition}
\newtheorem{conditions}{Condition}
\newcommand{\aksel}[1]{{\color{magenta}[Aksel: #1] }}
\begin{document}
\title[Consistent reconstruction of inclusions] 
      {A Bayesian approach for consistent reconstruction of inclusions}

\author{B M Afkham$^1$, K Knudsen$^1$, A K Rasmussen$^1$\footnote{Corresponding author} and T Tarvainen$^2$}
\address{$^1$ Department of Applied Mathematics and Computer Science, Technical University of Denmark, DK-2800 Kgs. Lyngby, Denmark}
\address{$^2$ Department of Technical Physics, University of Eastern Finland, Kuopio 70210, Finland}
\ead{akara@dtu.dk}
\begin{abstract}
    This paper considers a Bayesian approach for inclusion detection in nonlinear inverse problems using two known and popular push-forward prior distributions:  the star-shaped and level set prior distributions. We analyze the convergence of the corresponding posterior distributions in a small measurement noise limit. The methodology is general; it works for priors arising from any Hölder continuous transformation of Gaussian random fields and is applicable to a range of inverse problems. 
    The level set and star-shaped prior distributions are examples of push-forward priors under Hölder continuous transformations that take advantage of the structure of inclusion detection problems. 
   We show that the corresponding posterior mean converges to the ground truth in a proper probabilistic sense. Numerical tests on a two-dimensional quantitative photoacoustic tomography problem showcase the approach. The results highlight the convergence properties of the posterior distributions and the ability of the methodology to detect inclusions with sufficiently regular boundaries.
\end{abstract}
\noindent{\it Keywords}: inverse problems, Bayesian inference, inclusion detection, Gaussian prior, posterior consistency

\section{Introduction}
The Bayesian approach to inverse problems has in recent decades generated considerable interest due to its ability to incorporate prior knowledge and quantify uncertainty in solutions to inverse problems, see \cite{kaipio2005,stuart2010}. A commonly recurring objective in inverse problems for imaging science is to recover inhomogeneities or inclusions, i.e. piecewise constant features, in a medium; applications range from cancer detection in medical imaging \cite{cherepenin2001,xu2006}  to defect detection in material science \cite{hallaji2014,fuch2021}. In a Bayesian framework, this can be tackled by designing a prior distribution that favors images with these features. \\

An optimization-based approach can address this by parametrizing the relevant subset of the image space and minimizing a functional over the preimage of this parametrization, see for example \cite{bora2017}. This is visualized in Figure \ref{fig:figure-spaces}, where we consider the parametrization $\Phi$ defined on a linear space $\Theta$ and giving rise to the subset $\Phi(\Theta)$ of the image space
\begin{eqnarray}\nonumber
    L^2_\Lambda(D) = \{\gamma \in L^2(D): \Lambda^{-1} \leq \gamma \leq \Lambda \textnormal{ a.e.}\},
\end{eqnarray}
where $D$ is a bounded and smooth domain in $\mathbb{R}^d$, $d=2,3$ and $\Lambda>0$ is a constant. 
Such approaches benefit computationally from the fact that the set of images with inclusions, i.e. $\Phi(\Theta)$, form a low-dimensional subset of the image space $L^2_\Lambda(D)$. 
In the Bayesian framework, a related approach makes use of a push-forward distribution as the prior distribution, i.e. the distribution of a transformed random element of $\Theta$. 
This often leads to strong \textit{a priori} assumptions, as the prior only gives mass to the range of the parametrization. More classical prior distributions including Laplace-type priors, see for example \cite{dashti2017}, and other heavy-tailed distributions often fail to take advantage of the low dimension of such images.\\


In this paper, we consider a Bayesian approach that captures this idea for two parametrizations used in detection of inclusions for nonlinear inverse problems: the star-shaped set and level set parametrizations. These parametrizations are studied rigorously in \cite{buithanh2014,dunlop2016,igelsias2016} and remain popular to Bayesian practitioners: we mention \cite{kaipio2005,afkham2023,carpio2020,borggaard2023,yin2022,yan2020} in the case of the star-shaped inclusions and \cite{dunlop2017,chada2018,huang2021a,huang2021b,afkham2023,reese2021} for the level set inclusions, see also references therein.\\

The solution to the inverse problem in the Bayesian setting is the conditional distribution of the unknown given data, referred to as the posterior distribution. The posterior distribution has proved to be well-posed in the sense of \cite{stuart2010} for such parametrizations.
This means that the posterior distribution continuously depends on the data in some metric for distributions.
This property implies, for example, that the posterior mean and variance are continuous with respect to the data, see \cite{dashti2017}. However, such results give no guarantee as to where the posterior distribution puts its mass. \\

A more recent framework provided in \cite{monard2021} using ideas from \cite{ghoshal2000}, see also \cite{nickl2022}, gives tools to analyze the convergence of the posterior distribution for nonlinear inverse problems. 
Such results, known as `posterior consistency', address whether the sequence of posterior distributions arising from improving data (in a small noise or large sample size limit) gives mass approximating 1 to balls centered in the ground true parameter $\gamma_0$ generating the data. Nonlinearity in the forward map and parametrization makes consistency results for Gaussian posterior distributions, as in \cite{agapiou2013}, inapplicable. Currently, the setting of \cite{monard2021} and similar approaches require smoothness of the parameter of interest. A crucial condition is that the parameter set that is given most of the mass by the prior, has  small `complexity' in the sense of covering numbers, see \cite[Theorem 2.1]{ghoshal2000} or \cite[Theorem 1.3.2]{nickl2022}. Using Gaussian priors, this parameter set is typically a closed Sobolev or Hölder norm ball, see \cite[Theorem 2.2.2]{nickl2022} or \cite{monard2021}. However, such priors do not give sufficient mass to discontinuous parameters to conclude consistency. In this paper, we aim to address this, at least partially, by parametrizing the set of discontinuous parameters from a linear space $\Theta$ of sufficiently smooth functions.\\

\begin{figure}[tbp!]
	\centering
	\scalebox{0.9}{
	\tikzset{every picture/.style={line width=0.75pt}} 

\begin{tikzpicture}[x=0.75pt,y=0.75pt,yscale=-1,xscale=1]

\draw    (149.67,158.67) .. controls (187.69,136.24) and (312.99,157.62) .. (348.76,172.38) ;
\draw [shift={(351.33,173.5)}, rotate = 203.31] [fill={rgb, 255:red, 0; green, 0; blue, 0 }  ][line width=0.08]  [draw opacity=0] (7.14,-3.43) -- (0,0) -- (7.14,3.43) -- cycle    ;
\draw   (93.67,129) -- (220.33,129) -- (220.33,250) -- (93.67,250) -- cycle ;
\draw [color={rgb, 255:red, 0; green, 0; blue, 0 }  ,draw opacity=1 ][line width=0.75]    (287.33,236.83) .. controls (354.33,194.83) and (262.98,171.98) .. (284.98,150.98) .. controls (309.98,125.98) and (356.98,136.98) .. (353.98,177.98) .. controls (351.31,219.48) and (354.65,236.15) .. (377.65,236.15) ;
\draw    (167,230.67) .. controls (188.07,203.96) and (264.23,193.74) .. (305.2,202.76) ;
\draw [shift={(307.67,203.33)}, rotate = 192.41] [fill={rgb, 255:red, 0; green, 0; blue, 0 }  ][line width=0.08]  [draw opacity=0] (5.36,-2.57) -- (0,0) -- (5.36,2.57) -- cycle    ;
\draw    (136.33,196) .. controls (182.07,175.75) and (265.92,192.63) .. (307.19,202.72) ;
\draw [shift={(309.67,203.33)}, rotate = 193.79] [fill={rgb, 255:red, 0; green, 0; blue, 0 }  ][line width=0.08]  [draw opacity=0] (7.14,-3.43) -- (0,0) -- (7.14,3.43) -- cycle    ;
\draw  [draw opacity=0][fill={rgb, 255:red, 0; green, 0; blue, 0 }  ,fill opacity=1 ] (134.84,196) .. controls (134.84,194.44) and (136.11,193.18) .. (137.67,193.18) .. controls (139.23,193.18) and (140.49,194.44) .. (140.49,196) .. controls (140.49,197.56) and (139.23,198.82) .. (137.67,198.82) .. controls (136.11,198.82) and (134.84,197.56) .. (134.84,196) -- cycle ;
\draw   (261.33,129) -- (388,129) -- (388,250) -- (261.33,250) -- cycle ;
\draw  [draw opacity=0][fill={rgb, 255:red, 0; green, 0; blue, 0 }  ,fill opacity=1 ] (164.18,230.67) .. controls (164.18,229.11) and (165.44,227.84) .. (167,227.84) .. controls (168.56,227.84) and (169.82,229.11) .. (169.82,230.67) .. controls (169.82,232.23) and (168.56,233.49) .. (167,233.49) .. controls (165.44,233.49) and (164.18,232.23) .. (164.18,230.67) -- cycle ;
\draw  [draw opacity=0][fill={rgb, 255:red, 0; green, 0; blue, 0 }  ,fill opacity=1 ] (146.84,158.67) .. controls (146.84,157.11) and (148.11,155.84) .. (149.67,155.84) .. controls (151.23,155.84) and (152.49,157.11) .. (152.49,158.67) .. controls (152.49,160.23) and (151.23,161.49) .. (149.67,161.49) .. controls (148.11,161.49) and (146.84,160.23) .. (146.84,158.67) -- cycle ;
\draw  [draw opacity=0][fill={rgb, 255:red, 0; green, 0; blue, 0 }  ,fill opacity=1 ] (309.67,204.33) .. controls (309.67,202.77) and (310.93,201.51) .. (312.49,201.51) .. controls (314.05,201.51) and (315.31,202.77) .. (315.31,204.33) .. controls (315.31,205.89) and (314.05,207.16) .. (312.49,207.16) .. controls (310.93,207.16) and (309.67,205.89) .. (309.67,204.33) -- cycle ;
\draw  [draw opacity=0][fill={rgb, 255:red, 0; green, 0; blue, 0 }  ,fill opacity=1 ] (351.16,174.16) .. controls (351.16,172.6) and (352.42,171.33) .. (353.98,171.33) .. controls (355.54,171.33) and (356.8,172.6) .. (356.8,174.16) .. controls (356.8,175.71) and (355.54,176.98) .. (353.98,176.98) .. controls (352.42,176.98) and (351.16,175.71) .. (351.16,174.16) -- cycle ;
\draw   (428.33,129) -- (555,129) -- (555,250) -- (428.33,250) -- cycle ;
\draw    (312.49,204.33) .. controls (341.91,181.31) and (451.17,191.25) .. (495.06,207.97) ;
\draw [shift={(497.67,209)}, rotate = 200.86] [fill={rgb, 255:red, 0; green, 0; blue, 0 }  ][line width=0.08]  [draw opacity=0] (7.14,-3.43) -- (0,0) -- (7.14,3.43) -- cycle    ;
\draw  [draw opacity=0][fill={rgb, 255:red, 0; green, 0; blue, 0 }  ,fill opacity=1 ] (497.67,210) .. controls (497.67,208.44) and (498.93,207.18) .. (500.49,207.18) .. controls (502.05,207.18) and (503.31,208.44) .. (503.31,210) .. controls (503.31,211.56) and (502.05,212.82) .. (500.49,212.82) .. controls (498.93,212.82) and (497.67,211.56) .. (497.67,210) -- cycle ;
\draw    (353.98,174.16) .. controls (383.4,151.14) and (425.61,147.47) .. (466.81,163.65) ;
\draw [shift={(469.33,164.67)}, rotate = 201.43] [fill={rgb, 255:red, 0; green, 0; blue, 0 }  ][line width=0.08]  [draw opacity=0] (7.14,-3.43) -- (0,0) -- (7.14,3.43) -- cycle    ;
\draw  [draw opacity=0][fill={rgb, 255:red, 0; green, 0; blue, 0 }  ,fill opacity=1 ] (469.33,165.67) .. controls (469.33,164.11) and (470.6,162.84) .. (472.16,162.84) .. controls (473.71,162.84) and (474.98,164.11) .. (474.98,165.67) .. controls (474.98,167.23) and (473.71,168.49) .. (472.16,168.49) .. controls (470.6,168.49) and (469.33,167.23) .. (469.33,165.67) -- cycle ;

\draw (101,135) node [anchor=north west][inner sep=0.75pt]    {\Large $\Theta $};
\draw (360,132) node [anchor=north west][inner sep=0.75pt]    {\Large $L_{\Lambda}^{2}$};
\draw (324,227.23) node [anchor=north west][inner sep=0.75pt]    {\large $\Phi(\Theta) $};
\draw (318,205) node [anchor=north west][inner sep=0.75pt]    {\Large $\gamma $};
\draw (149,225.22) node [anchor=north west][inner sep=0.75pt]    {\Large $\theta $};
\draw (234.33,210.23) node [anchor=north west][inner sep=0.75pt]    {\Large $\Phi $};
\draw (535,135) node [anchor=north west][inner sep=0.75pt]    {\Large $\mathcal{Y}$};
\draw (403.33,207.23) node [anchor=north west][inner sep=0.75pt]    {\Large $\mathcal{G}$};

\end{tikzpicture}
	}
    \caption{A visiualization of the parametrization $\Phi:\Theta \rightarrow L^2_\Lambda(D)$ and forward map $\mathcal{G}:L^2_\Lambda(D)\rightarrow \mathcal{Y}$.}
    \label{fig:figure-spaces}
\end{figure}
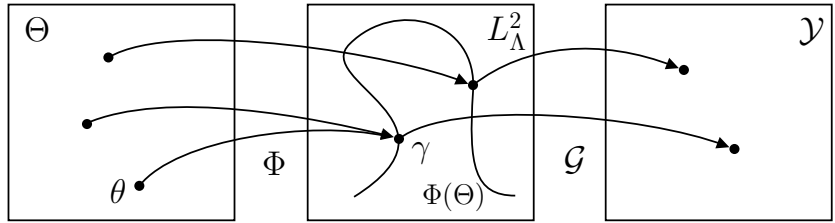
We aim to recover an element $\gamma$, which we call the \textit{image} or the \textit{physical parameter}, in a subset $\Phi(\Theta)$ of $L^2_\Lambda(D)$ for some continuous map $\Phi:\Theta\rightarrow L^2_\Lambda(D)$.
We consider a nonlinear forward map $\mathcal{G}:L^2_\Lambda(D)\rightarrow \mathcal{Y}$ mapping into a real separable Hilbert space $\mathcal{Y}$ with inner product $\langle\cdot,\cdot\rangle$ and norm $\|\cdot\|$. We refer again to Figure \ref{fig:figure-spaces} for an overview of this setup. 
This setting allows us to make use of the framework provided in \cite{monard2021}, but transfer the complexity condition from subsets of $L^2_\Lambda(D)$ to subsets of $\Theta$, see Section \ref{sec:metric-entropy}. In the context of inclusion detection, this means we can detect inclusions with sufficiently smooth boundaries. \\

Our contributions can be summarized as follows:
\begin{itemize}
    \item We present a posterior consistency result for the general setting mentioned above, when the parametrization $\Phi$ satisfies mild conditions in regularity. We use the framework provided by \cite{monard2021} extending to Hölder continuous $\mathcal{G}$ and push-forward priors. In particular, this gives an estimator, the posterior mean, which converges in probability to the true physical parameter in the small noise limit. 
    Formally, this means there is an algorithm $\hat{\gamma}$ defined for noisy measurements $Y$ depending on the noise level $\varepsilon>0$ such that
    \begin{eqnarray}\nonumber\|\hat{\gamma}(Y)-\gamma_0\|_{L^2(D)}\rightarrow 0\end{eqnarray}
    in probability as  $\varepsilon\rightarrow 0$. This statement will be made precise in Section \ref{section:2}. Furthermore, the rate of convergence is determined in part by the smoothness of elements in $\Theta$ and the regularity of the parametrization. 
    \item We show that two parametrizations for inclusion detection, a star-shaped set parametrization and a smoothened level set parametrization, satisfy the conditions for this setup. This verifies and quantifies the use of such parametrizations.
    \item We numerically verify the approach based on the two parametrizations in a small noise limit for a nonlinear PDE-based inverse problem using Markov chain Monte Carlo (MCMC) methods. We consider a two-dimensional quantitative photoacoustic (QPAT) problem of detecting an absorption coefficient. We derive a new stability estimate following \cite{choulli2021,bal2011}.
\end{itemize}

We note that the framework of \cite{monard2021} in e.g. \cite{abraham2019} and \cite{giordano2020} shows consistency for `regular link functions' $\Phi$ (defined in \cite{nickl2020b}), which are smooth and bijective. The archetypal example is $\Phi=\exp$ or a smoothened version to ensure positivity of the physical parameter $\gamma$. As we shall see, injectivity and inverse continuity are not necessary for the proofs when we want to show consistency in $L^2_\Lambda(D)$. One novelty of our work is to show that this observation has relevance: 
we seek to recover the physical parameter $\gamma$ instead of a non-physical parameter in $\Theta$ that generated it. As we shall see, a natural parametrization for star-shaped inclusions is Hölder continuous from a suitable space $\Theta$ to $L^2_\Lambda(D)$. The same holds true for a smoothened level set parametrization, which we will encounter in Section \ref{sec:level-set}.\\ 

The structure of the paper can be summarized as follows. In Section \ref{section:2}, we recall a few key elements of the Bayesian framework in a `white noise' model as outlined in \cite{stuart2010} and \cite[Section 7.4]{nickl2020}, including the notion of posterior consistency with a rate. In Section \ref{section:3}, we show that Hölder continuity of $\Phi$, some smoothness of elements in $\Phi(\Theta)$ and conditional continuity of $\mathcal{G}^{-1}$, suffice to show that the posterior mean 
converges to the ground truth $\gamma_0$ in $L^2(D)$ as the noise goes to zero. Section \ref{section:4} considers these conditions for the level set and star-shaped set parametrizations, which are well-known in the literature. In Section \ref{section:5}, we consider the two-dimensional quantitative photoacoustic tomography problem suited for piecewise constant parameter inference. Then, Section \ref{section:6} gives background to our numerical tests and results that emulate the theoretical setting of Section \ref{section:3}. We present conclusive remarks in Section \ref{sec:conclusions}.\\

In the following, we let random variables be defined on a measure space $(\Omega,\mathcal{F},\mathrm{Pr})$. For a metric space $\mathcal{Z}_1$ the Borel $\sigma$-algebra is denoted by $\mathcal{B}(\mathcal{Z}_1)$. If $F:\mathcal{Z}_1\rightarrow \mathcal{Z}_2$ is a measurable map between the measure space $(\mathcal{Z}_1,\mathcal{B}(\mathcal{Z}_1),m)$ and the measurable space $(\mathcal{Z}_2,\mathcal{B}(\mathcal{Z}_2))$, then $Fm$ denotes the push-forward measure defined by $Fm (B) = m(F^{-1}(B))$ for all $B\in \mathcal{B}(\mathcal{Z}_1)$. 
We denote by $L^2(\Omega,\mathrm{Pr})$ the space of real-valued square integrable measurable functions from $(\Omega,\mathcal{F},\mathrm{Pr})$ to $(\mathbb{R},\mathcal{B}(\mathbb{R}))$. When $\mathrm{Pr}$ is the Lesbegue measure on $\mathbb{R}$, we simply write $L^2(\Omega)$. We call probability measures defined on $\mathcal{B}(\mathcal{Z}_1)$ Borel distributions.

\section{The Bayesian approach to inverse problems}\label{section:2}
Bayesian inference in inverse problems centers around a posterior distribution. This is formulated by Bayes' rule once a prior distribution in $L^2_\Lambda(D)$ has been specified and the likelihood function has been determined by the measurement process. In this paper, we consider a `continuous' model of indirect observations
\begin{equation}\label{eq:obs}
    Y = \mathcal{G}(\gamma) + \varepsilon_n \xi,
\end{equation}
for a continuous forward map $\mathcal{G}:L^2_\Lambda(D)\rightarrow \mathcal{Y}$, where the separable Hilbert space $\mathcal{Y}$ has an orthonormal basis $\{e_k\}_{k=1}^\infty$. Here $\xi$ is `white noise' in $\mathcal{Y}$ defined below in \eref{eq:KLexp}. We denote the noise level by $\varepsilon_n:=\frac{\sigma}{\sqrt{n}}$ for some $\sigma>0$ and $n\in \mathbb{N}$, which has this convenient form to study a countable sequence of posterior distributions in decreasing noise, i.e. for growing $n$. When we write $Y$, it is understood that this depends on $n$ and $\gamma$. The rate $n^{-1/2}$ is natural: if $\mathcal{Y}$ is a subspace of Hölder continuous functions on a bounded domain, this observation model is equivalent to observing $n$ discrete point evaluations of $\mathcal{G}(\gamma)$ with added standard normal noise as $n\rightarrow \infty$, see \cite{nickl2020} and \cite[Section 1.2.3]{gine2016}.\\

Given a Borel prior distribution $\Pi$ on $L^2_\Lambda(D)$, the posterior distribution $\Pi(\cdot|Y)$ is proportional to the product of the likelihood and prior. Indeed, according to Bayes' rule, if $\mathcal{Y}$ is finite-dimensional, the posterior distribution has a density (Radon-Nikodym derivative) of the form
\begin{eqnarray}\nonumber\frac{d\Pi(\cdot|y)}{d\Pi}(\gamma) = \frac{1}{Z}  \exp\left (-\frac{1}{2\varepsilon_n^2}\|\mathcal{G}(\gamma)-y\|^2 \right), \quad \forall y \in \mathcal{Y}\end{eqnarray}
where $Z>0$ is a constant, see for example \cite{dashti2017,ghosal2017}. This is well-defined for almost all $y$ under the marginal distribution of $Y$. 
The relevance of this object emerges, when evaluating it in a realization $Y(\omega)=y$. Using inner product rules we can rewrite this as
\begin{equation}\label{eq:posteriorL2} \fl
    \Pi(B|Y) = \frac{1}{Z}\int_{B} \exp\left (\frac{1}{\varepsilon_n^2}\langle Y, \mathcal{G}(\gamma) \rangle - \frac{1}{2\varepsilon_n^2}\|\mathcal{G}(\gamma)\|^2 \right) \, \Pi(d\gamma), \quad B\in \mathcal{B}(L^2_\Lambda(D)),
\end{equation}
where the contribution of $Y$ is absorbed in the constant $Z>0$. The purpose of the following paragraphs is to argue that this formula remains valid, when $\mathcal{Y}$ is infinite-dimensional with the interpretation that $\langle Y, \mathcal{G}(\gamma) \rangle$ is a Gaussian random variable
defined by
\begin{equation}\label{eq:white-noise}
    \langle Y, y \rangle := \langle \mathcal{G}(\gamma), y \rangle + \varepsilon_n W(y),
\end{equation}
where $W$ is a white noise process on $\mathcal{Y}$ satisfying $\mathbb{E}[W(y)] = 0$ and $\mathbb{E}[W(y)W(y')]=\langle y,y' \rangle$, see \cite{gine2016}[Example 2.1.11]. To this end, let
\begin{equation}\label{eq:KLexp}
    \xi := \sum_{k=1}^\infty \xi_k e_k, \qquad \xi_k \stackrel{i.i.d.}{\sim} N(0,1),
\end{equation}
which is convergent in $\mathcal{Y}_{-}$ in the mean square sense, see \cite[Section 2.4]{dashti2017}, where $\mathcal{Y}_{-}$ is the Hilbert space $\mathcal{Y}_{-}$, see also \cite[Section 7.4]{nickl2020}, defined by
\begin{eqnarray}\nonumber\mathcal{Y}_- := \left\{f=\sum_{k=1}^\infty f_k e_k : \|f\|_{-}^2:=\sum_{k=1}^\infty \lambda_k^{2}f_k^2 < \infty \right\}\end{eqnarray}
for $\lambda_k>0$ and $\{\lambda_k\}_{k=1}^\infty \in \ell^2$. Note $\xi$ is a Gaussian random element of $\mathcal{Y}_{-}$, since it is the Karhunen-Loeve expansion of a mean zero Gaussian random element with covariance operator $K:\mathcal{Y}_{-}\rightarrow \mathcal{Y}_{-}$ defined by $Ke_k=\lambda_k^2 e_k$, see \cite{dashti2017}. 
Then $Y$ is also a $\mathcal{Y}_{-}$-valued Gaussian random element, since it is a translation of $\varepsilon_n \xi$ by an element in $\mathcal{Y}$.
We denote the distributions of $\varepsilon_n\xi$ and $Y$ in $\mathcal{Y}_{-}$  by $P_n$ and $P_n^\gamma$, respectively. We can think of $P_n^\gamma$ as the data-generating distribution indexed by $\gamma$, the physical parameter generating the data, and $n$, which controls the noise regime.\\

The likelihood function arises as the density (Radon-Nikodym derivative) of $P_n^\gamma$ with respect to $P_n$. This is a consequence of the Cameron-Martin theorem in the Hilbert space $\mathcal{Y}$.
The theorem gives the likelihood function as
\begin{eqnarray}\nonumber
    p_n^\gamma(Y) := \frac{dP_{n}^\gamma}{dP_n}(Y) = \exp \left (\frac{1}{\varepsilon_n^2}\langle Y, \mathcal{G}(\gamma)\rangle-\frac{1}{2\varepsilon_n^2}\|\mathcal{G}(\gamma)\|^2 \right),
\end{eqnarray}
here evaluated in $Y$, see \cite{gine2016}[Proposition 6.1.5]. See also a derivation in \cite[Section 7.4]{nickl2020}, for which it suffices that $\gamma \mapsto \mathcal{G}(\gamma)$ is continuous from (the standard Borel space) $L^2_\Lambda(D)$ with the $L^2(D)$-topology into $\mathcal{Y}$. \\

Then Bayes' rule \cite[p. 7]{ghosal2017} formulates \textit{a} posterior distribution as a measure in $L^2_\Lambda(D)$ as in the right-hand side of \eref{eq:posteriorL2}, well-defined for almost all $Y$. According to  \cite{ghosal2017}, this equals almost surely a Markov kernel, which we will call \textit{the} posterior distribution and also denote it by $\Pi(\cdot |Y)$. That is to say that $B\mapsto \Pi(B|Y(\omega))$ is a measure for every $\omega \in \Omega$ and $\omega \mapsto \Pi(B|Y(\omega))$ is measurable for every $B \in \mathcal{B}(L^2_\Lambda(D))$. In particular, $\omega \mapsto \Pi(B|Y(\omega))$ is a $[0,1]$-valued random variable.


\subsubsection{Convergence in probability} In preparation for the subsequent section, we recall the notion of convergence in probability. Let $t_n>0$ be a decreasing sequence going to zero. For a fixed $\gamma_0\in L^2_\Lambda(D)$ and a sequence of measurable functions $f_n: \mathcal{Y}_{-}\rightarrow \mathbb{R}$ we say that the sequence of random variables $\{f_n(Y)\}_{n=1}^\infty$ converges to $\upsilon\in \mathbb{R}$ in $P_n^{\gamma_0}$-probability with rate $t_{n}$ as $n \rightarrow \infty$ if there exists a constant $C>0$ such that
\begin{equation}\label{eq:convergenceinprob}
P_n^{\gamma_0}(y \in \mathcal{Y}_{-}: |f_n(y)-\upsilon| \leq Ct_n) \rightarrow 1,
\end{equation}
as $n\rightarrow \infty$. We consider the following two cases, where we recall that both the posterior distribution and $Y$ depend tacitly on $n$.
\begin{enumerate}
    \item     For a sequence of sets $\{B_n\}_{n=1}^\infty$ in $\mathcal{B}(L^2_\Lambda(D))$, we could claim that
    \begin{eqnarray}\nonumber\Pi(B_n|Y) \rightarrow 1 \quad \textnormal{ in $P_n^{\gamma_0}$-probability},\end{eqnarray}
    with rate $t_n$ as $n\rightarrow \infty$. That is, $f_n(Y) = \Pi(B_n|Y)$ and $\upsilon = 1$.
    If this is the case for $B_n := \{\gamma : \|\gamma-\gamma_0\|_{L^2(D)} \leq C_0r_{n}\}$ for some decreasing sequence $r_n>0$ going to zero and constant $C_0>0$, we say that the posterior distribution \textit{contracts around} or is \textit{consistent in} $\gamma_0$ at rate $r_n$.
    \item \label{example:posterior-mean} Denote by $E[\gamma|Y]$ the mean (`posterior mean') with respect to $\Pi(\cdot|Y)$. This is defined in the sense of a Bochner integral,
\begin{eqnarray}\nonumber E[\gamma|Y] := \int_{L^2_\Lambda(D)} \gamma \, \Pi(d\gamma|Y),\end{eqnarray}
which is well-defined by \cite[Theorem 2]{diestel1977}, since for all $\omega\in \Omega$ 
\begin{eqnarray}\nonumber\int_{L^2_\Lambda(D)} \|\gamma\|_{L^2_\Lambda(D)}\, d\Pi(d\gamma|Y(\omega)) \leq \Lambda\sqrt{\mathrm{vol}(D)} < \infty.\end{eqnarray}
Then $\omega \mapsto E[\gamma | Y(\omega)]$ is an $L^2_\Lambda(D)$-valued random element by the definition of the Bochner integral and by the measurability of pointwise limits of measurable functions, see \cite[Theorem 4.2.2]{dudley1989}. We could claim that
\begin{eqnarray}\nonumber
    \|E[\gamma|Y]-\gamma_0\|_{L^2(D)} \rightarrow 0 \quad \textnormal{ in $P_n^{\gamma_0}$-probability},
\end{eqnarray}
with rate $t_n$ as $n\rightarrow \infty$. That is, $f_n(Y) = \|E[\gamma|Y]-\gamma_0\|_{L^2(D)}$ and $\upsilon=0$.
\end{enumerate}

\subsection{Posterior consistency}\label{sec:post-const}
In this section we recall sufficient conditions posed in \cite{monard2021}, see also \cite{nickl2022}, such that the posterior distribution in our specific setup is consistent. More specifically, we recall for which ground truths $\gamma_0 \in L^2_\Lambda(D)$, forward models $\mathcal{G}$ and prior distributions $\Pi$
\begin{equation}\label{eq:postcontr}
    \Pi(\gamma: \|\gamma-\gamma_0\|_{L^2(D)} \leq C\tilde{r}_{n} | Y) \rightarrow 1 \quad \textnormal{ in $P_n^{\gamma_0}$-probability},
\end{equation}
as $n\rightarrow \infty$ for some positive decreasing sequence $\tilde{r}_n$ going to zero. A consequence of this result, under additional assumptions on the prior, is that the posterior mean converges to $\gamma_0$ in $P_n^{\gamma_0}$-probability, see \cite[Theorem 2.3.2]{nickl2022} or \cite[Theorem 8.8]{ghosal2017},
\begin{equation}
    \|E[\gamma|Y]-\gamma_0\|_{L^2(D)} \rightarrow 0 \quad \textnormal{ in $P_n^{\gamma_0}$-probability},
\end{equation}
with rate $r_n$ as $n\rightarrow \infty$. This is the case of \eref{example:posterior-mean} above. In the nonlinear inverse problem setting, posterior consistency in the sense of \eref{eq:postcontr} follows from a two-step procedure with the use of conditional stability estimates.

\begin{enumerate}[leftmargin=*,label={\textbf{Step \arabic*}}]
    \item \label{step:step1} The first step reduces convergence of $\{\Pi(\tilde{B}_n|Y)\}_{n=1}^\infty$ from sets of the form
    \begin{eqnarray}\nonumber\tilde{B}_n = \{\gamma \in L^2_{\Lambda}(D): \|\gamma-\gamma_0\|_{L^2(D)}\leq C\tilde{r}_n\}\end{eqnarray}
    to sets of the form
    \begin{eqnarray}\nonumber B_n = \{\gamma \in L^2_{\Lambda}(D): \|\mathcal{G}(\gamma)-\mathcal{G}(\gamma_0)\| \leq Cr_n, \gamma \in A_n\}.\end{eqnarray}
     Indeed, for specially chosen subsets $A_n \subset L^2_\Lambda(D)$ which may depend on $n$, assume we have the estimate
    \begin{equation}\label{eq:inverse-stability-est}
            \|\gamma_1-\gamma_2\|_{L^2(D)} \leq \|\mathcal{G}(\gamma_1)-\mathcal{G}(\gamma_2)\|^\nu, 
    \end{equation}
    for all $\gamma_1,\gamma_2 \in A_n$ and some $\nu >0$. Then $B_n\subset \tilde{B}_n$ and hence
    \begin{equation}\label{eq:step-2-essence}
        \Pi(B_n|Y)\leq \Pi(\tilde{B}_n|Y),
    \end{equation}
    where $\tilde{r}_n=r_n^\nu$.
    \item \label{step:step2} The second step involves showing that $\Pi(B_n|Y)$ converges to $1$ in $P_n^{\gamma_0}$-probability as $n\rightarrow \infty$. This is posterior consistency on the `forward level'. 
\end{enumerate}
Combining \ref{step:step1} and \ref{step:step2}, we find that $\Pi(\tilde{B}_n|Y)$ converges to $1$ in $P_n^{\gamma_0}$-probability as $n\rightarrow \infty$. The `conditional' stability estimate of the first step is of independent interest for many inverse problems in literature and usually requires an in-depth analysis of the inverse problem at hand. In this paper we treat first \eref{eq:inverse-stability-est} as an assumption, see Condition \ref{cond:forward-map}. Although any modulus of continuity will do for the first step in this two-step procedure, for our concrete example in photoacoustic tomography we will show a Lipschitz stability estimate that holds for all $\gamma \in L^2_\Lambda(D)$, see Section \ref{section:5}. Our main motivation for including $A_n$ in the analysis is to keep the exposition generally applicable. 

One of the contributions of \cite{monard2021,nickl2022} is to address \ref{step:step2} for a random design regression observation model using the Theorem 2.1 in \cite{ghoshal2000} and the equivalence between the distance (semi-metric)
\begin{eqnarray}\nonumber d_{\mathcal{G}}(\gamma_1,\gamma_2):= \|\mathcal{G}(\gamma_1)-\mathcal{G}(\gamma_2)\|\end{eqnarray}
and the Hellinger distance, see \cite{nickl2022}, of the data-generating distributions (corresponding to our $p_n^\gamma$). Theorem 28 in \cite{nickl2020}, see also \cite[Theorem 7.3.5]{gine2016}, adapts the proof to the observation model \eref{eq:obs}, which is what we will use. One can see this second step as showing posterior consistency in $\mathcal{G}(\gamma_0)$ at rate $r_n$ for the push-forward $\mathcal{G}\Pi(\cdot|Y)$ as in  \cite{vollmer2013}. Below, we use the covering number $N(A,d,\rho)$ for a semimetric $d$, which denotes the minimum number of closed $d$-balls of radius $\rho>0$ needed to cover $A$, see Appendix \ref{app:cov-numbers} for a precise definition. Then the condition to complete \ref{step:step2} is as follows. 
 
\begin{conditions}\label{cond:1}
Let $\Pi=\Pi_n$ be a sequence of prior distributions in $L^2_\Lambda(D)$. Let $\mathcal{G}$ be the forward model $\mathcal{G}:L^2_\Lambda(D)\rightarrow \mathcal{Y}$ and $\gamma_0\in L^2_\Lambda(D)$ the ground truth. Let $r_n$ satisfy $r_n = n^{-a}$ for some $0<a<1/2$. Suppose that,
\begin{enumerate}[label={\textbf{A.\arabic*}}]
    \item \label{cond:1.1} the prior gives enough mass to contracting balls $B_{\mathcal{G}}(\gamma_0,r_n):=\{\gamma:d_{\mathcal{G}}(\gamma,\gamma_0)\leq r_n\}$.
        \begin{equation}
            \Pi(B_{\mathcal{G}}(\gamma_0,r_n)) \geq e^{-C_1nr_n^2}, \quad C_1>0,
        \end{equation}
    \item \label{cond:1.2} there exist sets $A_n$ that are almost the support of $\Pi$ in the sense that
    \begin{equation}
        \Pi(L^2_\Lambda(D)\setminus A_n) \leq e^{-C_2nr_n^2}, \quad C_2>C_1+4,
    \end{equation}
    \item \label{cond:1.3} and that there exists a constant $m_0>0$ such that
        \begin{equation}
            \log N(A_n, d_{\mathcal{G}},m_0 r_n) \leq C_3n r_n^2, \quad C_3>0,
        \end{equation}
\end{enumerate}
all for $n$ large enough.
\end{conditions}
Condition \ref{cond:1.1} is a sufficient condition such that the denominator of the posterior distribution cannot decay too fast as $n\rightarrow \infty$. This is helpful when showing $\Pi(B_n^c |Y)\rightarrow 0$ in $P_n^{\gamma_0}$-probability as $n\rightarrow \infty$. On the other hand Condition \ref{cond:1.2} and \ref{cond:1.3} are conditions that give control over the numerator in a sense that is made precise in the proof of Theorem 2.1 in \cite{ghoshal2000} (or for example Theorem 28 in \cite{nickl2020}). It is also a trade-off; the sets $A_n$ should be large enough such that they are almost the support of the prior, but small enough such that the covering number increases sufficiently slowly when $n\rightarrow \infty$. In the general case, \ref{step:step2} is completed by the following result proved in Appendix \ref{app:testing}.
\begin{theorem}\label{thm:contr}
    Let $\Pi(\cdot|Y)$ be the sequence of posterior distributions arising for the model \eref{eq:obs} with $\gamma_0 \in L^2_\Lambda(D)$, $\mathcal{G}$ and prior distributions $\Pi=\Pi_n$ satisfying Condition \ref{cond:1} for some rate $r_n$. Then, there exists $C_0=C_0(C_2,C_3,m_0,\sigma)$ such that
    \begin{equation}\label{eq:postcontrforward}
    \Pi(B_{\mathcal{G}}(\gamma_0,C_0r_n) \cap A_n | Y) \rightarrow 1 \quad \textnormal{ in $P_n^{\gamma_0}$-probability},
    \end{equation}
    with rate $e^{-bnr_n^2}$ for all $0<b<C_2-C_1-4$ as $n \rightarrow \infty$. 
\end{theorem}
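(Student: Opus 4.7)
The plan is to follow the Ghosal–Ghosh–van der Vaart testing framework as adapted to the Gaussian white noise model in \cite[Theorem 7.3.5]{gine2016} and \cite[Theorem 28]{nickl2020}, transcribing it to the push-forward prior $\Pi$ on $L^2_\Lambda(D)$. Writing the posterior in the ratio form
\begin{eqnarray}\nonumber
    \Pi(B|Y) = \frac{\int_B (p_n^{\gamma}/p_n^{\gamma_0})(Y)\,\Pi(d\gamma)}{\int (p_n^{\gamma}/p_n^{\gamma_0})(Y)\,\Pi(d\gamma)}
\end{eqnarray}
for $B = (B_{\mathcal{G}}(\gamma_0, C_0 r_n) \cap A_n)^c$, the goal is to compare a high-probability lower bound on the denominator with an expectation-level upper bound on the numerator obtained via tests.

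For the denominator, the explicit Gaussian log-likelihood ratio under $P_n^{\gamma_0}$,
\begin{eqnarray}\nonumber
    \log\frac{p_n^{\gamma}}{p_n^{\gamma_0}}(Y) = -\frac{1}{2\varepsilon_n^2}d_\mathcal{G}(\gamma,\gamma_0)^2 + \varepsilon_n^{-1}W(\mathcal{G}(\gamma)-\mathcal{G}(\gamma_0)),
\end{eqnarray}
combined with Chebyshev plus Fubini (the standard evidence-lower-bound argument) and Condition \ref{cond:1.1} produces an event $E_n$ with $P_n^{\gamma_0}(E_n^c) \to 0$ on which
\begin{eqnarray}\nonumber
    \int (p_n^{\gamma}/p_n^{\gamma_0})(Y)\,\Pi(d\gamma) \geq \Pi(B_{\mathcal{G}}(\gamma_0,r_n))\, e^{-K_0 nr_n^2/\sigma^2} \geq e^{-(C_1+K_0/\sigma^2)nr_n^2}
\end{eqnarray}
for an absolute constant $K_0$. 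For the numerator I would build tests from Condition \ref{cond:1.3}: any two points at $d_{\mathcal{G}}$-distance at least $\rho$ can be separated by a Neyman--Pearson likelihood ratio test with both errors bounded by $e^{-n\rho^2/(8\sigma^2)}$, so covering $A_n$ by $N(A_n,d_\mathcal{G},m_0 r_n) \leq e^{C_3 nr_n^2}$ balls and taking the maximum of per-ball tests with centres lying outside $B_\mathcal{G}(\gamma_0,C_0 r_n/2)$ yields a test $\psi_n$ with
\begin{eqnarray}\nonumber
    E_{\gamma_0}\psi_n \leq e^{(C_3-K)nr_n^2}, \qquad \sup_{\gamma \in A_n,\ d_\mathcal{G}(\gamma,\gamma_0)\geq C_0 r_n} E_\gamma(1-\psi_n) \leq e^{-Knr_n^2},
\end{eqnarray}
where $K = K(C_0,m_0,\sigma)$ can be made as large as desired by enlarging $C_0$.

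Combining, one splits $\mathbf{1} = \psi_n + (1-\psi_n)$ and decomposes $B = A_n^c \cup (A_n \setminus B_{\mathcal{G}}(\gamma_0,C_0 r_n))$. Fubini applied to the $(1-\psi_n)$ part together with the denominator bound on $E_n$ gives
\begin{eqnarray}\nonumber
    E_{\gamma_0}\bigl[\mathbf{1}_{E_n}\Pi(B|Y)\bigr] \leq e^{(C_3-K)nr_n^2} + e^{(C_1+K_0/\sigma^2)nr_n^2}\bigl(e^{-C_2 nr_n^2} + e^{-K nr_n^2}\bigr),
\end{eqnarray}
where Condition \ref{cond:1.2} handles $\Pi(A_n^c)$. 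Choosing $C_0$ large enough that $K > C_3 + b$ and $K > C_1 + K_0/\sigma^2 + b$, and invoking $C_2 > C_1 + 4$, bounds the right-hand side by $e^{-bnr_n^2}$ for every $b < C_2 - C_1 - 4$. Markov's inequality together with $P_n^{\gamma_0}(E_n^c) \to 0$ then delivers the stated convergence in $P_n^{\gamma_0}$-probability at this rate. The main obstacle throughout is the bookkeeping of constants: $C_0 = C_0(C_2,C_3,m_0,\sigma)$ must be chosen so that the single-test exponent $K \sim C_0^2/\sigma^2$ simultaneously dominates the covering exponent $C_3$ in the type-I bound and the denominator inflation factor $C_1 + K_0/\sigma^2$ in the type-II bound, while the slack $C_2 - C_1 > 4$ is exactly what is needed to absorb the $\Pi(A_n^c)$ contribution against the same inflation factor. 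Beyond this, all remaining steps are direct Fubini/Markov manipulations or straightforward transcriptions from \cite[Theorem 28]{nickl2020}.
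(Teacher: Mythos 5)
Your proposal is correct and follows essentially the same route as the paper: the paper constructs the tests in Lemma \ref{lemma:tests} via a maximum-likelihood/covering argument driven by Condition \ref{cond:1.3} and then invokes Theorem 28 of \cite{nickl2020} (with the modifications of \cite[Theorem 1.3.2]{nickl2022}) for precisely the denominator lower bound, the Fubini/Markov combination, and the excess-mass step that you spell out explicitly. The only cosmetic difference is that the paper's test uses a shell (peeling) decomposition of $A_n\setminus B_{\mathcal{G}}(\gamma_0,\rho)$ with a geometric sum over annuli rather than your single-scale covering with a maximum of per-ball tests, which does not change the substance of the argument.
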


Given the preceding result, we can conclude posterior consistency in $\gamma_0$ at rate $\tilde{r}_n$ as in \ref{step:step1}, if we have a conditional stability estimate as \eref{eq:inverse-stability-est}.

\subsection{Markov chain Monte Carlo}\label{sec:mcmc}
While Section \ref{sec:post-const} concludes in an abstract way the usefulness of the posterior distribution, in this section we briefly recall methods to approximate it. We consider MCMC methods that approximate $E[\gamma|Y]$ (or other statistics) from averages of samples from a Markov chain that has the posterior distribution as its stationary distribution.
Since the composition $\mathcal{G}\circ \Phi$ maps $\Theta$ into $\mathcal{Y}$ continuously by assumption, given a prior distribution $\Pi_\theta$ in $\Theta$, there exists a posterior distribution $\Pi_\theta(\cdot|Y)$ in $\Theta$ of the form
\begin{equation}\label{eq:posterior-in-Theta} \fl
    \Pi_\theta(B|Y) := \frac{1}{Z}\int_{B} \exp\left (\frac{1}{\varepsilon_n^2}\langle Y, \mathcal{G}(\Phi(\theta)) \rangle - \frac{1}{2\varepsilon_n^2}\|\mathcal{G}(\Phi(\theta))\|^2 \right) \, \Pi_\theta(d\theta), \quad  B\in \mathcal{B}(\Theta).
\end{equation}
Naturally, if $\Pi = \Phi\Pi_\theta$, then by a change of variables  
\begin{eqnarray}\nonumber \Pi(\cdot|Y) = \Phi\Pi_\theta(\cdot|Y),\end{eqnarray}
see for example \cite[Theorem B.1]{vollmer2013}, i.e. $\theta\sim \Pi_\theta(\cdot|Y)$ implies $\Phi(\theta)\sim \Pi(\cdot|Y)$. This gives rise to the following `high-level' algorithm: given a realization $y\in \mathcal{Y}_{-}$ of $Y$, 
\begin{enumerate}[label={\arabic*.}]
    \item choose $\theta^{(0)}\in \Theta$ and $K>0$,
    \item generate $\{\theta^{(k)}\}_{k=1}^K$ in $\Theta$ using $\theta^{(0)}$ as initial condition with an MCMC method targeting $\Pi_\theta(\cdot | y)$, and
    \item return $\{\Phi(\theta^{(k))}\}_{k=1}^K$.
\end{enumerate}
For our numerical examples, we use the preconditioned Crank-Nicolson (pCN) MCMC method, see \cite{cotter2013}. This method uses only single evaluation of the log-likelihood function every iteration and is hence attractive for expensive PDE-based forward maps. It is well-defined when $\Theta$ is a Hilbert space and possesses favorable theoretical properties, see \cite{cotter2013,hairer2014}. The idea to generate samples from $\Pi(\cdot|y)$ by pushing forward samples also appears in certain reparametrizations of posterior distributions for the use of hyperparameters, see \cite{dunlop2020}. 

\section{Posterior consistency using parametrizations}
\label{section:3}
In this section, we follow \cite{monard2021,nickl2022} in their approach to satisfy Condition \ref{cond:1}. In the case where $\Pi=\Phi\Pi_\theta$ for $\Pi_\theta$ Gaussian and $ \mathcal{G}\circ \Phi$ Lipschitz continuous, the approach is the same. We give a brief recap for the case where $ \mathcal{G}\circ \Phi$ is Hölder continuous for the convenience of the reader. We tackle this by introducing three new conditions convenient for an inverse problem setting. We oppose this to Condition \ref{cond:1}, which is general and applicable in many statistical inference problems. As our base case, we assume $\Theta=H^{\beta}(\mathcal{X})$, where $\mathcal{X}$ is either the $d'$-dimensional torus or a bounded Lipschitz domain $\mathcal{X}\subset \mathbb{R}^{d'}$, $d'\geq1$ and $\beta>d'/2$. We include here the torus in our considerations, since it is a numerically convenient setting. For more general parametrizations for inclusion detection, we shall need small deviations from this setting. However, these cases will take the same starting point of $H^{\beta}(\mathcal{X})$ in Section \ref{section:4}. We begin by stating conditions on $\Phi$, $\mathcal{G}$ and $\Pi$ so that Condition \ref{cond:1} is satisfied. To do this, we introduce the following subset of $\Theta$.
\begin{eqnarray}\nonumber
    \mathcal{S}_{\beta}(M) = \{ \theta \in \Theta: \|\theta\|_{H^{\beta}(\mathcal{X})}< M \}.
\end{eqnarray}
We then require the following conditions of $\Phi$. 
\begin{condition}[On the parametrization $\Phi$]\label{cond:unifcont}
    For any $\theta_1,\theta_2\in\mathcal{S}_{\beta}(M)$ for some $M>0$, let
    \begin{equation}\label{eq:paramcont}
        \|\Phi(\theta_1)-\Phi(\theta_2)\|_{L^2(D)}\leq C_{\Phi} \|\theta_1-\theta_2\|^\zeta_{L^\infty(\mathcal{X})}
    \end{equation}
    for some constant $C_{\Phi}(M)>0$ and $0<\zeta<\infty$.
\end{condition}
That is, we require at least conditional Hölder continuity of the parametrization map $\Phi$. The $L^\infty(\mathcal{X})$ topology is not necessary for what follows and can be generalized to any $L^p$, $p\geq 1$ or $H^s$-norm, $s<\beta$. Similarly, we require conditional forward and inverse Hölder continuity of the forward map $\mathcal{G}$. 
\begin{condition}[On the forward map $\mathcal{G}$]\label{cond:forward-map}
    For any $\gamma_1,\gamma_2 \in \Phi(\mathcal{S}_{\beta}(M))$, let
    \begin{eqnarray}\nonumber
        \|\mathcal{G}(\gamma_1)-\mathcal{G}(\gamma_2)\| \leq C_{\mathcal{G}}\|\gamma_1-\gamma_2\|^\eta_{L^2(D)}
    \end{eqnarray}
    for some constants $C_{\mathcal{G}}(M)>0$ and $0<\eta<\infty$. In addition, let
    \begin{eqnarray}\nonumber
        \|\gamma_1-\gamma_2\|_{L^2(D)} \leq f(\|\mathcal{G}(\gamma_1)-\mathcal{G}(\gamma_2)\|),
    \end{eqnarray}
    for some increasing function $f:\mathbb R \to \mathbb R$, which is continuous at zero with $f(0)=0$. 
\end{condition}
We have the following condition on the prior distributions $\Pi$ we consider. They should be push-forward distributions of a scaled Gaussian prior distribution in $\Theta$. 
\begin{condition}[Prior $\Pi$]\label{cond:prior}
    Let $\Pi'_\theta$ be a centred Gaussian probability measure on $H^\beta(\mathcal{X})$, $\beta> d'/2$, with $\Pi'_\theta(H^{\beta}(\mathcal{X}))=1$. Let the reproducing kernel Hilbert space (RKHS), see \cite{ghosal2017}, $(\mathcal{H},\|\cdot\|_{\mathcal{H}})$ of $\Pi'_\theta$ be continuously embedded into $H^{\delta}(\mathcal{X})$ for some $\delta>\beta$. Then $\Pi_\theta$ is the distribution of
\begin{equation}\label{eq:scaled-prior}
    \theta  = n^{a-\frac{1}{2}} \theta', \quad \theta' \sim \Pi'_\theta
\end{equation}
for $a$ as in Condition \ref{cond:1}. Then let $\Pi=\Phi\Pi_\theta$.
\end{condition}
This gives the following structure
\begin{equation}\label{eq:structure-of-spaces}
    \mathcal{H}\subset H^\delta(\mathcal{X}) \subset H^{\beta}(\mathcal{X})=\Theta.
\end{equation}
If one chooses for example a Matérn covariance, see \cite{roininen2014}, such that $\Pi'_\theta(H^\beta(\mathcal{X}))=1$, then $\mathcal{H}=H^{\delta}(\mathcal{X})$ with $\delta=\beta+d'/2$, see Example 11.8 and Lemma 11.35 in \cite{ghosal2017} or \cite[Theorem 6.2.3]{nickl2022}. The scaling in \eref{eq:scaled-prior} essentially updates the weight of the prior term to go slower to zero. Indeed, dividing through by the factor $\varepsilon_n^{-2}$ appearing in the data-misfit term, the prior term scales as $\varepsilon_n^2 n^{1-2a} \sim r_n^2$. This term play the role of the `regularization parameter' in \cite{engl1996}. Note that $\lim_{n\rightarrow \infty} r_n= 0$ and $\lim_{n\rightarrow \infty} \varepsilon_n^2/r_n^2 = 0$, as is needed for the convergence of Tikhonov regularizers for example, see \cite[Theorem 5.2]{engl1996}. The scaling \eref{eq:scaled-prior} is also common in the consistency literature, see for example \cite{monard2021}. In our setting, it ensures that samples are with high probability in a totally bounded set $A_n$, as was called for in Condition \ref{cond:1.2} and \ref{cond:1.3}. We note for $\beta>d'/2$ that $\Pi_\theta'$ is also a Gaussian measure on the separable Banach space $C(\overline{\mathcal{X}})$ endowed with the usual supremum norm $\|f\|_{\infty}:=\sup_{x\in \overline{\mathcal{X}}}|f(x)| $. This is a consequence of a continuous Sobolev embedding and \cite[Exercise 3.39]{hairer2009}.\\

Under Condition \ref{cond:unifcont}, \ref{cond:forward-map} and \ref{cond:prior}, the lemmas in the subsequent sections ensure that Condition \ref{cond:1} is satisfied. Then we have the following theorem for posterior consistency at $\gamma_0\in \Phi(\mathcal{H})$ using the push-forward prior $\Pi = \Phi\Pi_\theta$ for $\Pi_\theta$ a Gaussian distribution satisfying Condition \ref{cond:prior}.
\begin{theorem}\label{thm:consist-in-Hbeta}
    Suppose Condition \ref{cond:unifcont}, \ref{cond:forward-map} and \ref{cond:prior} are satisfied for $\beta>d'/2$, and $\gamma_0\in \Phi(\mathcal{H})$. Let $\Pi(\cdot|Y)$ be the corresponding sequence of posterior distributions arising for the model \eref{eq:obs}. Then there exists $C_0>0$ such that
    \begin{eqnarray}\nonumber \Pi(\|\gamma-\gamma_0\|_{L^2(D)}\leq f(C_0 r_n)|Y)\rightarrow 1 \quad \textnormal{ in $P_n^{\gamma_0}$-probability},\end{eqnarray}
    where $r_n = n^{-a}$ with
    \begin{equation}\label{eq:contraction-rate-a}
        a= \frac{\eta \zeta \delta}{2\eta\zeta\delta+d'}.
    \end{equation}
    The rate of convergence in probability is $e^{-bnr_n^2}$ for any $b>0$ choosing $C_0>0$ large enough.
\end{theorem}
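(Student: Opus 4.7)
The plan is to verify the three parts of Condition \ref{cond:1} for the push-forward prior $\Pi=\Phi\Pi_\theta$ at rate $r_n=n^{-a}$, apply Theorem \ref{thm:contr}, and then translate the resulting forward-level contraction into $L^2(D)$-contraction using the inverse modulus $f$ in Condition \ref{cond:forward-map}. Writing $\gamma_0=\Phi(\theta_0)$ for some $\theta_0\in\mathcal{H}$ and unpacking the scaling $\theta=n^{a-1/2}\theta'$, I would introduce the sieve
\begin{eqnarray}\nonumber
    \mathcal{A}_n=n^{a-1/2}\bigl(M_n\mathcal{H}_1+\rho_n\mathcal{U}\bigr), \qquad A_n=\Phi(\mathcal{A}_n),
\end{eqnarray}
where $\mathcal{H}_1$ is the unit ball of $\mathcal{H}$, $\mathcal{U}$ the unit ball of $C(\overline{\mathcal{X}})$, and $M_n,\rho_n$ are to be calibrated in terms of $r_n$.

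For \ref{cond:1.1}, composing the Hölder bounds in Conditions \ref{cond:unifcont} and \ref{cond:forward-map} gives $d_\mathcal{G}(\Phi(\theta),\gamma_0)\leq C\|\theta-\theta_0\|_{L^\infty(\mathcal{X})}^{\eta\zeta}$ whenever both parameters lie in a common Sobolev ball, so
\begin{eqnarray}\nonumber
    \Pi(B_\mathcal{G}(\gamma_0,r_n))\geq \Pi_\theta\bigl(\|\theta-\theta_0\|_\infty\leq c\,r_n^{1/(\eta\zeta)}\bigr).
\end{eqnarray}
Borell's Cameron--Martin shift combined with the scaling then yields
\begin{eqnarray}\nonumber \fl
    \Pi_\theta\bigl(\|\theta-\theta_0\|_\infty\leq c\,r_n^{1/(\eta\zeta)}\bigr) \geq \exp\bigl(-\tfrac{1}{2}n^{1-2a}\|\theta_0\|_\mathcal{H}^2\bigr)\,\Pi'_\theta\bigl(\|\theta'\|_\infty\leq c\,n^{1/2-a}r_n^{1/(\eta\zeta)}\bigr).
\end{eqnarray}
The shift factor is already of the target form $e^{-Cnr_n^2}$ since $n^{1-2a}=nr_n^2$, and the centred small-ball factor is controlled by a Kuelbs--Li estimate for Gaussian measures on $C(\overline{\mathcal{X}})$ whose RKHS is continuously embedded in $H^\delta(\mathcal{X})$. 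Equating the resulting exponent with $nr_n^2$ picks out precisely $a=\eta\zeta\delta/(2\eta\zeta\delta+d')$, as stated in \eref{eq:contraction-rate-a}.

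For \ref{cond:1.2}, Borell's Gaussian isoperimetric inequality applied to $\Pi'_\theta$, together with the same small-ball scale, gives $\Pi'_\theta(\mathcal{A}_n^c)\leq \exp(-C_2 nr_n^2)$ provided $M_n$ is chosen of order $\sqrt{nr_n^2}$ and $\rho_n$ matches the small-ball scale from the previous step; since $A_n^c\subset \Phi(\mathcal{A}_n^c)^c$ pulls back into $\mathcal{A}_n^c$, pushing forward through $\Phi$ preserves the bound, and the constants can be adjusted so that $C_2>C_1+4$. For \ref{cond:1.3}, the same Hölder composition bounds $\log N(A_n,d_\mathcal{G},m_0 r_n)$ by $\log N(\mathcal{A}_n,\|\cdot\|_\infty, c\,r_n^{1/(\eta\zeta)})$; standard metric entropy estimates for the Sobolev unit ball $\mathcal{H}_1\subset H^\delta(\mathcal{X})$ in the sup norm, after rescaling by $n^{a-1/2}M_n$ and accounting for the $\rho_n\mathcal{U}$ perturbation, deliver the bound $C_3 nr_n^2$ with the same $a$.

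With Condition \ref{cond:1} in hand, Theorem \ref{thm:contr} gives $\Pi(B_\mathcal{G}(\gamma_0,C_0 r_n)\cap A_n | Y)\to 1$ in $P_n^{\gamma_0}$-probability at rate $e^{-bnr_n^2}$, and Condition \ref{cond:forward-map} upgrades this event to $\|\gamma-\gamma_0\|_{L^2(D)}\leq f(C_0 r_n)$. The main obstacle is the joint calibration of $(M_n,\rho_n,a)$: the Hölder exponents $\zeta,\eta$ couple through their product $\eta\zeta$ in the reduction from $d_\mathcal{G}$ to $\|\cdot\|_\infty$ in both \ref{cond:1.1} and \ref{cond:1.3}, and this product must be balanced against the Sobolev smoothness $\delta$ of the RKHS and the intrinsic dimension $d'$ of $\mathcal{X}$ to yield a single exponent $a$, recovering the classical $\delta/(2\delta+d')$ rate in the Lipschitz special case $\eta=\zeta=1$.
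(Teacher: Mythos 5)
Your proposal follows essentially the same route as the paper: verify \ref{cond:1.1}--\ref{cond:1.3} for the sieve $\Theta_n=\{\theta_1+\theta_2:\|\theta_1\|_{\infty}\leq M\bar r_n,\ \|\theta_2\|_{\mathcal H}\leq M\}\cap\mathcal S_\beta(M)$ (your $\mathcal A_n$ is this set rewritten in the unscaled variable $\theta'$, with $M_n\sim\sqrt{nr_n^2}$ and $\rho_n$ at the small-ball scale), apply Theorem \ref{thm:contr}, and upgrade to $L^2(D)$ via $f$, with the same balancing of $\eta\zeta$, $\delta$ and $d'$ producing \eref{eq:contraction-rate-a}. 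The one step you elide is that, since Conditions \ref{cond:unifcont} and \ref{cond:forward-map} are only conditional on $\mathcal S_\beta(M)$, the inclusion of the sup-norm ball $\{\|\theta-\theta_0\|_{\infty}\leq c\,r_n^{1/(\eta\zeta)}\}$ into $B_{\mathcal G}(\gamma_0,r_n)$ requires intersecting with a Sobolev norm ball, and lower-bounding the prior probability of that intersection by the product of the two probabilities is done in the paper via the Gaussian correlation inequality (Lemma \ref{lemma:small-ball}); your displayed small-ball bound silently drops this constraint, so that step needs to be restored for the argument to be complete.
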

\begin{proof}
    Note first that Lemma \ref{lemma:small-ball} shows that Condition \ref{cond:1.1} is satisfied for some $C_1=C_1(C_\Phi,C_\mathcal{G},\zeta,\eta,d',\delta,\theta_0,\Pi_\theta')$. Given $b>0$, Lemma \ref{lemma:excess-mass-2} states that we can choose $M>C(C_2,\Pi_\theta',\delta,d')$ such that Condition \ref{cond:1.2} is satisfied and $0<b<C_2-C_1-4$. For this choice of $M$, Lemma \ref{lemma:metric-entropy-reg-sets} gives $m_0=m_0(C_\Phi,C_\mathcal{G},\zeta,\eta,M)$ and $C_3=C_3(\delta,M,d',\mathcal{X})$ such that Condition \ref{cond:1.3} is satisfied. Then, by Theorem \ref{thm:contr}, there exists $C_0(C_2,C_3,m_0)$
    \begin{eqnarray}\nonumber
    \Pi(B_{\mathcal{G}}(\gamma_0,C_0r_n) \cap A_n | Y) \rightarrow 1 \quad \textnormal{ in $P_n^{\gamma_0}$-probability},
    \end{eqnarray}
    with rate $e^{-bnr_n^2}$ as $n \rightarrow \infty$. Then the wanted result is a consequence of \eref{eq:step-2-essence}.
\end{proof}
Posterior consistency with a rate as in the preceding theorem often leads to the convergence of related estimators with the same rate, see \cite{ghosal2017}. Here, we repeat an argument found in \cite{nickl2022} to conclude that the posterior mean converges in $P_n^{\gamma_0}$-probability to $\gamma_0$ as $n\rightarrow \infty$.
\begin{corollary}\label{corollary:post-mean}
    Under the assumptions of Theorem \ref{thm:consist-in-Hbeta}, the posterior mean $E[\gamma | Y]$ in $L^2_\Lambda(D)$ satisfies for some constant $C>0$ large enough
    \begin{eqnarray}\nonumber
        \|E[\gamma|Y]-\gamma_0\|_{L^2(D)}\rightarrow 0 \quad \textnormal{ in $P_n^{\gamma_0}$-probability}
    \end{eqnarray}
    with rate $f(Cr_n)$ as $n\rightarrow \infty$.
\end{corollary}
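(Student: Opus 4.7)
The plan is to reduce this to the contraction statement from Theorem \ref{thm:consist-in-Hbeta} by means of Jensen's inequality and the uniform bound $\|\gamma\|_{L^2(D)}\leq \Lambda\sqrt{\mathrm{vol}(D)}$ available on $L^2_\Lambda(D)$. First I would apply Jensen's inequality to the Bochner integral defining the posterior mean, which yields
\begin{eqnarray}\nonumber
\|E[\gamma|Y]-\gamma_0\|_{L^2(D)} \leq \int_{L^2_\Lambda(D)} \|\gamma-\gamma_0\|_{L^2(D)} \, \Pi(d\gamma|Y).
\end{eqnarray}
This step is justified because $\gamma\mapsto\gamma-\gamma_0$ is Bochner integrable under $\Pi(\cdot|Y)$ (by the uniform bound), so that the norm passes inside the integral.

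Next I would introduce the contraction ball $\tilde{B}_n := \{\gamma \in L^2_\Lambda(D): \|\gamma-\gamma_0\|_{L^2(D)}\leq f(C_0 r_n)\}$ from Theorem \ref{thm:consist-in-Hbeta} and split the integral as
\begin{eqnarray}\nonumber
\int_{\tilde{B}_n}\|\gamma-\gamma_0\|_{L^2(D)} \,\Pi(d\gamma|Y) + \int_{\tilde{B}_n^c}\|\gamma-\gamma_0\|_{L^2(D)} \,\Pi(d\gamma|Y).
\end{eqnarray}
The first integral is trivially bounded by $f(C_0 r_n)\,\Pi(\tilde{B}_n|Y)\leq f(C_0 r_n)$. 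For the second, using $\|\gamma-\gamma_0\|_{L^2(D)}\leq 2\Lambda\sqrt{\mathrm{vol}(D)}$ almost surely under the prior (and hence the posterior, which is absolutely continuous with respect to the prior), I bound it by $2\Lambda\sqrt{\mathrm{vol}(D)}\,\Pi(\tilde{B}_n^c|Y)$.

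Then I would invoke Theorem \ref{thm:consist-in-Hbeta}, which guarantees that $\Pi(\tilde{B}_n^c|Y)$ converges to $0$ in $P_n^{\gamma_0}$-probability with rate $e^{-bnr_n^2}$ for any $b>0$ (upon enlarging $C_0$). Unpacking the definition in \eref{eq:convergenceinprob}, this means the event $\{y : \Pi(\tilde{B}_n^c|y)\leq K e^{-bnr_n^2}\}$ has probability going to one. Since $r_n=n^{-a}$ with $a<1/2$, the sequence $nr_n^2 = n^{1-2a}\to\infty$, so $e^{-bnr_n^2}$ decays faster than any polynomial in $n^{-1}$, and in particular faster than $f(C_0 r_n)$ (which is strictly positive for each $n$ and tends to $0$). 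Consequently, on this high-probability event and for $n$ large enough, the sum of the two integral bounds is dominated by $2f(C_0 r_n)$, yielding
\begin{eqnarray}\nonumber
\|E[\gamma|Y]-\gamma_0\|_{L^2(D)}\leq C f(C_0 r_n)
\end{eqnarray}
with $P_n^{\gamma_0}$-probability tending to one, which is the claimed convergence in probability at rate $f(Cr_n)$.

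The argument is essentially routine once Theorem \ref{thm:consist-in-Hbeta} is in hand; the only mild subtlety I anticipate is verifying that $e^{-bnr_n^2}$ indeed beats $f(C_0 r_n)$ for every admissible modulus $f$. This follows from the fact that $f$ is increasing and continuous at zero (so $f(C_0 r_n)>0$), combined with the super-polynomial decay of $e^{-bnr_n^2}$, so no finer control on $f$ is required.
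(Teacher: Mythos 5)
Your argument is correct and is, in substance, the same argument the paper invokes: the paper's proof simply verifies the moment hypothesis of \cite[Theorem 2.3.2]{nickl2022} (namely $\int\|\gamma-\gamma_0\|_{L^2(D)}^2\,\Pi(d\gamma)\leq 4\Lambda^2|D|$, which holds because $\Phi$ maps into $L^2_\Lambda(D)$) and defers the rest to that theorem, whose proof is exactly your Jensen-plus-splitting scheme. The one place you genuinely deviate is the complement term: Nickl's theorem handles it via Cauchy--Schwarz together with a bound on the posterior second moment obtained from the evidence lower bound (i.e.\ the small-ball Condition \ref{cond:1.1}), whereas you exploit the almost-sure uniform bound $\|\gamma-\gamma_0\|_{L^2(D)}\leq 2\Lambda\sqrt{\mathrm{vol}(D)}$ to bound it directly by $2\Lambda\sqrt{\mathrm{vol}(D)}\,\Pi(\tilde{B}_n^c|Y)$. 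This is a legitimate simplification, available precisely because the prior is supported in the bounded set $L^2_\Lambda(D)$ --- the same structural fact the paper uses to check the moment condition --- and it avoids invoking the evidence lower bound a second time.

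One caveat on your closing remark: the claim that $e^{-bnr_n^2}$ is eventually dominated by $f(C_0 r_n)$ does \emph{not} follow merely from $f$ being increasing and continuous at zero with $f(0)=0$. Such an $f$ can vanish on a neighbourhood of $0$, or decay super-exponentially (e.g.\ $f(x)=e^{-1/x}$), in which case the complement term $Ke^{-bnr_n^2}$ need not be $O(f(C_0r_n))$ and the stated rate would fail. For the moduli actually used in the paper ($f(x)=x$ and $f(x)=Cx^\nu$), $f(C_0r_n)$ decays only polynomially in $n$ while $nr_n^2=n^{1-2a}\to\infty$, so your comparison is valid there; the corollary as stated (and the paper's appeal to Nickl's theorem) carries the same implicit restriction on $f$, so this is a gap in the statement's generality rather than in your argument specifically, but your justification of the step should be tightened accordingly.
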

\begin{proof}
    The proof of Theorem 2.3.2 in \cite{nickl2022} applies here, since $\Phi$ maps into $L^2_\Lambda(D)$ by assumption and hence
    \begin{eqnarray}\nonumber \fl
    \int_{L^2_\Lambda(D)} \|\gamma-\gamma_0\|_{L^2(D)}^2\, \Pi(d\gamma) = \int_{\Theta}  \|\Phi(\theta)-\Phi(\theta_0)\|_{L^2(D)}^2 \, \Pi_\theta(d\theta)\leq 4\Lambda^2 |D|.
    \end{eqnarray}
\end{proof}
 
 
\subsection{Excess mass condition \ref{cond:1.2}}
To motivate more precisely the scaling of the prior and the form of $A_n$, we recall  
 \cite[Lemma 5.17]{monard2021}: 
\begin{eqnarray}\label{eq:fernique} \nonumber
	\Pi'_\theta(\|\theta'\|_{H^\beta(\mathcal{X})}> M)\leq e^{-CM^2},
\end{eqnarray}
for all $M$ large enough and some fixed $C>0$ depending on $\Pi'_\theta$. Then 
\begin{equation}\label{eq:prior-scaling-fernique}\fl
    \Pi_\theta(\|\theta\|_{H^\beta(\mathcal{X})} > M) = \Pi'_\theta(\|\theta'\|_{H^\beta(\mathcal{X})}>  Mn^{1/2-a}) \leq e^{-CM^2n^{1-2a}} = e^{-CM^2nr_n^2}.
\end{equation}
Hence, $\Pi_\theta$ charges $\mathcal{S}_\beta(M)$ with sufficient mass in relation to Condition \ref{cond:1.2}. However, we can consider a smaller set with the same property.
Define
\begin{eqnarray}\label{eq:reg-sets} \fl
    A_n:=\Phi(\Theta_n), \qquad \Theta_n := \{\theta = \theta_1+\theta_2: \|\theta_1\|_{\infty}\leq M\bar{r}_n, \|\theta_2\|_{\mathcal{H}}\leq M\} \cap \mathcal{S}_\beta(M),
\end{eqnarray}
for $\bar{r}_n:=r_n^{\frac{1}{\eta\zeta}}$.
\begin{lemma}\label{lemma:excess-mass-2}
    If Condition \ref{cond:prior} is satisfied and $r_n=n^{-a}$ for 
    \begin{equation}\label{eq:a-ineq-ws}
        a = \frac{\eta \zeta \delta}{2\eta\zeta\delta+d'},
    \end{equation}
    then condition \ref{cond:1.2} is satisfied for $A_n$ defined by \eref{eq:reg-sets}.
\end{lemma}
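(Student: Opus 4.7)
The plan is to bound $\Pi(L^2_\Lambda(D)\setminus A_n)$ by reducing to a Gaussian concentration inequality on $\Theta$ and then invoking Borell's isoperimetric inequality together with a standard Gaussian small-ball estimate. Since $\Pi = \Phi\Pi_\theta$ and $A_n = \Phi(\Theta_n)$, the definition of push-forward measure immediately gives $\Pi(L^2_\Lambda(D)\setminus A_n) \leq \Pi_\theta(\Theta \setminus \Theta_n)$. A union bound over the two defining constraints of $\Theta_n$ reduces matters to bounding separately
\begin{eqnarray}\nonumber
\Pi_\theta(\|\theta\|_{H^\beta(\mathcal X)} > M) \quad \textnormal{and} \quad \Pi_\theta\left(\theta \notin M\bar r_n \,\mathcal B_1 + M\, \mathcal H_1\right),
\end{eqnarray}
where $\mathcal B_1$ denotes the unit ball of $C(\overline{\mathcal X})$. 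The first probability is at most $e^{-CM^2 nr_n^2}$ by the scaled Fernique estimate \eref{eq:prior-scaling-fernique}; this is $\leq \tfrac12 e^{-C_2 nr_n^2}$ once $M$ is chosen large enough, using that $nr_n^2 = n^{1-2a}\to\infty$ because $a<1/2$.

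For the second probability I would undo the scaling via $\theta = n^{a-1/2}\theta'$ with $\theta'\sim\Pi'_\theta$, rewriting the event as $\theta'\notin\epsilon_n\mathcal B_1 + M_n\mathcal H_1$, where $M_n:=Mn^{1/2-a}$ and $\epsilon_n:=M_n\bar r_n$. Borell's isoperimetric inequality for centred Gaussian measures on a separable Banach space (see e.g.\ \cite[Theorem 2.6.12]{gine2016}), applied to $\Pi'_\theta$ on $C(\overline{\mathcal X})$, then bounds this probability by $1 - \Psi(\Psi^{-1}(\varphi(\epsilon_n)) + M_n)$, where $\Psi$ is the standard normal cdf and $\varphi(\epsilon):=\Pi'_\theta(\|\theta'\|_\infty\leq\epsilon)$ is the small-ball probability.

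To make this bound useful, I would invoke a standard Gaussian small-ball estimate $-\log\varphi(\epsilon)\leq C\epsilon^{-d'/\delta}$, which is available since the RKHS $\mathcal H$ is continuously embedded in $H^\delta(\mathcal X)$ with $\delta > d'/2$, and $\mathcal X$ is either the torus or a bounded Lipschitz domain (see e.g.\ \cite[Section 11.4]{ghosal2017} or \cite[Theorem 6.2.3]{nickl2022}). This yields $|\Psi^{-1}(\varphi(\epsilon_n))|\leq C'\epsilon_n^{-d'/(2\delta)}$ by the standard Gaussian tail estimate. The key algebraic point is that the specific choice $a = \eta\zeta\delta/(2\eta\zeta\delta+d')$, together with $\bar r_n = r_n^{1/(\eta\zeta)}$, is precisely tuned so that $\epsilon_n^{-d'/\delta}$ and $M_n^2 = M^2 nr_n^2$ have the same polynomial order in $n$; consequently, taking $M$ large enough forces $|\Psi^{-1}(\varphi(\epsilon_n))|\leq \tfrac12 M_n$ for all $n$ sufficiently large. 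Combining this with the Gaussian tail bound $1-\Psi(x)\leq e^{-x^2/2}$ produces an upper bound of the form $e^{-M^2nr_n^2/8}$ for the second probability, which can be made $\leq \tfrac12 e^{-C_2 nr_n^2}$ by enlarging $M$ once more.

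The hard part is the algebraic balance in the last step: one has to verify that with $a=\eta\zeta\delta/(2\eta\zeta\delta+d')$ and $\bar r_n=r_n^{1/(\eta\zeta)}$ the exponent of $n$ appearing in $\epsilon_n^{-d'/\delta}$ does not exceed $1-2a$, which is exactly what the stated formula for $a$ achieves. A secondary technical point is locating the small-ball estimate in the precise form above for centred Gaussian measures on $C(\overline{\mathcal X})$ with RKHS inside $H^\delta(\mathcal X)$; this is classical but requires checking the embedding hypothesis carefully on either the torus or the bounded Lipschitz domain.
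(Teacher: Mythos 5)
Your overall strategy is the same as the paper's: the paper's proof simply delegates the bound $\Pi_\theta(\Theta\setminus\Theta_n)\leq e^{-C_2nr_n^2}$ to \cite[Theorem 2.2.2 and Exercise 2.4.4]{nickl2022}, and what you have written out --- push-forward reduction to $\Pi_\theta(\Theta\setminus\Theta_n)$, a union bound splitting off the $\mathcal{S}_\beta(M)$ constraint (handled by the scaled Fernique bound \eref{eq:prior-scaling-fernique}), and Borell's isoperimetric inequality plus a small-ball estimate for the remaining constraint --- is precisely the proof of that cited theorem. So the decomposition and all the probabilistic ingredients match.

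There is, however, one concrete error: the small-ball estimate you invoke, $-\log\varphi(\epsilon)\leq C\epsilon^{-d'/\delta}$, is not the one available under Condition \ref{cond:prior}. The exponent $d'/\delta$ is the \emph{metric entropy} exponent of the RKHS unit ball in sup-norm (Lemma \ref{lemma:covering-lipschitz}); the small-ball exponent it yields via the Kuelbs--Li/Li--Linde duality \cite{li1999} is $b=2d'/(2\delta-d')$, which is strictly larger. Consequently your ``key algebraic point'' is also off: with $\epsilon_n=M\bar r_n n^{1/2-a}$ and $a$ as in \eref{eq:a-ineq-ws}, one checks that $\epsilon_n^{-d'/\delta}=(nr_n^2)^{(2\delta-d')/(2\delta)}=o(nr_n^2)$, so the two quantities do \emph{not} have the same polynomial order; the exact balance $(\bar r_n n^{1/2-a})^{-b}=nr_n^2$ holds for $b=2d'/(2\delta-d')$, which is exactly the identity recorded in the paper's proof. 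The error is not fatal --- substituting the correct exponent $2d'/(2\delta-d')$ into your Borell/Gaussian-tail chain gives $|\Psi^{-1}(\varphi(\epsilon_n))|\leq C'M^{-b/2}\sqrt{nr_n^2}=C'M^{-b/2}M_n/M\leq M_n/2$ for $M$ large, and the bound $e^{-M_n^2/8}\leq \tfrac12 e^{-C_2nr_n^2}$ follows as you intended --- but as written the proof rests on a small-ball bound that does not follow from the stated hypotheses, so the exponent must be corrected.
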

\begin{proof}
    \cite[Theorem 2.2.2 and exercise 2.4.4]{nickl2022}
    shows that for $M>C(C_2,\Pi'_\theta,\delta,d')$ 
    \begin{eqnarray}\nonumber \Pi_\theta(\Theta\setminus\Theta_n)\leq e^{-C_2 nr_n^2},\end{eqnarray}
    for any given $C_2>0$, since $(\bar{r}_n n^{1/2-a})^{-b}=nr_n^2$ for $b=2d'/(2\delta-d')$. Then,
    \begin{eqnarray}\nonumber 
    \Pi(L^2_\Lambda(D)\setminus A_n) &= \Pi_\theta(\Phi^{-1}(L^2_\Lambda(D)\setminus A_n)),\\ 
    &= \Pi_\theta(\Theta \setminus \Theta_n)\leq e^{-C_2nr_n^2} \label{eq:middle-eq-prior},
\end{eqnarray}
as follows from \eref{eq:prior-scaling-fernique}.
\end{proof}
  
\subsection{Metric entropy condition \ref{cond:1.3}}\label{sec:metric-entropy}
Now we show that the sets on the form $A_n$ defined by \eref{eq:reg-sets} satisfy Condition \ref{cond:1.3}. This is straight-forward, when $\Phi$ is Hölder continuous by Lemma \ref{lemma:covering-numbers}. We also recall that an upper bound on the covering number of Sobolev norm balls is well-known, see Lemma \ref{lemma:covering-lipschitz}.
\begin{lemma}\label{lemma:metric-entropy-reg-sets}
    Suppose Condition \ref{cond:unifcont} and \ref{cond:forward-map} are satisfied. Then Condition \ref{cond:1.3} is satisfied for $A_n$ as in \eref{eq:reg-sets} and $a$ as in \eref{eq:a-ineq-ws}.
\end{lemma}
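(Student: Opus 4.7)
The plan is to reduce the $d_{\mathcal{G}}$-covering of $A_n = \Phi(\Theta_n)$ to an $L^\infty(\mathcal{X})$-covering of $\Theta_n$ by exploiting Hölder continuity of $\mathcal{G} \circ \Phi$, and then to bound the latter using the additive decomposition built into $\Theta_n$ together with a standard Sobolev metric entropy estimate.

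First, I would observe that composing Conditions \ref{cond:unifcont} and \ref{cond:forward-map} makes $\mathcal{G}\circ\Phi$ Hölder continuous from $(\mathcal{S}_\beta(M),\|\cdot\|_\infty)$ into $(\mathcal{Y},\|\cdot\|)$ with exponent $\eta\zeta$ and constant $C_{\mathcal{G}}C_{\Phi}^{\eta}$. By Lemma \ref{lemma:covering-numbers} (transformation of covering numbers under Hölder maps), any $L^\infty$-covering of $\Theta_n$ at radius $\rho$ induces a $d_{\mathcal{G}}$-covering of $A_n = \Phi(\Theta_n)$ at radius $C_{\mathcal{G}}C_{\Phi}^{\eta}\rho^{\eta\zeta}$. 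Picking $\rho \sim \bar{r}_n = r_n^{1/(\eta\zeta)}$ produces a $d_{\mathcal{G}}$-cover of $A_n$ at radius proportional to $r_n$, which I absorb into a constant $m_0>0$. It thus suffices to bound $\log N(\Theta_n,\|\cdot\|_\infty,c\bar{r}_n)$ by a multiple of $nr_n^2$ for some $c>0$.

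Next, I would exploit the defining inclusion $\Theta_n \subset M\bar{r}_n B_{L^\infty(\mathcal{X})} + M B_{\mathcal{H}}$ from \eref{eq:reg-sets} together with subadditivity of covering numbers under Minkowski sums. The summand $M\bar{r}_n B_{L^\infty(\mathcal{X})}$ is covered by a single ball of radius $M\bar{r}_n$, so only the second summand contributes nontrivially. Using the continuous embedding $\mathcal{H}\hookrightarrow H^\delta(\mathcal{X})$ from Condition \ref{cond:prior}, the set $MB_{\mathcal{H}}$ sits inside a Sobolev ball of radius $CM$, and Lemma \ref{lemma:covering-lipschitz} for the $L^\infty$-covering of Sobolev balls then yields $\log N(MB_{\mathcal{H}},\|\cdot\|_\infty,\bar{r}_n) \lesssim \bar{r}_n^{-d'/\delta}$, with constants depending on $M,\delta,d',\mathcal{X}$.

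Finally, the rate check is a direct algebraic verification: with $\bar{r}_n = n^{-a/(\eta\zeta)}$ and the defining relation $a=\eta\zeta\delta/(2\eta\zeta\delta+d')$, one obtains $ad'/(\eta\zeta\delta)=d'/(2\eta\zeta\delta+d')=1-2a$, so $\bar{r}_n^{-d'/\delta}=n^{1-2a}=nr_n^2$. Chaining the bounds gives $\log N(A_n,d_{\mathcal{G}},m_0 r_n) \leq C_3 nr_n^2$, which is Condition \ref{cond:1.3}. The structural steps are routine; the main potential hurdle is to confirm that Lemma \ref{lemma:covering-lipschitz} indeed delivers the sharp Sobolev entropy exponent $d'/\delta$ in the $L^\infty$-norm (and not the weaker $d'/(\delta-d'/2)$ coming from Sobolev–Hölder embedding), since this sharp exponent is precisely what makes the contraction rate in \eref{eq:a-ineq-ws} come out exactly.
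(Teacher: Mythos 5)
Your proposal is correct and follows essentially the same route as the paper's proof: the same reduction of the $d_{\mathcal{G}}$-covering of $A_n=\Phi(\Theta_n)$ to an $L^\infty$-covering of $\Theta_n$ via the Hölder continuity of $\mathcal{G}\circ\Phi$ (Lemma \ref{lemma:covering-numbers}~(i)), the same absorption of the small $L^\infty$-ball in the Minkowski sum (Lemma \ref{lemma:triangle-inequality-argument}), the same use of the embedding $\mathcal{H}\hookrightarrow H^\delta(\mathcal{X})$ with the Sobolev entropy bound of Lemma \ref{lemma:covering-lipschitz}, and the same algebra identifying $\bar{r}_n^{-d'/\delta}=nr_n^2$. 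Your closing concern is settled by Lemma \ref{lemma:covering-lipschitz} itself, which gives the sharp exponent $d'/\delta$ in the supremum norm, exactly as your argument requires.
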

\begin{proof}
Define for $\theta'\in C(\overline{\mathcal{X}})$ and $\rho>0$ the norm ball $B_\infty(\theta',\rho):=\{\theta \in C(\overline{\mathcal{X}}): \|\theta-\theta'\|_{\infty}\leq \rho\}$ and denote by $B_\infty(\rho)$ the ball centered in $\theta'=0$. Recall \eref{eq:reg-sets}, for which we note 
$\Theta_n \subset (B_\infty(M\bar{r}_n) + \mathcal{S}_\delta(CM)) \cap \mathcal{S}_\beta(M)$ for some constant $C>0$ by Condition \ref{cond:prior}. Then applying Lemma \ref{lemma:triangle-inequality-argument} for $\rho = \overline{r}_n$
\begin{eqnarray}\nonumber N(\Theta_n, \|\cdot\|_{\infty}, 2M\overline{r}_n)\leq N(\mathcal{S}_\delta(CM), \|\cdot\|_{\infty}, M\overline{r}_n),\end{eqnarray}
Now using Lemma \ref{lemma:covering-numbers} (i) and the Hölder continuity of $\mathcal{G}\circ \Phi$ on $\mathcal{S}_\delta(M)$, there exists a constant $m_0=m_0(\eta,\zeta,C_\Phi,C_\mathcal{G},M)$ such that for any $n>0$ large enough,
\begin{eqnarray}\nonumber
\log N(A_n,d_{\mathcal{G}},m_0r_n) &\leq \log N(\Theta_n,\|\cdot\|_{\infty},2M\bar{r}_n),\\ \nonumber
 &\leq \log N(\mathcal{S}_\delta(CM),\|\cdot\|_{\infty},M\bar{r}_n),\\
 &\leq C_3\bar{r}_n^{-\frac{d'}{\delta}} = C_3nr_n^2, \label{eq:condition-on-a-covering}
 \end{eqnarray}
 where $C_3=C_3(\delta,M,d',C,\mathcal{X})$ and where we used Lemma \ref{lemma:covering-lipschitz} and \eref{eq:a-ineq-ws}.
\end{proof}

\subsection{Small ball condition  \ref{cond:1.1}}\label{sec:gaussprior}
In this section, we consider the strong assumption that $\gamma_0\in \Phi(\mathcal{H})$. We refer the reader to \cite{vaart2009} for a more general case where $\theta_0$ is only in the closure of $\mathcal{H}$ in $\Theta$. However, this extension is not immediately compatible with the scaling \eref{eq:scaled-prior}. What follows in this section is based on the work \cite{nickl2022}. We extend this to the case of Hölder continuous maps $\mathcal{G}\circ \Phi$ in a straight-forward manner. 
Below we need the scaled RKHS $\mathcal{H}_{n}:=n^{a-1/2} \mathcal{H} = \{n^{a-1/2} h: h\in \mathcal{H}\}$, see Condition \ref{cond:prior}, with norm
\begin{eqnarray}\nonumber\|h\|_{\mathcal{H}_{n}} = n^{1/2-a}\|h\|_{\mathcal{H}}.\end{eqnarray}
This is the RKHS associated with $\Pi_\theta$, see \cite{hairer2009} or \cite[Exercise 2.6.5]{gine2016}.

\begin{lemma}\label{lemma:small-ball}
Let $\Pi$ satisfy Condition \ref{cond:prior} and let $\gamma_0=\Phi(\theta_0)$ for some $\theta_0\in \mathcal{H}$. If Condition \ref{cond:unifcont} and \ref{cond:forward-map} are satisfied, then Condition \ref{cond:1.1} is satisfied for $a$ as in \eref{eq:a-ineq-ws}.
\end{lemma}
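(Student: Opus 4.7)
The plan is to leverage the Gaussian structure of $\Pi_\theta$ together with the Hölder regularity hypotheses, transferring a classical small-ball estimate from the latent space $\Theta$ to the forward pseudometric $d_{\mathcal{G}}$. First, using $\Pi = \Phi \Pi_\theta$ and the change-of-variables formula, I rewrite
\begin{eqnarray}\nonumber
    \Pi(B_{\mathcal{G}}(\gamma_0, r_n)) = \Pi_\theta\bigl(\theta: \|\mathcal{G}(\Phi(\theta)) - \mathcal{G}(\Phi(\theta_0))\| \leq r_n\bigr).
\end{eqnarray}
Fixing $M > \|\theta_0\|_{H^\beta(\mathcal{X})}$, which is possible since $\theta_0 \in \mathcal{H} \hookrightarrow H^\beta(\mathcal{X})$, Conditions \ref{cond:unifcont} and \ref{cond:forward-map} chain together on $\mathcal{S}_\beta(M)$ to give $\|\mathcal{G}(\Phi(\theta)) - \mathcal{G}(\Phi(\theta_0))\| \leq C_\mathcal{G} C_\Phi^\eta \|\theta - \theta_0\|_\infty^{\eta\zeta}$. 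Hence, for a suitable constant $c>0$,
\begin{eqnarray}\nonumber
    \{\theta \in \mathcal{S}_\beta(M): \|\theta - \theta_0\|_\infty \leq c \bar{r}_n\} \subset \{\theta: \|\mathcal{G}(\Phi(\theta)) - \mathcal{G}(\Phi(\theta_0))\| \leq r_n\},
\end{eqnarray}
and using \eref{eq:prior-scaling-fernique} to discard $\Theta \setminus \mathcal{S}_\beta(M)$ at negligible cost (its mass is bounded by $e^{-CM^2 nr_n^2}$), it suffices to lower bound the shifted small ball $\Pi_\theta(\|\theta - \theta_0\|_\infty \leq c\bar{r}_n)$ by $e^{-C_1 n r_n^2}$.

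Next, I would invoke the standard Cameron--Martin decentering inequality for centered Gaussian measures: since $\theta_0 \in \mathcal{H} = \mathcal{H}_n$ as sets,
\begin{eqnarray}\nonumber
    \Pi_\theta(\|\theta - \theta_0\|_\infty \leq c\bar{r}_n) \geq e^{-\|\theta_0\|_{\mathcal{H}_n}^2/2}\, \Pi_\theta(\|\theta\|_\infty \leq c\bar{r}_n).
\end{eqnarray}
The shift factor is already of the right order, since $\|\theta_0\|_{\mathcal{H}_n}^2 = n^{1-2a}\|\theta_0\|_{\mathcal{H}}^2 = nr_n^2 \|\theta_0\|_{\mathcal{H}}^2$, contributing a constant $\tfrac12 \|\theta_0\|_{\mathcal{H}}^2$ to $C_1$. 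What remains is a lower bound on the centered small ball. By the scaling \eref{eq:scaled-prior},
\begin{eqnarray}\nonumber
    \Pi_\theta(\|\theta\|_\infty \leq c\bar{r}_n) = \Pi'_\theta(\|\theta'\|_\infty \leq c\bar{r}_n n^{1/2-a}),
\end{eqnarray}
and for a centered Gaussian measure on $C(\overline{\mathcal{X}})$ whose RKHS is continuously embedded into $H^\delta(\mathcal{X})$, the classical small-ball estimate used in the proof of \cite[Theorem 2.2.2]{nickl2022} yields $-\log \Pi'_\theta(\|\theta'\|_\infty \leq \epsilon) \lesssim \epsilon^{-b}$ with $b = 2d'/(2\delta - d')$ for $\epsilon$ small. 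Setting $\epsilon = c\bar{r}_n n^{1/2-a}$, the calibration $(\bar{r}_n n^{1/2-a})^{-b} = n r_n^2$ that underlies the choice of $a$ in \eref{eq:a-ineq-ws} (already verified in the proof of Lemma \ref{lemma:excess-mass-2}) gives $-\log \Pi_\theta(\|\theta\|_\infty \leq c\bar{r}_n) \lesssim n r_n^2$, as needed.

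Combining the three pieces yields Condition \ref{cond:1.1} with $C_1$ depending on $\|\theta_0\|_{\mathcal{H}}$, $C_\Phi$, $C_{\mathcal{G}}$, $\zeta$, $\eta$, $d'$, $\delta$, and $\Pi'_\theta$. The main technical hurdle is the centered small-ball estimate: one must import it from the Gaussian process literature (e.g.\ via entropy/small-ball duality, or the explicit Matérn structure invoked after \eref{eq:structure-of-spaces}) and check that its exponent in $\epsilon$ is exactly $b = 2d'/(2\delta - d')$, so that the calibration of $a$ in \eref{eq:a-ineq-ws} closes the chain of inequalities. Everything else is bookkeeping that neatly mirrors the scaling argument already used in Lemma \ref{lemma:excess-mass-2}.
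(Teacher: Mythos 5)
Your overall route coincides with the paper's: transfer the $d_{\mathcal{G}}$-ball to a sup-norm ball in $\Theta$ via Conditions \ref{cond:unifcont} and \ref{cond:forward-map}, decenter with the Cameron--Martin inequality (the shift costing $e^{-\|\theta_0\|_{\mathcal{H}_n}^2}\sim e^{-Cnr_n^2}$), and close with the Li--Linde entropy/small-ball duality giving exponent $b=2d'/(2\delta-d')$, which is exactly what \cite[Theorem 1.2]{li1999} delivers under the embedding $\mathcal{H}\subset H^\delta(\mathcal{X})$. That part is sound and matches the paper.

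The gap is in the step where you ``discard $\Theta\setminus\mathcal{S}_\beta(M)$ at negligible cost.'' The Hölder bounds only hold on $\mathcal{S}_\beta(M)$, so what you must lower-bound is $\Pi_\theta\bigl(\{\|\theta-\theta_0\|_\infty\leq c\bar r_n\}\cap\mathcal{S}_\beta(M)\bigr)$, and your implicit estimate is the subtraction
\begin{eqnarray}\nonumber
\Pi_\theta(A\cap\mathcal{S}_\beta(M))\;\geq\;\Pi_\theta(A)-\Pi_\theta(\Theta\setminus\mathcal{S}_\beta(M))\;\geq\; e^{-C_1(M)nr_n^2}-e^{-CM^2nr_n^2}.
\end{eqnarray}
For the right-hand side to remain of order $e^{-C_1 nr_n^2}$ you need $CM^2>C_1(M)$. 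But $C_1(M)$ is not free of $M$: the radius $c=c(C_\Phi(M),C_\mathcal{G}(M),\eta,\zeta)$ shrinks as $M$ grows (the Conditions allow $C_\Phi(M),C_\mathcal{G}(M)$ to grow arbitrarily), and the small-ball exponent scales like $c^{-b}$, so nothing in the hypotheses guarantees that some admissible $M>\|\theta_0\|_{H^\beta(\mathcal{X})}$ satisfies $CM^2>C_1(M)$. The paper avoids this circularity entirely by keeping the constraint \emph{multiplicative}: after recentering, the Gaussian correlation inequality \cite[Theorem 6.2.2]{nickl2022} gives $\tilde\Pi_\theta(\|\theta\|_\infty\leq c\bar r_n,\ \|\theta\|_{H^\beta}\leq\tilde R)\geq \tilde\Pi_\theta(\|\theta\|_\infty\leq c\bar r_n)\,\tilde\Pi_\theta(\|\theta\|_{H^\beta}\leq\tilde R)$, and the second factor is $\geq 1/2$ for a fixed $R$ large, so it only perturbs $C_1$ by an additive constant no matter how large $C_1$ is. Replace your subtraction by this correlation-inequality step (both sets involved are symmetric and convex after recentering, so it applies) and the rest of your argument goes through as written.
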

\begin{proof}
	For $R>0$ large enough depending on $\theta_0$ and $\Pi_\theta'$, we have by Condition \ref{cond:unifcont} and \ref{cond:forward-map},
		\begin{eqnarray}\fl \nonumber
		\{\theta \in \Theta: d_{\mathcal{G}}(\Phi(\theta),&\Phi(\theta_0))\leq r_{n}\}\\\nonumber
			&\supset \{\theta\in \Theta: d_{\mathcal{G}}(\Phi(\theta),\Phi(\theta_0))\leq r_{n}\} \cap \mathcal{S}_{\beta}(R),\\\nonumber
			&\supset \{\theta\in \Theta: \|\Phi(\theta)-\Phi(\theta_0)\|_{L^2(D)}\leq Cr_{n}^{1/\eta}, \|\theta\|_{H^{\beta}(\mathcal{X})}\leq R\} \\
			&\supset \{\theta\in \Theta: \|\theta-\theta_0\|_{\infty} \leq C\bar{r}_{n}, \|\theta-\theta_0\|_{H^{\beta}(\mathcal{X})}\leq \tilde R\}, \label{eq:rhs-small-ball}
		\end{eqnarray}
		where $C=C(\eta,\zeta,C_{\mathcal{G}},C_{\Phi},R)$ and $\tilde{R}=R-\|\theta_0\|_{H^\beta(\mathcal{X})}$, and where we used the triangle inequality. Note also $\tilde{\Pi}_\theta(\cdot) = \Pi_\theta(\cdot+\theta_0)$ is a Gaussian measure in the separable Hilbert space $H^\beta(\mathcal{X})$. In addition, a closed norm ball in $H^\beta(\mathcal{X})$ is a closed subset of $H^\beta(\mathcal{X})$ and so is $\{\theta\in H^\beta(\mathcal{X}): \|\theta\|_{\infty}\leq C\overline{r}_n\}$ by a Sobolev embedding.
  Then we can apply the Gaussian correlation inequality \cite[Theorem 6.2.2]{nickl2022} to \eref{eq:rhs-small-ball} so that
  \begin{eqnarray}\fl \nonumber
      \Pi_\theta(d_{\mathcal{G}}(\Phi(\theta),\Phi(\theta_0))\leq r_n) &\geq
      \Pi_\theta(\|\theta-\theta_0\|_\infty \leq C\overline{r}_n, \|\theta-\theta_0\|_{H^\beta(\mathcal{X})}\leq \tilde{R}),\\\nonumber
      &= \tilde{\Pi}_\theta(\|\theta\|_\infty \leq C\overline{r}_n, \|\theta\|_{H^\beta(\mathcal{X})}\leq \tilde{R}),\\
      &\geq \tilde{\Pi}_\theta(\|\theta\|_{\infty}\leq C\bar{r}_{n})\tilde{\Pi}_\theta(\|\theta\|_{H^\beta(\mathcal{X})}\leq \tilde{R}).\label{eq:gaussian-correlation}
  \end{eqnarray}
  To each of the factors in the right-hand side of \eref{eq:gaussian-correlation} we apply \cite[Corrollary 2.6.18]{gine2016} to the effect that for large $n$
		\begin{eqnarray}\nonumber
			\Pi_\theta(d_{\mathcal{G}}(\Phi(\theta),&\Phi(\theta_0))\leq r_{n})\\ \nonumber
			&\geq e^{-\|\theta_0\|_{\mathcal{H}_{n}}^2}\Pi_\theta(\|\theta\|_{\infty} \leq C\bar{r}_{n})\Pi_\theta(\|\theta\|_{H^\beta(\mathcal{X})}\leq \tilde R),\\
			&\geq e^{-C' n^{1-2a}}\Pi'_\theta(\|\theta'\|_{\infty}\leq C\bar{r}_nn^{1/2-a}), \nonumber
		\end{eqnarray}
		for $C'=C'(\theta_0,\Pi'_\theta)$ using also that $\Pi_\theta(\|\theta\|_{H^{\beta}(\mathcal{X})}\leq \tilde R)\leq 1/2 $ for $R$ large enough as follows from \eref{eq:prior-scaling-fernique}. The rest of the argument follows \cite[Lemma 11]{abraham2019} and uses \cite[Theorem 1.2]{li1999}, see also Lemma \ref{lemma:covering-lipschitz}, and the continuous embedding $\mathcal{H}\subset H^\delta(\mathcal{X})$ to conclude
        \begin{eqnarray}\nonumber
	       \Pi'_\theta(\|\theta'\|_{\infty} \leq C\bar{r}_nn^{1/2-a}) &\geq e^{-C''(\bar{r}_nn^{1/2-a})^{-b}},\\\nonumber
			&= e^{-C''nr_n^2}
 		\end{eqnarray} 
  with $C''=C''(C,C')$ and $b=\frac{2d'}{2\delta-d'}$, which fits the choice \eref{eq:a-ineq-ws} of $a$.
  \end{proof}

\section{Parametrizations for inclusions} 
\label{section:4} 
In this section, we make use of Theorem \ref{thm:consist-in-Hbeta} for two specific parametrizations suited for inclusion detection: a star-shaped set parametrization and a level set parametrization. These are parametrizations on the form
\begin{equation}\label{eq:paramform}
    \Phi(\theta) = \sum_{i=1}^{\mathcal{N}} \kappa_i \mathds{1}_{A_i(\theta)}
\end{equation}
for some Lebesgue measurable subsets $A_i(\theta)$ of $\mathbb{R}^d$ and constants $\kappa_i >0$ for $i=1,\ldots,\mathcal{N}$, which we denote collectively as $\mathbf{\kappa} = \{\kappa_i\}_{i=1}^\mathcal{N}$. Since we consider parametrizations that map into $L^2_\Lambda(D)$, we will implicitly consider $\Phi(\theta)$ as the restriction of the right-hand side of \eref{eq:paramform} to $D$. Note that recovering parameters on this form requires that we know \textit{a priori} the parameter values $\kappa_i$. However, this could further be modelled into the prior. In the following, we construct $A_i(\theta)$ as star-shaped sets and level sets. 
\subsection{Star-shaped set parametrization}
We start by considering the parametrization for a single inclusion, i.e. $\mathcal{N}=1$. For simplicity of exposition, we consider the star-shaped sets in the plane, although it is straight-forward to generalize to higher dimensions. Let $\varphi$ be a continuously differentiable $2\pi$-periodic function. We can think of $\theta: \mathbb{T}\rightarrow \mathbb{R}$ as a function defined on the 1-dimensional torus $\mathbb{T}:=\mathbb{R}/2\pi \mathbb{Z}$. The boundary of the star-shaped set is a deformed unit circle: for a point $x$ in $D$ it takes for  $v(\vartheta):=(\cos\vartheta,\sin\vartheta)$ the form
\begin{eqnarray}\nonumber
    \partial A(\theta)=x + \{\exp(\theta(\vartheta)) v(\vartheta),\,\, 0\leq \vartheta \leq 2\pi\},
\end{eqnarray}
Then we write
\begin{equation}\label{eq:star-shaped-inclusion}
    A(\theta)=x+\{s\exp(\theta(\vartheta)) v(\vartheta), 0\leq s \leq 1, 0\leq \vartheta \leq 2\pi\}.
\end{equation}
Let $\kappa_1,\kappa_2>0$ and define
\begin{equation}\label{eq:single-inclusion}
    \Phi(\theta):=\kappa_1 \mathds{1}_{A(\theta)}+\kappa_2.
\end{equation}
We have the following conditional continuity result, where we for simplicity fix $x\in D$. 
\begin{lemma}\label{lemma:star-shape-cont}
    Let $\theta_1,\theta_2 \in H^\beta(\mathbb{T})$ and $\|\theta_i\|_{H^\beta(\mathbb{T})}\leq M$ with $\beta>3/2$ for $i=1,2$. Then
    \begin{eqnarray}\nonumber \|\Phi(\theta_1)-\Phi(\theta_2)\|_{L^2(D)}\leq C\|\theta_1-\theta_2\|_{L^\infty(\mathbb{T})}^{1/2},\end{eqnarray}
    where $C$ only depends on $M$ and $\kappa_1$.
\end{lemma}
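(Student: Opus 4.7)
The plan is to directly compute the $L^2(D)$-norm of the difference $\Phi(\theta_1)-\Phi(\theta_2)=\kappa_1(\mathds{1}_{A(\theta_1)}-\mathds{1}_{A(\theta_2)})$, which reduces the claim to bounding the Lebesgue measure of the symmetric difference $A(\theta_1)\triangle A(\theta_2)$ by $\|\theta_1-\theta_2\|_{L^\infty(\mathbb{T})}$. Since the two sets are star-shaped with the same center $x$, the natural approach is to switch to polar coordinates centered at $x$, where the radial defining function of $A(\theta_i)$ is exactly $r\mapsto \mathds{1}_{[0,e^{\theta_i(\vartheta)}]}(r)$.

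First I would write
\begin{eqnarray*}
\|\Phi(\theta_1)-\Phi(\theta_2)\|_{L^2(D)}^2 \leq \kappa_1^2\,|A(\theta_1)\triangle A(\theta_2)|,
\end{eqnarray*}
and then pass to polar coordinates around $x$ (with Jacobian $r$) to obtain
\begin{eqnarray*}
|A(\theta_1)\triangle A(\theta_2)| = \int_0^{2\pi}\!\int_0^\infty \bigl|\mathds{1}_{[0,e^{\theta_1(\vartheta)}]}(r)-\mathds{1}_{[0,e^{\theta_2(\vartheta)}]}(r)\bigr|\, r\, dr\, d\vartheta = \tfrac{1}{2}\int_0^{2\pi}\!\bigl|e^{2\theta_1(\vartheta)}-e^{2\theta_2(\vartheta)}\bigr|\, d\vartheta.
\end{eqnarray*}
The inner integral was computed assuming without loss of generality $\theta_1(\vartheta)\geq \theta_2(\vartheta)$, so that the integrand is $1$ on $(e^{\theta_2(\vartheta)},e^{\theta_1(\vartheta)})$ and $0$ elsewhere.

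Next, I would use the mean value theorem to write $|e^{2a}-e^{2b}|\leq 2e^{2\max(a,b)}|a-b|$. The Sobolev embedding $H^\beta(\mathbb{T})\hookrightarrow L^\infty(\mathbb{T})$, which holds for $\beta>1/2$ (and therefore a fortiori for $\beta>3/2$), together with the uniform bound $\|\theta_i\|_{H^\beta(\mathbb{T})}\leq M$, yields $\|\theta_i\|_{L^\infty(\mathbb{T})}\leq C_{\mathrm{Sob}}M$ and hence a uniform bound $e^{2\max(\theta_1(\vartheta),\theta_2(\vartheta))}\leq e^{2C_{\mathrm{Sob}}M}$. Combining these gives
\begin{eqnarray*}
|A(\theta_1)\triangle A(\theta_2)| \leq 2\pi\, e^{2C_{\mathrm{Sob}}M}\,\|\theta_1-\theta_2\|_{L^\infty(\mathbb{T})}.
\end{eqnarray*}
Plugging this back and taking square roots gives the desired Hölder estimate with exponent $\zeta=1/2$ and constant $C$ depending only on $M$ and $\kappa_1$.

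There is no real obstacle here: the only mildly delicate point is the factor $r$ in the polar Jacobian, which is what linearises the exponentials into a clean difference of $e^{2\theta_i}$'s. I would also briefly remark that restriction to $D$ can only decrease the measure, so the estimate survives the restriction, and that the Hölder exponent $1/2$ (rather than $1$) is forced by the fact that we measure a \emph{symmetric difference of sets} in $L^2$, which by the identity $\|\mathds{1}_E\|_{L^2}^2=|E|$ always gives an exponent of $1/2$ relative to the area of the symmetric difference.
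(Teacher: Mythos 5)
Your proof is correct, and it takes a genuinely different route from the paper's. The paper bounds $|A(\theta_1)\Delta A(\theta_2)|$ by parametrizing the symmetric difference with the interpolating map $K(s,\vartheta)=[s e^{\theta_1(\vartheta)}+(1-s)e^{\theta_2(\vartheta)}]v(\vartheta)$ and applying the change-of-variables (area) formula; controlling $|\partial_\vartheta K|$ there requires a $C^1$ bound on the $\theta_i$, which is where the hypothesis $\beta>3/2$ and the embedding $H^\beta(\mathbb{T})\hookrightarrow C^1(\mathbb{T})$ enter. You instead integrate directly in polar coordinates centered at $x$, which yields the exact identity $|A(\theta_1)\Delta A(\theta_2)|=\tfrac12\int_0^{2\pi}|e^{2\theta_1(\vartheta)}-e^{2\theta_2(\vartheta)}|\,d\vartheta$, and then you only need the mean value theorem together with the embedding $H^\beta(\mathbb{T})\hookrightarrow L^\infty(\mathbb{T})$. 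Your computation of the inner radial integral is right (the integrand is the indicator of the radial interval between $e^{\theta_1(\vartheta)}$ and $e^{\theta_2(\vartheta)}$, and $\int_{\min}^{\max} r\,dr=\tfrac12|e^{2\theta_1}-e^{2\theta_2}|$), and your remark that restricting to $D$ only shrinks the measure closes the argument. What your approach buys is that it is more elementary (no Jacobian determinant, no area formula), it produces an identity rather than an upper bound for the symmetric difference, and it proves the lemma under the weaker assumption $\beta>1/2$ --- the stronger hypothesis $\beta>3/2$ is not needed for this continuity estimate, though the paper uses that level of regularity elsewhere. The paper's homotopy parametrization, on the other hand, is the kind of argument that transfers to situations where the two boundaries are not graphs over a common angular variable; for genuinely star-shaped sets with a common center, your polar computation is the cleaner tool and also generalizes to $d>2$ with Jacobian $r^{d-1}$.
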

\begin{proof}
By the translation invariance of the Lebesgue measure, it is sufficient to bound the area of the symmetric difference $A(\theta_1)\Delta A(\theta_2):= (A(\theta_1) \setminus  A(\theta_2)) \cup (A(\theta_2)\setminus A(\theta_1)) $ for $x=0$. We parameterize this planar set using $K:[0,1]\times [0,2\pi] \rightarrow \mathbb{R}^2$, defined by
    \begin{eqnarray}\nonumber K(s,\vartheta) = [s\exp(\theta_1(\vartheta))+(1-s)\exp(\theta_2(\vartheta))]v(\vartheta).\end{eqnarray}

 Note that $\|\theta_i\|_{H^\beta(\mathbb{T})}\leq M$ implies $\|\theta_i\|_{C^1(\mathbb{T})}\leq CM$ by a continuous Sobolev embedding. We have
    \begin{eqnarray}\nonumber 
        \frac{\partial K}{\partial s}(s,\vartheta) &= [\exp(\theta_1(\vartheta))-\exp(\theta_2(\vartheta))]v(\vartheta),\\ \nonumber
        \left|\frac{\partial K}{\partial \vartheta}(s,\vartheta)\right| &\leq C(M),
    \end{eqnarray}
    and the well-known change of variables formula,
    \begin{eqnarray}\fl \nonumber
        \mathrm{vol}(A(\theta_1)\Delta A(\theta_2)) &= \int_{0}^1 \int_{0}^{2\pi} |JK(s,\vartheta)| \, d\vartheta \, ds,\\\nonumber
        &\leq C(|(\partial_s K(s,\vartheta))_1||(\partial_\vartheta K(s,\vartheta))_2|+ |(\partial_s K(s,\vartheta))_2||(\partial_\vartheta K(s,\vartheta))_1|),\\\nonumber
        &\leq C(M)|e^{\theta_1(\vartheta)}-e^{\theta_2(\vartheta)}|,\\\nonumber
        &\leq C(M) \|\theta_1-\theta_2\|_{L^\infty(\mathbb{T})},
    \end{eqnarray}
    where $|JK(s,\vartheta)|$ is the determinant of the Jacobian of the map $K$. In the last line, we used that $z\mapsto \exp(z)$ is locally Lipschitz as follows from the mean value theorem.
\end{proof}
Using the triangle inequality for the symmetric difference and the main result of \cite{schymura2014}, we would also have an estimate on the continuity of $\Phi$ as defined on $D\times H^\beta(\mathbb{T})$, i.e. on elements $(x,\theta)$. We could then endow $D\times H^\beta(\mathbb{T})$ with a product prior which straight-forwardly satisfies Condition \ref{cond:1.1}. For simplicity we skip this extension. Instead, we gather the following conclusion that follows directly from Theorem \ref{thm:consist-in-Hbeta} and Corollary \ref{corollary:post-mean}.

\begin{theorem}\label{thm:star1}
    Suppose Condition \ref{cond:forward-map} is satisfied for $\beta>3/2$. Let $\gamma_0=\Phi(\theta_0)$ for $\theta_0\in \mathcal{H}$. Let $\Pi(\cdot|Y)$ be the corresponding sequence of posterior distributions arising for the model \eref{eq:obs} and prior $\Pi = \Phi\Pi_\theta$ satisfying Condition \ref{cond:prior}. Then there exists $C>0$ such that
    \begin{equation}
        \|E[\gamma|Y]-\gamma_0\|_{L^2(D)}\rightarrow 0 \quad \textnormal{ in $P_n^{\gamma_0}$-probability}
    \end{equation}
    with rate $f(Cn^{-a})$ as $n\rightarrow \infty$, where
    \begin{eqnarray}\nonumber a=\frac{\eta \delta}{2\eta\delta+2}.\end{eqnarray}    
\end{theorem}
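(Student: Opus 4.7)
The plan is to verify that the star-shaped parametrization fits into the abstract framework of Theorem \ref{thm:consist-in-Hbeta}, and then to invoke Corollary \ref{corollary:post-mean}. Since Condition \ref{cond:forward-map} and Condition \ref{cond:prior} are assumed in the hypothesis, the only remaining item is Condition \ref{cond:unifcont} on the parametrization $\Phi$ defined in \eref{eq:single-inclusion}.

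First I would identify the domain $\mathcal{X}$ of the underlying Sobolev space. Since $\theta$ lives on the one-dimensional torus $\mathbb{T}$, we have $d'=1$ and $\Theta = H^\beta(\mathbb{T})$ with $\beta > 3/2 = d'/2 + 1$. This ensures the Sobolev embedding $H^\beta(\mathbb{T})\hookrightarrow C^1(\mathbb{T})$ required in the proof of Lemma \ref{lemma:star-shape-cont}, and in particular $\beta > d'/2$ as required in Condition \ref{cond:prior}. Lemma \ref{lemma:star-shape-cont} then gives exactly the Hölder continuity of $\Phi$ demanded by Condition \ref{cond:unifcont}, with exponent $\zeta = 1/2$ and constant $C_\Phi = C_\Phi(M,\kappa_1)$.

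With all three conditions now verified, Theorem \ref{thm:consist-in-Hbeta} applies and yields posterior contraction around $\gamma_0$ at the rate $r_n = n^{-a}$ with
\begin{eqnarray}\nonumber
a \;=\; \frac{\eta\,\zeta\,\delta}{2\eta\zeta\delta+d'} \;=\; \frac{\eta\,(1/2)\,\delta}{2\eta(1/2)\delta+1} \;=\; \frac{\eta\delta}{2\eta\delta+2},
\end{eqnarray}
which matches the exponent claimed in the statement. The conditional stability estimate \eref{eq:inverse-stability-est} is provided by Condition \ref{cond:forward-map} via the modulus $f$, so the $L^2(D)$-contraction ball has radius $f(C_0 r_n)$.

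Finally, I would apply Corollary \ref{corollary:post-mean}, whose hypotheses are precisely those of Theorem \ref{thm:consist-in-Hbeta}, to pass from posterior contraction to convergence of the posterior mean $E[\gamma|Y]$ in $L^2(D)$-norm with rate $f(Cn^{-a})$ in $P_n^{\gamma_0}$-probability. This completes the argument. There is no real obstacle beyond bookkeeping: the non-trivial work is already concentrated in Lemma \ref{lemma:star-shape-cont}, which supplies the Hölder exponent $\zeta = 1/2$ via a geometric estimate on the symmetric difference of the star-shaped sets; the rest is substitution into the general rate formula \eref{eq:contraction-rate-a}.
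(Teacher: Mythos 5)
Your proposal is correct and is essentially identical to the paper's own argument: the paper states that Theorem \ref{thm:star1} ``follows directly from Theorem \ref{thm:consist-in-Hbeta} and Corollary \ref{corollary:post-mean}'' once Lemma \ref{lemma:star-shape-cont} supplies Condition \ref{cond:unifcont} with $\zeta=1/2$, and then notes the rate is \eref{eq:contraction-rate-a} with $\zeta=1/2$ and $d'=1$, exactly as you compute. The only substantive content is the verification of the Hölder exponent via the symmetric-difference estimate, which you correctly locate in Lemma \ref{lemma:star-shape-cont}.
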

Note that this is the rate of \eref{eq:contraction-rate-a} with $\zeta = 1/2$ and $d'=1$. Clearly this convergence rate takes into account that a smooth star-shaped inclusion belongs to a low-dimensional subset of $L^2_\Lambda(D)$. One can think of this fast convergence rate (compared to Gaussian priors directly in $L^2(D)$) as an expression of uncertainty reduction. 
Parameters $\gamma\in L^2_\Lambda(D)$ on the form \eref{eq:single-inclusion} carry some regularity. Indeed, using results in \cite{sickel1999,faraco2013} showing $\alpha$-Sobolev regularity for $0<\alpha<1/2$ reduces to giving an upper bound of the area of the $\varepsilon$-tubular neighborhood of $\partial A(\theta)$ with respect to $\varepsilon$. This is provided by Steiner's inequality, see \cite{makai1959}, for $d=2$, or more generally by Weyl’s tube formula, see \cite{gray2004}, when $d\geq 2$. Then $\|\Phi(\theta)\|_{H^{\alpha}(D)}\leq C(M,D,\alpha)$ for $\|\theta\|_{H^\beta(\mathbb{T})}\leq M$.

\subsubsection{Multiple inclusions}
The case of multiple star-shaped inclusions is a straight-forward generalization using the triangle inequality. We consider for $\mathcal{N}\geq 1$, the map 
\begin{eqnarray}\nonumber \Phi:(H^\beta(\mathbb{T}))^{\mathcal{N}} \rightarrow L^2_\Lambda(D)\end{eqnarray}
as in \eref{eq:paramform} with $A_i(\theta)=A(\theta_i) + x_i$ from $A$ in \eref{eq:star-shaped-inclusion} with $x=0$, $x_i\in D$, and where we set $\theta=(\theta_1,\ldots,\theta_{\mathcal{N}})$. We denote $\|\cdot\|_{\mathcal{N}}$ the direct product norm associated with the norm on $L^\infty(\mathbb{T})$, i.e. 
\begin{eqnarray}\nonumber \|\theta\|_{\mathcal{N}} = \max\left(\|\theta_1\|_{L^\infty(\mathbb{T})},\ldots, \|\theta_\mathcal{N}\|_{L^\infty(\mathbb{T})}\right).\end{eqnarray}
We have the following continuity result.
\begin{lemma}\label{lemma:Ninclusions}
    Let $\theta_i, \tilde{\theta}_i \in H^\beta(\mathbb{T})$ with $\|\theta_i\|_{H^\beta(\mathbb{T})}\leq M$, $\|\tilde{\theta}_i\|_{H^\beta(\mathbb{T})}\leq M$ for $i=1,\ldots,\mathcal{N}$. For $\theta=(\theta_1,\ldots,\theta_{\mathcal{N}})$ and $\tilde{\theta}=(\tilde{\theta}_1,\ldots,\tilde{\theta}_\mathcal{N})$ we have
    \begin{eqnarray}\nonumber \|\Phi(\theta)-\Phi(\tilde{\theta})\|_{L^2(D)}\leq C\|\theta-\tilde{\theta}\|_{\mathcal{N}}^{1/2},\end{eqnarray}
    where $C$ only depends on $M$, $\kappa$ and $\mathcal{N}$.
\end{lemma}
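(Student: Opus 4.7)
The plan is to reduce the estimate for $\mathcal{N}$ inclusions to $\mathcal{N}$ applications of the single-inclusion bound from Lemma \ref{lemma:star-shape-cont}, using the triangle inequality in $L^2(D)$. No new geometric analysis is required; the parameter components only interact through the linear sum structure of $\Phi$ in \eref{eq:paramform}.

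First, since $\Phi(\theta)-\Phi(\tilde{\theta}) = \sum_{i=1}^{\mathcal{N}} \kappa_i (\mathds{1}_{A_i(\theta)}-\mathds{1}_{A_i(\tilde{\theta})})$ and $A_i(\theta)=A(\theta_i)+x_i$ depends only on $\theta_i$, the triangle inequality in $L^2(D)$ gives
\begin{eqnarray}\nonumber
\|\Phi(\theta)-\Phi(\tilde{\theta})\|_{L^2(D)} \leq \sum_{i=1}^{\mathcal{N}} \kappa_i \|\mathds{1}_{A(\theta_i)+x_i}-\mathds{1}_{A(\tilde{\theta}_i)+x_i}\|_{L^2(D)}.
\end{eqnarray}
For each $i$ we have $\|\mathds{1}_{A(\theta_i)+x_i}-\mathds{1}_{A(\tilde{\theta}_i)+x_i}\|_{L^2(D)}^2 \leq \mathrm{vol}((A(\theta_i)+x_i)\Delta(A(\tilde{\theta}_i)+x_i)) = \mathrm{vol}(A(\theta_i)\Delta A(\tilde{\theta}_i))$ by translation invariance of Lebesgue measure.

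Second, I would apply exactly the Jacobian computation from the proof of Lemma \ref{lemma:star-shape-cont} to each symmetric difference. Since $\|\theta_i\|_{H^\beta(\mathbb{T})},\|\tilde\theta_i\|_{H^\beta(\mathbb{T})}\leq M$ and $\beta>3/2$, a Sobolev embedding into $C^1(\mathbb{T})$ controls the angular derivative of the map $K$ used there, and local Lipschitz continuity of $z\mapsto e^z$ yields $\mathrm{vol}(A(\theta_i)\Delta A(\tilde\theta_i)) \leq C(M)\|\theta_i-\tilde\theta_i\|_{L^\infty(\mathbb{T})}$. This constant depends only on $M$ because all components obey the same $H^\beta$-bound.

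Combining these estimates with the definition of $\|\cdot\|_{\mathcal{N}}$,
\begin{eqnarray}\nonumber
\|\Phi(\theta)-\Phi(\tilde{\theta})\|_{L^2(D)} \leq \sqrt{C(M)}\sum_{i=1}^{\mathcal{N}} \kappa_i \|\theta_i-\tilde\theta_i\|_{L^\infty(\mathbb{T})}^{1/2} \leq \sqrt{C(M)}\,\mathcal{N}\max_i\kappa_i\, \|\theta-\tilde\theta\|_{\mathcal{N}}^{1/2},
\end{eqnarray}
which yields the claim with a constant $C$ depending only on $M$, $\kappa$ and $\mathcal{N}$. There is no real obstacle: the symmetric differences localize the change to each individual inclusion, so the single-inclusion Lemma \ref{lemma:star-shape-cont} handles each term uniformly in $M$, and the triangle inequality absorbs the number $\mathcal{N}$ of inclusions into the constant.
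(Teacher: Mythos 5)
Your proposal is correct and follows essentially the same route as the paper: triangle inequality over the $\mathcal{N}$ terms, the single-inclusion symmetric-difference bound of Lemma \ref{lemma:star-shape-cont} (which you re-derive via the Jacobian computation rather than simply citing the lemma), and then absorbing the sum into $\mathcal{N}\max_i$ of the components, which is the paper's ``equivalence of $p$-norms on $\mathbb{R}^{\mathcal{N}}$'' step. The only cosmetic difference is that the paper works with the squared norm throughout while you take square roots termwise; both yield the same constant dependence on $M$, $\kappa$ and $\mathcal{N}$.
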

\begin{proof}
    Using the triangle inequality and Lemma \ref{lemma:star-shape-cont},
    \begin{eqnarray}\nonumber
        \|\Phi(\theta)-\Phi(\tilde{\theta})\|_{L^2(D)}^2 &= \left\|\sum_{i=1}^{\mathcal{N}} \kappa_i(\mathds{1}_{A_i(\theta)}-\mathds{1}_{A_i(\tilde{\theta})})\right\|_{L^2(D)}^2 ,\\\nonumber
        &\leq C \left(\sum_{i=1}^{\mathcal{N}} \|\mathds{1}_{A_i(\theta)}-\mathds{1}_{A_i(\tilde{\theta})}\|_{L^2(D)}\right)^2,\\\nonumber
        &\leq C\left(\sum_{i=1}^{\mathcal{N}} \|\theta_i-\tilde{\theta}_i\|_{L^\infty(\mathbb{T})}^{1/2}\right)^2,\\\nonumber
        &\leq C\|\theta-\tilde{\theta}\|_\mathcal{N}, 
    \end{eqnarray}
    by the equivalence of the $p$-norms $p>0$ on $\mathbb{R}^{\mathcal{N}}$.
\end{proof}
Parallel to the remark before Lemma \ref{lemma:star-shape-cont}, we mention that a statement similar to Lemma \ref{lemma:Ninclusions} holds true for a map $\Phi$ defined on $(D\times H^{\beta}(\mathbb{T}))^{\mathcal{N}}$, if we in addition wish to infer $x_1,\ldots, x_\mathcal{N}$. In preparation for the main result of this section let us change notation to suit the current setting. Let
\begin{equation}\label{eq:Theta-S-def} \fl
    \Theta = H^\beta(\mathbb{T})^{\mathcal{N}} \quad \textnormal{ and } \quad \mathcal{S}_\beta(M)=\{\theta\in\Theta:\|\theta_i\|_{H^\beta(\mathbb{T})}<M, i=1,\ldots,\mathcal{N}\}.
\end{equation}
We then endow $\Theta$ with a (product) prior distribution of $\Pi_\theta$ satisfying Condition \ref{cond:prior}:
\begin{equation}\label{eq:product-prior}
    \tilde{\Pi}_\theta = \otimes_{i=1}^\mathcal{N} \Pi_\theta \quad \textnormal{satisfying} \quad \tilde{\Pi}_\theta(B) = \Pi_\theta(B_1)\ldots \Pi_\theta(B_{\mathcal{N}}),
\end{equation}
for $B = B_1\times \ldots \times B_{\mathcal{N}} \in \mathcal{B}(H^\beta(\mathbb{T}))^{\mathcal{N}} = \mathcal{B}(H^\beta(\mathbb{T})^{\mathcal{N}})$. The last equality is found in for example \cite[Lemma 1.2]{kallenberg2021}. For this prior, we have the following result, which is accounted for in Appendix \ref{sec:extra-proofs}.
\begin{theorem}\label{thm:star}
    Suppose Condition \ref{cond:forward-map} is satisfied for $\mathcal{S}_\beta(M)$ as in \eref{eq:Theta-S-def} for $\beta>3/2$. Let $\gamma_0=\Phi(\theta_0)=\Phi(\theta_{0,1},\ldots,\theta_{0,\mathcal{N}})$ for $\theta_{0,i} \in \mathcal{H}$, $i=1,\ldots,\mathcal{N}$. Let $\Pi(\cdot|Y)$ be the corresponding sequence of posterior distributions arising for the model \eref{eq:obs} and prior $\Pi = \Phi\tilde{\Pi}_\theta$ for \eref{eq:product-prior}. Then there exists $C>0$ such that
    \begin{equation}
        \|E[\gamma|Y]-\gamma_0\|_{L^2(D)}\rightarrow 0 \quad \textnormal{ in $P_n^{\gamma_0}$-probability}
    \end{equation}
    with rate $f(Cn^{-a})$ as $n\rightarrow \infty$, where
    \begin{eqnarray}\nonumber a=\frac{\eta \delta}{2\eta\delta+2}.\end{eqnarray}    
\end{theorem}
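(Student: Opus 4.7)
The plan is to reduce everything to the abstract machinery of Theorem \ref{thm:consist-in-Hbeta} and Corollary \ref{corollary:post-mean}, adapted to the product setting $\Theta = H^\beta(\mathbb{T})^{\mathcal{N}}$ with the product Hilbert norm $\|\theta\|^2_\Theta = \sum_{i=1}^{\mathcal{N}} \|\theta_i\|^2_{H^\beta(\mathbb{T})}$ and with the direct product sup-norm $\|\cdot\|_\mathcal{N}$ used in Condition \ref{cond:unifcont}. Lemma \ref{lemma:Ninclusions} verifies Condition \ref{cond:unifcont} with exponent $\zeta = 1/2$ on the regularity set $\mathcal{S}_\beta(M)$ defined in \eref{eq:Theta-S-def}; Condition \ref{cond:forward-map} is an assumption of the theorem. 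So it remains to confirm that the product prior $\tilde{\Pi}_\theta$ satisfies the analogue of Condition \ref{cond:prior} and to reverify the three lemmas (small ball, excess mass, metric entropy) in the product setting.

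First, I would observe that $\tilde{\Pi}_\theta$ is a centred Gaussian Borel probability measure on the separable Hilbert space $H^\beta(\mathbb{T})^{\mathcal{N}}$ with RKHS $\tilde{\mathcal{H}} = \mathcal{H}^{\mathcal{N}}$ endowed with the natural product norm; this is a standard fact about product Gaussian measures (see for example \cite[Theorem B.1]{vollmer2013} applied componentwise, or direct verification via the covariance). Since $\mathcal{H}\hookrightarrow H^\delta(\mathbb{T})$ continuously, the product embedding $\tilde{\mathcal{H}}\hookrightarrow H^\delta(\mathbb{T})^{\mathcal{N}}$ is also continuous. The scaling in \eref{eq:scaled-prior} carries over componentwise. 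Then $\gamma_0 = \Phi(\theta_0)$ with $\theta_0 = (\theta_{0,1},\dots,\theta_{0,\mathcal{N}}) \in \tilde{\mathcal{H}}$ by assumption, putting us in the setting of Theorem \ref{thm:consist-in-Hbeta}.

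Next I would reverify the three lemmas. For the excess mass (Lemma \ref{lemma:excess-mass-2}), a union bound across components reduces the bound on $\tilde{\Pi}_\theta(\Theta\setminus\tilde{\Theta}_n)$ to $\mathcal{N}$ copies of the single-component bound, absorbing the factor $\mathcal{N}$ into the constant $M$. For the metric entropy (Lemma \ref{lemma:metric-entropy-reg-sets}), the covering number of a product of sets in the product sup-norm $\|\cdot\|_\mathcal{N}$ is bounded by the product of the componentwise covering numbers, so
\begin{eqnarray}\nonumber
\log N(\tilde{\Theta}_n, \|\cdot\|_\mathcal{N}, 2M\bar{r}_n) \leq \mathcal{N}\log N(\mathcal{S}_\delta(CM), \|\cdot\|_\infty, M\bar{r}_n),
\end{eqnarray}
which is still $O(\bar{r}_n^{-1/\delta})$; the proof of Lemma \ref{lemma:metric-entropy-reg-sets} then goes through with the same rate $a$. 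For the small ball lemma (Lemma \ref{lemma:small-ball}), I would apply the factorization of the product Gaussian measure: the event $\{\|\theta-\theta_0\|_\mathcal{N}\leq C\bar{r}_n\}\cap\{\|\theta-\theta_0\|_\Theta\leq \tilde{R}\}$ can be bounded below (after a Gaussian correlation inequality step as in \eref{eq:gaussian-correlation}) by a product of $\mathcal{N}$ single-component small ball probabilities, each giving $e^{-C''nr_n^2}$, so the overall bound is of the same exponential form with a larger constant.

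With all three lemmas verified, Theorem \ref{thm:consist-in-Hbeta} and Corollary \ref{corollary:post-mean} apply with $d' = 1$ (the dimension of $\mathbb{T}$) and $\zeta = 1/2$, yielding the rate
\begin{eqnarray}\nonumber
a = \frac{\eta\zeta\delta}{2\eta\zeta\delta + d'} = \frac{\eta\delta}{2\eta\delta + 2},
\end{eqnarray}
as stated. The main obstacle is bookkeeping the product structure cleanly, in particular confirming that the RKHS of $\tilde{\Pi}_\theta$ is the product RKHS and that the Gaussian correlation inequality and Borell-type small-ball lower bound apply on the product Hilbert space; once this is in place, the three lemmas are essentially parallel to the single-inclusion case, with constants rescaled by $\mathcal{N}$.
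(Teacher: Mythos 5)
Your proposal is correct and follows essentially the same route as the paper's proof in Appendix C: verify Conditions \ref{cond:1.1}--\ref{cond:1.3} for the product set $\Theta_n$ by combining the product structure of $\tilde{\Pi}_\theta$ with a union bound for the excess mass, the product covering-number bound of Lemma \ref{lemma:covering-numbers}~$(ii)$ for the metric entropy, and a componentwise reduction of the small-ball event before invoking the argument of Lemma \ref{lemma:small-ball}, then conclude via Theorem \ref{thm:consist-in-Hbeta} and Corollary \ref{corollary:post-mean} with $\zeta=1/2$ and $d'=1$. The only cosmetic difference is that the paper phrases the union bound through the set identity $A^2\setminus B^2=[A\times(A\setminus B)]\cup[(A\setminus B)\times A]$ and does not explicitly restate Condition \ref{cond:prior} for the product RKHS, but the substance is identical.
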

Note that this is the rate as of Theorem \ref{thm:star1}, i.e. the rate does not depend on the number of inclusions; this dependence appears in the constant $C$.

\subsection{Level set parametrization}\label{sec:level-set}
In this section, we consider the level set parametrization of piecewise constant functions. The simplest case is to compose a given continuous function $\theta:\mathcal{X}\rightarrow \mathbb{R}$, for $\mathcal{X}\supset D$, i.e. $d=d'=2,3$, with the Heaviside function $H(z)=\mathds{1}_{z\geq 0}(z)$ as
\begin{eqnarray}\nonumber \gamma(x)=\Phi(\theta)(x)=\kappa_1 H(\theta(x))+\kappa_2,\end{eqnarray}
for $\kappa_1,\kappa_2>0$. However, $\Phi:H^\beta(\mathcal{X})\rightarrow L^2_\Lambda(D)$ is not uniformly Hölder continuous on $\mathcal{S}_\beta(M)$ for any $\beta,M>0$ and hence does not satisfy Condition \ref{cond:unifcont}. Indeed, if $|\nabla \theta|$ is small near the set $\{x:\theta(x)=0\}$, small changes in $\theta$ can lead to big changes in $\gamma$. A lower bound on $|\nabla\theta|$ near this set suffices, as can be seen from the implicit function theorem, see Lemma \ref{lemma:level-set-app}. This type of condition also appears in level set estimation of probability densities, see \cite{walther1997}. We illustrate this phenomenon by the following two-dimensional example.

\begin{example}\label{example:nonuniform}
    Let $\mathcal{X}=D=B(0,1/2)$ the two-dimensional disc of radius $1/2$. Take as $\theta_{(n)}$ the radially symmetric functions $\theta_{(n)}(r,\vartheta) = \frac{1}{n}+r^{2n}$ and $\tilde{\theta}_{(n)}=-\theta_{(n)}$ for $0\leq r\leq 1$ and $0\leq \vartheta\leq 2\pi$. It is clear that $\theta_{(n)},\tilde{\theta}_{(n)} \in \mathcal{S}_1(M)$ for all $n \in \mathbb{N}$, and that
    \begin{eqnarray}\nonumber
       \|\theta_{(n)}-\tilde{\theta}_{(n)}\|_{L^\infty(\mathcal{X})} &\leq 2\|n^{-1}\|_{L^\infty(\mathcal{X})} + 2\|r^{2n}\|_{L^\infty((0,1/2))},\\\nonumber
       &\leq 2n^{-1} + 2^{1-2n} \rightarrow 0
    \end{eqnarray}
    as $n\rightarrow \infty$. However $\Phi(\theta_{(n)})=\kappa_1$ and $\Phi(\tilde{\theta}_{(n)})=\kappa_2$ so $\|\Phi(\theta_{(n)})-\Phi(\tilde{\theta}_{(n)})\|_{L^2(D)}=|\kappa_2-\kappa_1|.$ 
\end{example}
The example is easy to extend to the more general case where the $L^\infty$-norm is replaced with the $C^k$-norm. Note also that for fixed $\theta_{(n)}=\theta$, we have continuity of $\Phi$ in this particular example. This fact generalizes to continuity of $\Phi$ in functions $\theta$ that do not have critical points on $\{x:\theta(x)=0\}$.
However, for the stronger Condition \ref{cond:unifcont}, it is not obvious how much mass Gaussian distributions give to functions whose gradient is lower bounded away from zero near $\{x: \theta(x)=0\}$. For this reason, we take a different approach. We define an approximation $\Phi_\epsilon$ of $\Phi$ for which Condition \ref{cond:unifcont} is satisfied. This gives an approximate posterior distribution that contracts around $\gamma_0^\epsilon = \Phi_\epsilon(\theta_0)$. We shall see that if we take $\epsilon=n^{-k}$ for some $k\in (0,1)$, then the approximation properties of $\Phi_\epsilon$ to $\Phi$ and a triangle inequality argument ensure we have consistency at $\gamma_0=\Phi(\theta_0)$.
To this end, consider the continuous approximation $H_\epsilon$ of the Heaviside function 
\begin{equation}
H_\epsilon(z):=  \cases{ 
0 &if $z<-\epsilon$,\\
\frac{1}{2\epsilon}z+\frac{1}{2} &if $-\epsilon\leq z < \epsilon$,\\
1 &if $\epsilon \leq z$.
}
\end{equation}
We want to note two straight-forward properties of $H_\epsilon$:
\begin{equation}\label{eq:Hdelta-cont}
    |H_\epsilon(z)-H_\epsilon(\tilde{z})| \leq \frac{1}{2\epsilon}|z-\tilde{z}|, \quad \textnormal{ for all } z,\tilde{z} \in \mathbb{R},
\end{equation}
and
\begin{equation}\label{eq:Hdelta-approx}
    |H_\epsilon(z)-H(z)|\leq \frac{1}{2}\mathds{1}_{(-\epsilon,\epsilon)}(z), \quad \textnormal{ for all } z \in \mathbb{R}.
\end{equation}
We could even consider a smooth approximation for $H_\epsilon$, as in \cite{reese2021}, but this is not necessary for our case. To construct the continuous level set parametrization, take constants $\mathbf{c}=\{c_i\}_{i=1}^{\mathcal{N}}$ satisfying 
\begin{eqnarray}\nonumber -\infty = c_0 < c_1 <\ldots < c_\mathcal{N} = \infty\end{eqnarray}
for some $\mathcal{N}\in \mathbb{N}$. Given a continuous function $\theta: D\rightarrow \mathbb{R}$ define
\begin{eqnarray}\nonumber A_i(\theta) :=\{x\in D: c_{i-1} \leq \theta(x) < c_i \}, \quad i=1,\ldots,\mathcal{N},\end{eqnarray}
and let $\Phi$ be of the form \eref{eq:paramform}. The corresponding approximate level set parametrization is then
\begin{equation}\label{eq:smooth-levelset}
    \Phi_\epsilon(\theta):=\sum_{i=1}^{\mathcal{N}} \kappa_i [ H_\epsilon(\theta-c_{i-1})- H_\epsilon(\theta-c_{i})],
\end{equation}
where we define $H_\epsilon(z-c_0)=1$ and $H_\epsilon(z-c_{\mathcal{N}})=0$ for any $z\in \mathbb{R}$. One can check that $\Phi_\epsilon$ coincides with $\Phi$, when $\epsilon = 0$. 
Motivated by Example \ref{example:nonuniform} and the property that stationary Gaussian random fields have almost surely no critical points on their level sets, we define the admissible level set functions as 
\begin{eqnarray}\nonumber H^{\beta}_{\diamond}(\mathcal{X}) := H^\beta(\mathcal{X}) \cap \bigcap_{i=1}^{\mathcal{N}-1} T_{c_i}, \quad \beta>2+\frac{d'}{2},\end{eqnarray}
where
\begin{eqnarray}\nonumber T_c := \{\theta \in C^2(\overline{\mathcal{X}}): \exists x \in \mathcal{X}, \theta(x) = c, |(\nabla \theta)(x)| = 0\}^{\complement}.\end{eqnarray}
Indeed, according to \cite[Proposition 6.12]{azais2009}, for each fixed $c\in \mathbb{R}$ we have
\begin{equation}\label{eq:level-set-critical-points}
    \Pi_\theta'(T_c) = 1 \quad \textnormal{ and hence } \quad \Pi_\theta'(H_\diamond^\beta(\mathcal{X})) = 1
\end{equation}
if $\Pi_\theta'(C^2(\overline{\mathcal{X}}))=1$ and the covariance function associated with $(\theta(x):x\in \mathcal{X})$ for $\theta\sim \Pi_\theta'$ is stationary. This is permitted since $((\theta(x),\partial_1\theta(x),\ldots,\partial_{d'}\theta(x)):x\in \mathcal{X})$ is a Gaussian process, see for example \cite[Section 9.4]{rasmussen2006}. Note also that it is known that $T_c\in \mathcal{B}(C^2(\overline{D}))$ since $\{\theta\in C^2(\overline{D}): |\theta(x)-c|+|(\nabla \theta)(x)|\geq 1/n, \forall x\in D\}$ is a Borel set.

\begin{lemma}\label{lemma:4.5} We have the following:
\begin{enumerate}[label=(\roman*)]
  \item If $\theta_0 \in H^\beta_\diamond(\mathcal{X})$, then for $\beta>1+d'/2$ and $\epsilon>0$ sufficiently small
  \begin{eqnarray}\nonumber \|\Phi_\epsilon(\theta_0)-\Phi(\theta_0)\|_{L^2(D)}\leq C(\theta_0,\mathcal{X},D,\mathbf{c})\epsilon^{1/2}.\end{eqnarray}
  \item For any $\theta,\tilde{\theta} \in H^2(\mathcal{X})$,
  \begin{eqnarray}\nonumber \|\Phi_\epsilon(\theta)-\Phi_\epsilon(\tilde{\theta})\|_{L^2(D)} \leq C(\mathbf{\kappa},\mathcal{N},D) \epsilon^{-1}\|\theta-\tilde{\theta}\|_{L^\infty(D)}.\end{eqnarray}
\end{enumerate}
\end{lemma}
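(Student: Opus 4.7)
The plan is to rewrite both $\Phi$ and $\Phi_\epsilon$ using the telescoping identity
\begin{eqnarray}\nonumber
\Phi_\epsilon(\theta)(x) = \kappa_1 + \sum_{i=1}^{\mathcal{N}-1}(\kappa_{i+1}-\kappa_i)\, H_\epsilon(\theta(x)-c_i),
\end{eqnarray}
which follows directly from \eref{eq:smooth-levelset} and the boundary conventions $H_\epsilon(z-c_0)=1$, $H_\epsilon(z-c_{\mathcal{N}})=0$; the same identity with $H$ in place of $H_\epsilon$ recovers $\Phi(\theta)$. Consequently both $\Phi_\epsilon(\theta)-\Phi_\epsilon(\tilde\theta)$ and $\Phi(\theta_0)-\Phi_\epsilon(\theta_0)$ reduce to finite sums over $i=1,\ldots,\mathcal{N}-1$ whose $i$-th term only involves $H_\epsilon$ or $H-H_\epsilon$ at the single level $c_i$, with coefficients $\kappa_{i+1}-\kappa_i$.

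For (ii) I would apply the Lipschitz estimate \eref{eq:Hdelta-cont} termwise and use the triangle inequality to obtain
\begin{eqnarray}\nonumber
|\Phi_\epsilon(\theta)(x)-\Phi_\epsilon(\tilde\theta)(x)| \leq \frac{1}{2\epsilon}\sum_{i=1}^{\mathcal{N}-1}|\kappa_{i+1}-\kappa_i|\cdot |\theta(x)-\tilde\theta(x)|,
\end{eqnarray}
then take the $L^2(D)$-norm and bound $\|\theta-\tilde\theta\|_{L^2(D)}\leq \mathrm{vol}(D)^{1/2}\|\theta-\tilde\theta\|_{L^\infty(D)}$ to obtain (ii). For (i), the approximation inequality \eref{eq:Hdelta-approx} instead gives the pointwise bound
\begin{eqnarray}\nonumber
|\Phi(\theta_0)(x)-\Phi_\epsilon(\theta_0)(x)| \leq \frac{1}{2}\sum_{i=1}^{\mathcal{N}-1}|\kappa_{i+1}-\kappa_i|\,\mathds{1}_{\{|\theta_0(x)-c_i|<\epsilon\}},
\end{eqnarray}
so that squaring, integrating over $D$, and using that for $\epsilon<\frac{1}{2}\min_i(c_{i+1}-c_i)$ the tubular sets are pairwise disjoint, reduces (i) to proving the uniform tubular estimate $\mathrm{vol}(\{x\in D:|\theta_0(x)-c_i|<\epsilon\})=O(\epsilon)$ for each $c_i$.

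This tubular volume estimate is the heart of the argument. The Sobolev embedding $H^\beta(\mathcal{X})\hookrightarrow C^1(\overline{\mathcal{X}})$ for $\beta>1+d'/2$, combined with $\theta_0\in H^\beta_\diamond(\mathcal{X})$, ensures $|\nabla\theta_0|>0$ at every point of the level set $K_i:=\{\theta_0=c_i\}\cap\overline{D}$. Under the natural geometric assumption $\overline{D}\subset\mathcal{X}$, the set $K_i$ is a compact subset of the open set $\mathcal{X}$, so continuity of $\nabla\theta_0$ upgrades pointwise positivity to a uniform lower bound $|\nabla\theta_0|\geq\delta_i>0$ on some open neighborhood $U_i\supset K_i$. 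For $\epsilon$ small enough one has $\{|\theta_0-c_i|<\epsilon\}\cap\overline{D}\subset U_i$, and the implicit function theorem applied in finitely many charts covering $K_i$ yields a uniform bound $\mathcal{H}^{d'-1}(\{\theta_0=t\}\cap D)\leq M_i$ for $|t-c_i|<\epsilon$. The co-area formula then delivers
\begin{eqnarray}\nonumber
\mathrm{vol}(\{|\theta_0-c_i|<\epsilon\}\cap D) = \int_{c_i-\epsilon}^{c_i+\epsilon}\int_{\{\theta_0=t\}\cap D}\frac{d\mathcal{H}^{d'-1}}{|\nabla\theta_0|}\,dt \leq \frac{2M_i \epsilon}{\delta_i}.
\end{eqnarray}
Summing the $\mathcal{N}-1$ contributions and taking the square root gives (i). The main obstacle is precisely the uniform gradient lower bound $|\nabla\theta_0|\geq\delta_i$ on a neighborhood of $K_i$: the $T_{c_i}$ membership only delivers pointwise non-degeneracy, and it upgrades to a quantitative neighborhood estimate only because $\overline{D}$ is compactly contained in the open set $\mathcal{X}$ where this non-degeneracy holds.
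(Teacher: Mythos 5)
Your proposal is correct, and the overall architecture coincides with the paper's: reduce both parts to termwise estimates on $H_\epsilon$ via the Lipschitz bound \eref{eq:Hdelta-cont} for (ii) and the approximation bound \eref{eq:Hdelta-approx} for (i), and then reduce (i) to a tubular-neighborhood volume estimate $\mathrm{vol}(\{|\theta_0-c_i|<\epsilon\})\leq C\epsilon$. Two points differ. First, your telescoped form $\Phi_\epsilon(\theta)=\kappa_1+\sum_{i=1}^{\mathcal{N}-1}(\kappa_{i+1}-\kappa_i)H_\epsilon(\theta-c_i)$ is a slightly cleaner bookkeeping device than the paper's, which keeps the representation \eref{eq:smooth-levelset} and hence meets each level $c_i$ twice (once with coefficient $\kappa_i$, once with $\kappa_{i+1}$); the resulting constants are the same up to relabelling, and your disjointness observation for small $\epsilon$ is a nice but inessential refinement since the triangle inequality in $L^2(D)$ already suffices. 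Second, and more substantively, the paper isolates the volume bound as a separate result (Lemma \ref{lemma:level-set-app}) proved by constructing, around each point of the level set, an explicit local diffeomorphism $\varphi_{x_0}$ that flattens $\theta_0$ to an affine function of one coordinate, and then applying the area formula on finitely many such charts; you instead extract a uniform lower bound $|\nabla\theta_0|\geq\delta_i$ on a neighborhood of the compact level set and invoke the co-area formula together with a uniform bound on $\mathcal{H}^{d'-1}(\{\theta_0=t\}\cap D)$ for $t$ near $c_i$. Both routes rest on the same two ingredients — non-degeneracy of $\nabla\theta_0$ on $\{\theta_0=c_i\}$ supplied by $H^\beta_\diamond(\mathcal{X})$ plus a compactness argument to make it quantitative (the paper phrases the latter via closedness of $\theta_0$ on $\overline{\mathcal{X}}$, you via $\overline{D}$ being compactly contained in $\mathcal{X}$, a mild assumption the paper leaves implicit) — and both still pass through the implicit/inverse function theorem; the co-area version buys a shorter, more standard-looking computation at the cost of quoting a heavier theorem, while the paper's chart construction is self-contained and elementary.
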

\begin{proof}
    $(i)$ Note first
    \begin{eqnarray}\nonumber \fl \Phi_\epsilon(\theta_0)-\Phi(\theta_0) = \sum_{i=1}^\mathcal{N} \kappa_i [(H_\epsilon(\theta_0-c_{i-1})-H(\theta_0-c_{i-1})) -\\ \nonumber
    (H_\epsilon(\theta_0-c_i) -H(\theta_0-c_{i})) ].\end{eqnarray}
    By the triangle inequality and \eref{eq:Hdelta-approx}
    \begin{eqnarray}\nonumber \fl \|\Phi_\epsilon(\theta_0)-\Phi(\theta_0)\|_{L^2(D)} \leq \sum_{i=1}^{\mathcal{N}} \kappa_i(\|\mathds{1}_{(-\epsilon,\epsilon)}(\theta_0-c_{i-1})\|_{L^2(D)} + \|\mathds{1}_{(-\epsilon,\epsilon)}(\theta_0-c_{i})\|_{L^2(D)}) \end{eqnarray}
    It is clear that $\mathds{1}_{(-\epsilon,\epsilon)}(\theta_0(x)-c_{i-1}) = \mathds{1}_{V_\epsilon}(x)$ with
    \begin{equation}\label{eq:Veps}
        V_\epsilon := \{x\in \mathcal{X}: |\theta_0(x)-c_{i-1}| < \epsilon \}.
    \end{equation}
    By Lemma \ref{lemma:level-set-app} $|V_\epsilon|\leq C(\theta_0,c_{i-1},\mathcal{X})\epsilon$, and hence the wanted result follows by repeated application.
    
    \noindent $(ii)$ Again by the triangle inequality and now \eref{eq:Hdelta-cont} we have
    \begin{eqnarray}\nonumber
        \|\Phi_\epsilon(\theta)-\Phi_\epsilon(\tilde{\theta})\|_{L^2(D)} &= \sum_{i=1}^\mathcal{N} \kappa_i \|H_\epsilon(\theta-c_{i-1})-H_\epsilon(\tilde{\theta}-c_{i-1})\|_{L^2(D)}\\\nonumber
        &\,\,\,\,\,+ \sum_{i=1}^\mathcal{N} \kappa_i \|H_\epsilon(\theta-c_{i})-H_\epsilon(\tilde{\theta}-c_{i})\|_{L^2(D)},\\\nonumber
        &\leq \epsilon^{-1} \sum_{i=1}^\mathcal{N} \kappa_i \|\theta-\tilde{\theta}\|_{L^2(D)},\\\nonumber
        &\leq C(\mathbf{\kappa},\mathcal{N},D) \epsilon^{-1} \|\theta-\tilde{\theta}\|_{L^\infty(D)}. 
    \end{eqnarray}
\end{proof}
For the following consistency result we let
\begin{equation}\label{eq:level-setting}
    \Theta = H_\diamond^\beta(\mathcal{X}), \qquad \mathcal{S}_\beta(M):=\{\theta \in H_\diamond^\beta(\mathcal{X}): \|\theta\|_{H^\beta(\mathcal{X})}\leq M \}.
\end{equation}
We endow $\Theta$ with a prior distribution $\Pi_\theta$ that satisfies Condition \ref{cond:prior} for $\beta>2+d'/2$ such that the covariance kernel associated with the random field is stationary. For simplicity we assume $f(x) = x^\nu$ for some $0<\nu<1$ in Condition \ref{cond:forward-map}. Then we have the following result proved in Appendix \ref{sec:extra-proofs}.


\begin{theorem}\label{thm:level-set-const}
    Suppose Condition \ref{cond:forward-map} is satisfied for $\mathcal{S}_\beta(M)$ as in \eref{eq:level-setting} for $f(x)=Cx^\nu$, $\Phi$ replaced by $\Phi_{n^{-k}}$ for a well-chosen $k$, and where $C$ and $C_{\mathcal{G}}$ are independent of $n$. Let $\gamma_0=\Phi(\theta_0)$ for $\theta_0\in \mathcal{H}\cap \Theta$. Let $\Pi(\cdot|Y)$ be the corresponding sequence of posterior distributions arising for the model \eref{eq:obs} and prior $\Pi=\Phi_{n^{-k}}\Pi_\theta$ as above. Then,
    \begin{equation}
        \|E[\gamma|Y]-\gamma_0\|_{L^2(D)}\rightarrow 0 \quad \textnormal{ in $P_n^{\gamma_0}$-probability}
    \end{equation}
    with rate $n^{-a\nu}$ as $n\rightarrow \infty$ for
    \begin{equation}\label{eq:new-a}
    a = \frac{\eta\delta}{2d\nu \eta + 2\eta\delta + d}.
    \end{equation}
    \end{theorem}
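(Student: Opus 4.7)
My plan is to apply the framework of Theorem \ref{thm:consist-in-Hbeta} with $\Phi$ replaced by the smoothed parametrization $\Phi_\epsilon$ at $\epsilon = n^{-k}$ for $k>0$ to be chosen, and then transfer posterior contraction around $\gamma_0^\epsilon := \Phi_\epsilon(\theta_0)$ back to contraction around the true $\gamma_0 = \Phi(\theta_0)$ by means of Lemma \ref{lemma:4.5}(i). Lemma \ref{lemma:4.5}(ii) shows that $\Phi_\epsilon$ satisfies Condition \ref{cond:unifcont} with exponent $\zeta = 1$ but with $n$-dependent constant $C_\Phi = C\epsilon^{-1} = Cn^k$; Condition \ref{cond:forward-map} is imposed in the hypothesis with $\Phi$ replaced by $\Phi_{n^{-k}}$ and $f(x)=Cx^\nu$; Condition \ref{cond:prior} is supplied by the hypothesis on $\Pi_\theta$, and the stationarity of the covariance together with $\beta>2+d'/2$ ensures via \eref{eq:level-set-critical-points} that prior samples lie almost surely in $\Theta = H^\beta_\diamond(\mathcal{X})$.

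The main technical step is to redo the small-ball and metric-entropy bounds (Lemmas \ref{lemma:small-ball} and \ref{lemma:metric-entropy-reg-sets}) tracking this $n$-dependence. Under the new Hölder constant, the effective $L^\infty(\mathcal{X})$ radius corresponding to a $d_\mathcal{G}$-radius $r_n$ is $\bar r_n = \epsilon r_n^{1/\eta}$ rather than $r_n^{1/\eta}$. Matching the Gaussian small-ball probability $\Pi'_\theta(\|\theta'\|_\infty \leq \rho) \geq e^{-C\rho^{-2d/(2\delta-d)}}$ and the Sobolev covering bound $\log N(\mathcal{S}_\delta(CM), \|\cdot\|_\infty, \bar r_n) \leq C \bar r_n^{-d/\delta}$ against the target $n r_n^2$ via $(\bar r_n n^{1/2-a})^{-2d/(2\delta-d)} \sim n r_n^2$ yields the constraint
\begin{eqnarray}\nonumber a = \frac{\eta(\delta - dk)}{d + 2\eta\delta}.\end{eqnarray}
Theorem \ref{thm:contr} then gives contraction of the posterior around $\gamma_0^\epsilon$ in $d_\mathcal{G}$ at rate $r_n = n^{-a}$, which by the stability $f(x) = Cx^\nu$ of Condition \ref{cond:forward-map} translates to $L^2(D)$-contraction around $\gamma_0^\epsilon$ at rate $Cn^{-a\nu}$.

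To finish, I combine this with Lemma \ref{lemma:4.5}(i) through the triangle inequality,
\begin{eqnarray}\nonumber \|\gamma-\gamma_0\|_{L^2(D)} \leq \|\gamma-\gamma_0^\epsilon\|_{L^2(D)} + \|\gamma_0^\epsilon - \gamma_0\|_{L^2(D)} \leq Cn^{-a\nu} + Cn^{-k/2}.\end{eqnarray}
Choosing $k = 2a\nu$ balances the two contributions, and substituting this back into $a = \eta(\delta - dk)/(d + 2\eta\delta)$ and solving produces exactly $a = \eta\delta/(2d\nu\eta + 2\eta\delta + d)$, the rate claimed. The passage from posterior contraction to convergence of the posterior mean then follows by the Bochner-integral argument of Corollary \ref{corollary:post-mean}, using that $\Phi_{n^{-k}}$ still maps into $L^2_\Lambda(D)$ so the posterior has uniformly bounded $L^2(D)$-second moment.

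The principal obstacle will be the careful bookkeeping of the $n$-dependent Hölder constant $C_\Phi = Cn^k$ throughout the proofs of Lemmas \ref{lemma:small-ball}, \ref{lemma:excess-mass-2} and \ref{lemma:metric-entropy-reg-sets}, and the slightly delicate point that Condition \ref{cond:1.1} must ultimately hold in a $d_\mathcal{G}$-ball around the data-generating $\gamma_0$ rather than around $\gamma_0^\epsilon$. I would handle the latter by inserting a triangle inequality inside the small-ball estimate itself, invoking $\|\gamma_0^\epsilon - \gamma_0\|_{L^2(D)} \leq Cn^{-k/2}$ from Lemma \ref{lemma:4.5}(i) and absorbing it into the radius provided $n^{-k/2} \lesssim r_n^{1/\eta}$; the latter is implied by $k = 2a\nu$ as soon as $\nu \leq 1/\eta$, the natural compatibility range for the forward Hölder and inverse stability exponents in Condition \ref{cond:forward-map}.
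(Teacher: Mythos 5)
Your overall route is the same as the paper's: smooth the parametrization at scale $\epsilon=n^{-k}$, verify Conditions \ref{cond:1.1}--\ref{cond:1.3} keeping track of the $n$-dependent Lipschitz constant $C\epsilon^{-1}$ from Lemma \ref{lemma:4.5}(ii) (which shrinks the effective $L^\infty$ radius to $\bar r_n=n^{-k}r_n^{1/\eta}$ and yields $a=\eta(\delta-dk)/(2\eta\delta+d)$ exactly as in the paper), invoke Theorem \ref{thm:contr}, transfer back to $\gamma_0$ by the triangle inequality together with Lemma \ref{lemma:4.5}(i), balance $n^{-a\nu}$ against $n^{-k/2}$ to get $k=2a\nu$, and conclude for the posterior mean as in Corollary \ref{corollary:post-mean}. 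All of that matches the paper's proof, including the final algebra producing \eref{eq:new-a}.

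The one place you deviate is your treatment of the centering of the small-ball condition, and there your closing claim is backwards. The absorption you propose requires $n^{-k/2}\lesssim r_n^{1/\eta}$, i.e. $k/2\geq a/\eta$; with $k=2a\nu$ this reads $a\nu\geq a/\eta$, i.e. $\eta\nu\geq 1$ --- the \emph{opposite} of the range $\nu\leq 1/\eta$ you state. Since composing the two halves of Condition \ref{cond:forward-map} gives $\|\gamma_1-\gamma_2\|_{L^2(D)}\leq C\|\gamma_1-\gamma_2\|_{L^2(D)}^{\eta\nu}$ and hence $\eta\nu\leq 1$ for any nondegenerate problem, your recentering argument only closes when $\eta\nu=1$ (as in the QPAT example, where $\eta=\nu=1$), and would fail for genuinely Hölder, non-Lipschitz stability. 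The paper avoids committing to this by verifying Condition \ref{cond:1.1} with the ball $B_{\mathcal{G}}(\gamma_0^n,r_n)$ centered at the \emph{smoothed} truth $\gamma_0^n=\Phi_{n^{-k}}(\theta_0)$ and applying Theorem \ref{thm:contr} around that center, performing the recentering only at the level of the final $L^2$-ball via the factor $\mathds{1}_{\|\gamma_0^n-\gamma_0\|_{L^2(D)}\leq \frac12 C_0 r_n^{\nu}}$, where only $n^{-k/2}\lesssim r_n^{\nu}$ (i.e. $k=2a\nu$) is needed. If you insist on centering the small-ball and the tests at the data-generating $\gamma_0$, you must either accept the restriction $\eta\nu=1$ or quantify how a mismatch $d_{\mathcal{G}}(\gamma_0,\gamma_0^n)\gg r_n$ degrades the evidence lower bound and the type-I error of the likelihood-ratio tests; your current sentence does neither.
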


Note that for weak inverse stability estimates, i.e. $\nu$ small, the obtained contraction rate approaches the usual rate \eref{eq:contraction-rate-a}.

\section{Quantitative photoacoustic tomography problem}
\label{section:5}
To test the convergence of the inclusion detection methods, we consider the following test problem in quantitative photoacoustic tomography, see \cite{tarvainen2012,bal2011,kuchment2012}.
The diffusion approximation in QPAT models light transport in a scattering medium according to an elliptic equation
\begin{equation}\label{eq:goveq}
\eqalign{
-\nabla \cdot \mu \nabla u + \gamma u &=0, \text { in } D, \\
u = g, \text { on } \partial D,
}
\end{equation}
where $\mu\in L^2_{\Lambda_\mu}(D)$, $\Lambda_\mu>0$, and $\gamma \in L^2_\Lambda(D)$ are the optical diffusion and absorption parameters, respectively. The prescribed Dirichlet boundary condition $u=g$ defines the source of incoming radiation. It is well-known that \eref{eq:goveq} has a unique solution $u\in H^1(D)$ for each $g\in H^{1/2}(\partial D)$ and for any nonzero source function $h\in H^{-1}(D)$ of \eref{eq:goveq}. Furthermore, we have the estimate
\begin{equation}\label{eq:ellip-estimate}
   \|u\|_{H^1(D)}\leq C(\Lambda_\mu,D)(\|g\|_{H^{1/2}(\partial D)}+\|h\|_{H^{-1}(D)}), 
\end{equation}
see for example \cite[Chapter 6]{evans1998}. QPAT aims to reconstruct the optical parameters given the absorbed optical energy density map $H$, 
which equals the product $\gamma u$ up to some proportionality constant that models the photoacoustic effect. In our simplified approach, we aim to invert the forward map 
\begin{eqnarray}\nonumber 
	\mathcal{G}: \gamma \mapsto H := \gamma u, \quad \mathcal{G}:L^2_\Lambda(D) \rightarrow L^2(D),
\end{eqnarray}
for a fixed $\mu \in L^2_\Lambda(D)$. For smoothness and physical accuracy we assume
\begin{equation}\label{eq:assump-on-f}
    g\in H^{3/2}(\partial\Omega) \quad \textnormal{ and } \quad 0<g_{\mathrm{min}}\leq g \leq g_{\mathrm{max}}.
\end{equation}
This setting allows a simple inverse stability estimate. First we have the following continuity result of $\mathcal{G}$.
\begin{lemma}\label{lemma:forward-reg-qpat}
	Let $H_1:=\gamma_1u_1$ and $H_2:=\gamma_2u_2$ for solutions $u_1$ and $u_2$ of \eref{eq:goveq} corresponding to $\gamma=\gamma_1$,  $\gamma=\gamma_2$ in $L^2_\Lambda(D)$ and $g$ satisfying \eref{eq:assump-on-f}. Then there exists a constant $C$ such that
	\begin{eqnarray}\nonumber 
		\|H_1-H_2\|_{L^2(D)} \leq C \|\gamma_1-\gamma_2\|_{L^2(D)},
	\end{eqnarray}
	where $C$ depends on $\Lambda_\mu$, $D$ and $g_{\mathrm{max}}$.
\end{lemma}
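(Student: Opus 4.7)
The plan is to reduce the bound on $H_1-H_2$ to two pieces by the algebraic identity
\begin{equation}\nonumber
H_1-H_2 = \gamma_1 u_1 - \gamma_2 u_2 = (\gamma_1-\gamma_2)u_1 + \gamma_2(u_1-u_2),
\end{equation}
and then control each piece separately. The first piece is $L^2$-bounded as soon as we have an $L^\infty$ bound on $u_1$. The second piece requires an $L^2$ estimate on the difference $w:=u_1-u_2$ in terms of $\|\gamma_1-\gamma_2\|_{L^2(D)}$, which is precisely the kind of bound the elliptic estimate \eref{eq:ellip-estimate} provides once we identify the PDE solved by $w$.

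First I would establish that solutions to \eref{eq:goveq} inherit the bounds of the boundary data. Because $\gamma\geq \Lambda^{-1}>0$, the source vanishes, and $0<g_{\min}\leq g\leq g_{\max}$, the weak maximum principle yields $g_{\min}\leq u_i\leq g_{\max}$ a.e., giving in particular $\|u_i\|_{L^\infty(D)}\leq g_{\max}$. This immediately controls the first term:
\begin{equation}\nonumber
\|(\gamma_1-\gamma_2)u_1\|_{L^2(D)}\leq g_{\max}\|\gamma_1-\gamma_2\|_{L^2(D)}.
\end{equation}

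Next I would derive the PDE for $w=u_1-u_2$ by subtracting the two instances of \eref{eq:goveq}, obtaining
\begin{equation}\nonumber
-\nabla\cdot\mu\nabla w + \gamma_1 w = (\gamma_2-\gamma_1)u_2 \text{ in }D,\qquad w=0 \text{ on }\partial D.
\end{equation}
The right-hand side is in $L^2(D)\hookrightarrow H^{-1}(D)$, and by the $L^\infty$ bound on $u_2$ it satisfies $\|(\gamma_2-\gamma_1)u_2\|_{L^2(D)}\leq g_{\max}\|\gamma_1-\gamma_2\|_{L^2(D)}$. Invoking \eref{eq:ellip-estimate} with zero boundary data then gives
\begin{equation}\nonumber
\|w\|_{L^2(D)}\leq \|w\|_{H^1(D)}\leq C(\Lambda_\mu,D)\,g_{\max}\|\gamma_1-\gamma_2\|_{L^2(D)}.
\end{equation}
Combining with the pointwise bound $\gamma_2\leq \Lambda$ to estimate $\|\gamma_2(u_1-u_2)\|_{L^2(D)}\leq \Lambda\|w\|_{L^2(D)}$, and adding the two contributions, yields the desired Lipschitz estimate with $C$ depending on $\Lambda_\mu$, $\Lambda$, $D$ and $g_{\max}$.

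The main (minor) obstacle is verifying the maximum principle in the current weak setting: $\mu$ is only in $L^2_{\Lambda_\mu}(D)$ and $\gamma$ in $L^2_\Lambda(D)$, so one must argue by testing the weak formulation against $(u-g_{\max})_+$ and $(g_{\min}-u)_+$, using $\gamma\geq \Lambda^{-1}>0$ and the ellipticity of $\mu$, to conclude both truncations vanish. Beyond that, everything reduces to the elliptic a priori estimate already assumed.
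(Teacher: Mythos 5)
Your proposal is correct and follows essentially the same route as the paper: both derive the boundary value problem satisfied by $u_1-u_2$ with source $u_2(\gamma_2-\gamma_1)$, bound the source in $H^{-1}(D)$ via the maximum principle bound $\|u_i\|_{L^\infty(D)}\leq g_{\max}$, apply the elliptic estimate \eref{eq:ellip-estimate}, and conclude through an algebraic splitting of $H_1-H_2$ (yours is the symmetric variant $(\gamma_1-\gamma_2)u_1+\gamma_2(u_1-u_2)$ versus the paper's $\gamma_1(u_1-u_2)+(\gamma_1-\gamma_2)u_2$, which is immaterial). Your remark that the constant also depends on $\Lambda$ through the pointwise bound on $\gamma_2$ is accurate and matches what the paper's proof implicitly uses.
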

\begin{proof}
    We note that $u_1-u_2$ solves
    \begin{eqnarray}\nonumber 
    -\nabla \cdot \mu \nabla (u_1-u_2) + \gamma_1 (u_1-u_2) &=u_2(\gamma_2-\gamma_1) \text { in } D, \\ \nonumber
    u_1-u_2 = 0  \text { on } \partial D.
    \end{eqnarray}
    Then by \eref{eq:ellip-estimate} and the maximum principle \cite[Theorem 8.1]{gilbarg83}
    \begin{eqnarray}\nonumber
        \|u_1-u_2\|_{H^1(D)}\leq \|u_2(\gamma_2-\gamma_1)\|_{H^{-1}(D)} \leq f_{\mathrm{max}}\|\gamma_1-\gamma_2\|_{L^2(D)}.
    \end{eqnarray}
    Since $H_1-H_2=\gamma_1(u_1-u_2)+(\gamma_1-\gamma_2)u_2$ we have
    \begin{eqnarray}\nonumber
        \|H_1-H_2\|_{L^2(D)}&\leq \|\gamma_1(u_1-u_2)\|_{L^2(D)} + \|(\gamma_1-\gamma_2)u_2\|_{L^2(D)},\\\nonumber
        &\leq f_{\mathrm{max}}(1+M) \|\gamma_1-\gamma_2\|_{L^2(D)}.
    \end{eqnarray}
\end{proof}

\begin{lemma}\label{lemma:stability-estimate-qpat}
    Under the same assumptions of Lemma \ref{lemma:forward-reg-qpat}, there exists a constant $C>0$ such that
	\begin{equation}\label{eq:inverse-stability-estimate-qpat}
		\|\gamma_1-\gamma_2\|_{L^2(D)} \leq C \|H_1-H_2\|_{L^2(D)}.
	\end{equation}
\end{lemma}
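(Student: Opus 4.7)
The plan is to exploit the identity $H_i = \gamma_i u_i$ to eliminate $\gamma_i$ from \eref{eq:goveq}, thereby reducing the stability estimate to an energy estimate for $u_1 - u_2$. Substituting $\gamma_i u_i = H_i$ directly into \eref{eq:goveq} yields $-\nabla\cdot\mu\nabla u_i = -H_i$ in $D$ with $u_i = g$ on $\partial D$. Subtracting the two equations for $i=1,2$ shows that $w := u_1 - u_2$ solves
\begin{eqnarray}\nonumber
-\nabla\cdot\mu\nabla w = H_2 - H_1 \text{ in } D, \qquad w = 0 \text{ on } \partial D.
\end{eqnarray}
The standard Lax--Milgram estimate on $H^1_0(D)$ for the uniformly elliptic operator $-\nabla\cdot\mu\nabla$ (equivalently, the estimate \eref{eq:ellip-estimate} applied with $\gamma\equiv 0$ after absorbing the zeroth-order term into the source) then gives
\begin{eqnarray}\nonumber
\|w\|_{L^2(D)} \leq C(\Lambda_\mu, D)\|H_1 - H_2\|_{L^2(D)}.
\end{eqnarray}

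Next, I write $H_1 - H_2 = \gamma_1 u_1 - \gamma_2 u_2 = (\gamma_1-\gamma_2)u_1 + \gamma_2(u_1-u_2)$, which rearranges to $(\gamma_1-\gamma_2)u_1 = (H_1-H_2) - \gamma_2 w$. Dividing pointwise by $u_1$ and taking $L^2(D)$-norms yields
\begin{eqnarray}\nonumber
\|\gamma_1-\gamma_2\|_{L^2(D)} \leq \|1/u_1\|_{L^\infty(D)}\bigl(\|H_1-H_2\|_{L^2(D)} + \Lambda\|w\|_{L^2(D)}\bigr),
\end{eqnarray}
so combining with the previous display gives \eref{eq:inverse-stability-estimate-qpat}, \emph{provided} I can secure a uniform lower bound $u_1 \geq c_{\mathrm{min}} > 0$ on $D$.

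This uniform positivity of $u_1$ is the main obstacle, since the weak maximum principle invoked in Lemma \ref{lemma:forward-reg-qpat} only yields the upper bound $u_i \leq g_{\mathrm{max}}$. To obtain the lower bound I would compare $u_1$ with the solution $\tilde v \in H^1(D)$ of the reference problem $-\nabla\cdot\mu\nabla \tilde v + \Lambda \tilde v = 0$ in $D$, $\tilde v = g_{\mathrm{min}}$ on $\partial D$. Since $0 \leq \gamma_1 \leq \Lambda$ and $u_1 \geq 0$ (from the weak maximum principle applied to $-u_1$ using $g \geq 0$), the function $u_1 - \tilde v$ is a supersolution of $-\nabla\cdot\mu\nabla + \Lambda$ with nonnegative boundary values, so $u_1 \geq \tilde v$ in $D$. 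Because $\tilde v$ is a fixed, nontrivial $H^1$ solution with strictly positive boundary data, the strong maximum principle together with continuity up to $\partial D$ gives $\tilde v \geq c_{\mathrm{min}} > 0$ on $\overline{D}$ for a constant $c_{\mathrm{min}}$ depending only on $\Lambda$, $\Lambda_\mu$, $D$ and $g_{\mathrm{min}}$. Substituting this lower bound into the previous inequality finishes the proof, with the final constant depending only on $\Lambda, \Lambda_\mu, D, g_{\mathrm{min}}$ and $g_{\mathrm{max}}$.
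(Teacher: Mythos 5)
Your proof is correct and follows the same overall architecture as the paper's: reduce to the Poisson problem $-\nabla\cdot\mu\nabla(u_1-u_2)=H_2-H_1$ with zero boundary data to control $\|u_1-u_2\|_{L^2(D)}$, then use an algebraic identity for $H_1-H_2$ together with a uniform positive lower bound on one of the solutions. (The paper divides by $u_2$ rather than $u_1$; this is immaterial by symmetry.) The genuine difference lies in how the positivity is obtained, which is the technical heart of the lemma. The paper works directly with $u_2$: it establishes global H\"older continuity via the trace theorem and \cite[Theorem 8.29]{gilbarg83}, and then invokes the Harnack-chain argument of \cite[Lemma 12]{choulli2021} to produce the bound \eref{eq:lower-bound-u2}, taking care that the resulting constant is uniform over all absorption coefficients in $L^2_\Lambda(D)$. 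You instead compare $u_1$ with a single reference solution $\tilde v$ of $-\nabla\cdot\mu\nabla\tilde v+\Lambda\tilde v=0$, $\tilde v=g_{\mathrm{min}}$ on $\partial D$: the inequality $(-\nabla\cdot\mu\nabla+\Lambda)(u_1-\tilde v)=(\Lambda-\gamma_1)u_1\ge 0$ (valid since $u_1\ge 0$ and $\gamma_1\le\Lambda$) and the weak maximum principle give $u_1\ge\tilde v$, so positivity needs to be checked only for the fixed function $\tilde v$. This is a clean trade: you still need the same machinery (De Giorgi--Nash continuity up to the smooth boundary and the strong maximum principle/Harnack inequality for divergence-form operators with bounded measurable coefficients) to conclude $\tilde v\ge c_{\mathrm{min}}>0$ on $\overline D$, but since $\tilde v$ does not depend on $\gamma_1$ the uniformity of the lower bound over the class $L^2_\Lambda(D)$ is automatic, whereas the paper must track this uniformity through the H\"older and Harnack constants. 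The only caveat is that your $c_{\mathrm{min}}$ as written depends on the fixed diffusion $\mu$ itself rather than only on its ellipticity bounds $\Lambda_\mu$; if uniformity in $\mu$ were wanted one would apply the Harnack-chain argument to $\tilde v$ as the paper does to $u_2$, but for the lemma as stated ($\mu$ fixed) this is harmless.
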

\begin{proof}
    See also \cite[Theorem 3.1]{bal2011} and \cite[Theorem 1.2]{choulli2021}. Note 
    $u_1-u_2\in H^1_0(D)$ solves
    \begin{eqnarray}\nonumber 
    -\nabla \cdot \mu \nabla (u_1-u_2) &= H_2-H_1 \text { in } D, \\\nonumber 
    u_1-u_2 = 0 \text { on } \partial D,
    \end{eqnarray}
    hence by elliptic regularity 
    \begin{equation}\label{eq:ellipticregest}
        \|u_1-u_2\|_{L^2(D)}\leq C(\Lambda_\mu,D)\|H_1-H_2\|_{L^2(D)}.
    \end{equation}
    Note by the trace theorem, see \cite{lions1972}, for $g$ as in \eref{eq:assump-on-f} there exists $v\in H^2(\Omega)$ such that $u_2-v\in H^1_0(\Omega)$. By a Sobolev embedding $v\in C^{0,\alpha_0}(\overline{D})$ for some $\alpha_0>0$ depending on $d=2,3$. Theorem 8.29 and the remark hereafter in \cite{gilbarg83} states that $u_2\in C^\alpha(\overline{D})$ for some $\alpha=\alpha(d,\Lambda_\mu,\Lambda,D,\alpha_0)>0$ and that
    \begin{eqnarray}\nonumber 
        \|u_2\|_{C^\alpha(\overline{D})} \leq U_1(\sup_{x\in D}|u(x)|+U_2)=:U,
    \end{eqnarray}
    where $U_1=M_1(d,\Lambda_\mu,\Lambda,D,\alpha_0)>0$ and $U_2=U_2(D,g)$. By the maximum principle \cite[Theorem 8.1]{gilbarg83} we can collect the right-hand side to one constant $U=U(U_1,U_2,g_{\mathrm{max}})>0$. Now using the argument in \cite[Lemma 12]{choulli2021}, which in return uses the Harnack inequality \cite[Corollary 8.21]{gilbarg83} we conclude
    \begin{equation}\label{eq:lower-bound-u2}
        u_2 \geq m,
    \end{equation}
    where $m=m(d,\Lambda_\mu,\Lambda,D,U,\alpha,g_{\mathrm{min}})$ is a constant. Note
    \begin{eqnarray}\nonumber
        \gamma_1-\gamma_2 &= \gamma_1(1-\frac{u_1}{u_2})+\frac{1}{u_2}(\gamma_1u_1-\gamma_2u_2),\\\nonumber
        &= \frac{\gamma_1}{u_2}(u_2-u_1) + \frac{1}{u_2}(H_1-H_2).
    \end{eqnarray}
    Combining this with \eref{eq:ellipticregest} and \eref{eq:lower-bound-u2} we have
    \begin{eqnarray}\nonumber
        \|\gamma_1-\gamma_2\|_{L^2(D)}&\leq C(m,\Lambda,\Lambda_\mu,D) \|H_1-H_2\|_{L^2(D)}. 
    \end{eqnarray}
\end{proof}
We note that $\mathcal{G}$ satisfies Condition \ref{cond:forward-map} for $\eta=1$ and $f(x)=x$. We also note that $\mathcal{Y}=L^2(D)$ is a separable Hilbert space with an orthonormal basis consisting of the eigenfunctions of the Dirichlet Laplacian on $D$. We conclude that this problem is suitable as a test problem, and that Theorem \ref{thm:star}
 and \ref{thm:level-set-const} apply. In Section \ref{sec:conclusions} we discuss other suitable inverse problems.
\section{Numerical results}
\label{section:6}

We discuss our numerical tests in detecting inclusions for the QPAT tomography problem using the pCN algorithm of Section \ref{sec:mcmc} and the parametrizations of Section \ref{section:4}. For simplicity we assume $D=B(0,1)$, the two-dimensional unit disk. 

\subsection{Observation model}
As an approximation to the continuous observation model \eref{eq:obs} for the numerical experiments we consider observing
\begin{equation}\label{eq:observation-numerics}
    Y_k = \langle \mathcal{G}(\gamma), e_k \rangle_{L^2(D)} + \varepsilon \xi_k, \qquad k=1,\ldots,N_{d}
\end{equation}
where $\{e_k\}_{k=1}^\infty$ is an orthonormal basis of $L^2(D)$ consisting of the eigenfunctions of the Dirichlet Laplacian on $D$ and $N_{d}\in \mathbb{N}$ is a suitable number. This observation $\mathbf{Y}=\{Y_k\}_{k=1}^{N_d}$ is the sequence of coefficients of the projection of $Y$ from \eref{eq:obs} to the span of $\{e_k\}_{k=1}^{N_{d}}$. As $N_{d}\rightarrow \infty$ observing $\mathbf{Y}$ is equivalent to observing $Y$, see for example \cite[Theorem 26]{abraham2019}. Besides being a convenient approximation, this model has numerical relevance: there exists closed-form reconstruction formulas for  $\langle \mathcal{G}(\gamma), e_k \rangle_{L^2(D)}$ in the first part of the photoacoustic problem, see \cite{kunyansky2007,agranovsky2007}.
The likelihood function then takes the form
\begin{eqnarray}\nonumber p^\gamma_\varepsilon(\mathbf{Y}):=\exp\left(-\frac{1}{\varepsilon^2}\sum_{k=1}^{N_{d}} (Y_k-\langle \mathcal{G}(\gamma), e_k \rangle_{L^2(D)})^2 \right).\end{eqnarray}
\subsection{Approximation of the forward map}\label{sec:approx-forward}
We approximate the forward map using the Galerkin finite element method (FEM) with piecewise linear basis functions $\{\psi_k\}_{k=1}^{N_m}$ over a triangular mesh of $N_m$ vertices and $N_e$ elements, see \cite{tarvainen2012,hänninen2018}. 
When $\gamma\in L^2_\Lambda(D)$ is discontinuous and continuous, we approximate it by
\begin{eqnarray}\nonumber \tilde{\gamma}_{N_e}=\sum_{k=1}^{N_e} \tilde{\gamma}_k \mathds{1}_{E_k}, \quad \textnormal{and} \quad \bar{\gamma}_{N_m}=\sum_{k=1}^{N_m} \bar{\gamma}_k \psi_k,\end{eqnarray}
respectively. Here $E_k$ denotes the $k$'th element of the triangular mesh. That gives us two approximations of the forward map:
\begin{eqnarray}\nonumber \tilde{\mathcal{G}}_{N_e}(\gamma) := \tilde{\gamma}_{N_e}\tilde{u} \quad \textnormal{and} \quad \bar{\mathcal{G}}_{N_m}(\gamma) := \bar{\gamma}_{N_m}\bar{u},\end{eqnarray}
where $\tilde{u}$ is the FEM solution corresponding to $\tilde{\gamma}_{N_e}$ and $\bar{u}_{N_m}$ is the FEM solution corresponding to $\bar{\gamma}$. For the smooth level set parametrization we use $\bar{\mathcal{G}}_{N_m}$ with $N_m = 12708$ nodes, while for the star-shaped set parametrization we use $\tilde{\mathcal{G}}_{N_e}$ with $N_e = 25054$ elements.

We compute $\{e_k\}_{k=1}^{N_{d}}$ by solving the generalized eigenvalue problem arising from the FEM formulation of the Dirichlet eigenvalue problem with the \textsc{Matlab} function $\texttt{sptarn}$. Then $\langle \mathcal{G}(\gamma),e_k\rangle_{L^2(D)}$ is approximated using the mass matrix for $k=1,\ldots,N_d$ with $N_d=N_{\mathrm{freq}}(N_{\mathrm{freq}}+1)$ and $N_{\mathrm{freq}} = 13$. 

\subsection{Phantom, noise and data}\label{sec:phantom}
The phantom we seek to recover consists of two inclusions:
\begin{eqnarray}\nonumber \gamma_0 = \kappa_1 + \kappa_2\mathds{1}_{A_1} + \kappa_3\mathds{1}_{A_2},\end{eqnarray}
where $(\kappa_1,\kappa_2,\kappa_3)=(0.1, 0.4, 0.2)$ and $A_1,A_2$ are two star-shaped sets  described by their boundaries:
\begin{eqnarray}\nonumber \fl \partial A_1 = (-0.4,0.4) + \{0.18(\cos(\vartheta)+0.65\cos(2\vartheta), 1.5\sin(\vartheta)), 0\leq \vartheta \leq 2\pi\},\end{eqnarray}
\begin{eqnarray}\nonumber \fl \partial A_2 = (0.4,-0.4) + \{\varphi(\vartheta)(\cos(\vartheta),\sin(\vartheta))\},\end{eqnarray}
where $\varphi(\vartheta) = 0.12\sqrt{0.8+0.8(\cos(4\vartheta)-1)^2}$, see Figure \ref{fig:optical-parameters}. We compute and fix the optical diffusion parameter to $\mu = \frac{1}{2}\frac{1}{\gamma_0+\mu_s(1-0.8)}$ following \cite{hänninen2018}. Here the scattering parameter $\mu_s$ equals $100\gamma_0$ smoothed with a Gaussian smoothing kernel of standard deviation $15$ using the \textsc{Matlab} function \texttt{imgaussfilt}.
\begin{figure}[tbp!]
    \centering
    \includegraphics[width=0.9\textwidth]{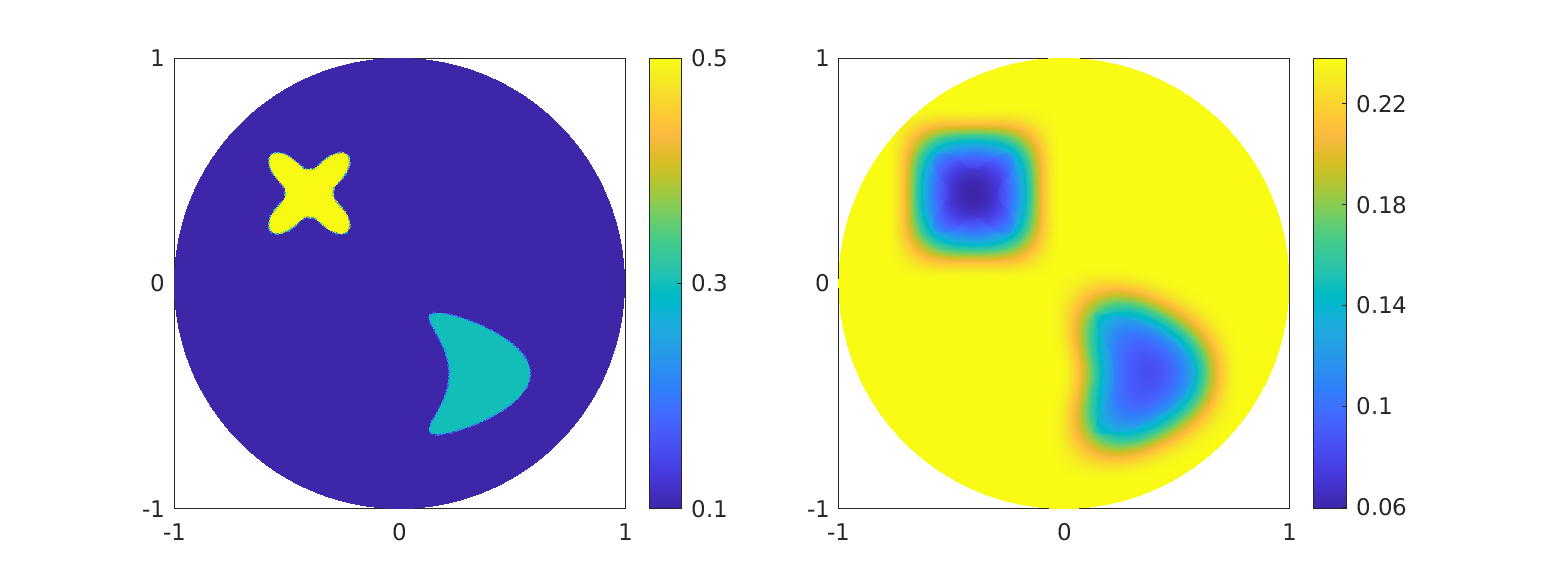}
    \caption{Simulated absorption $\gamma_0$ (left image) and diffusion $\mu$ (right image) distributions.}
    \label{fig:optical-parameters}
\end{figure}
We choose an illumination $g$ that is smooth and positive on $\partial D$ defined by
\begin{eqnarray}\nonumber g(x) = w_{m_1,s_1}(x) + w_{m_2,s_3}(x) + w_{m_2,s_3}(x),\end{eqnarray}
where
\begin{eqnarray}\nonumber w_{m,s}(x)=s \exp\left(-2\|x-m\|^2\right)\end{eqnarray}
and $m_1 = 0.5(\sqrt{2},\sqrt{2})$, $m_2= 0.5(-\sqrt{2},\sqrt{2})$, $m_3=-m_1$, $s_1 = 10$, $s_2=2$ and $s_3=5$. This is a superposition of three Gaussians, which illuminates the target well.
\begin{figure}[tbp!]
    \centering
    \includegraphics[width=0.9\textwidth]{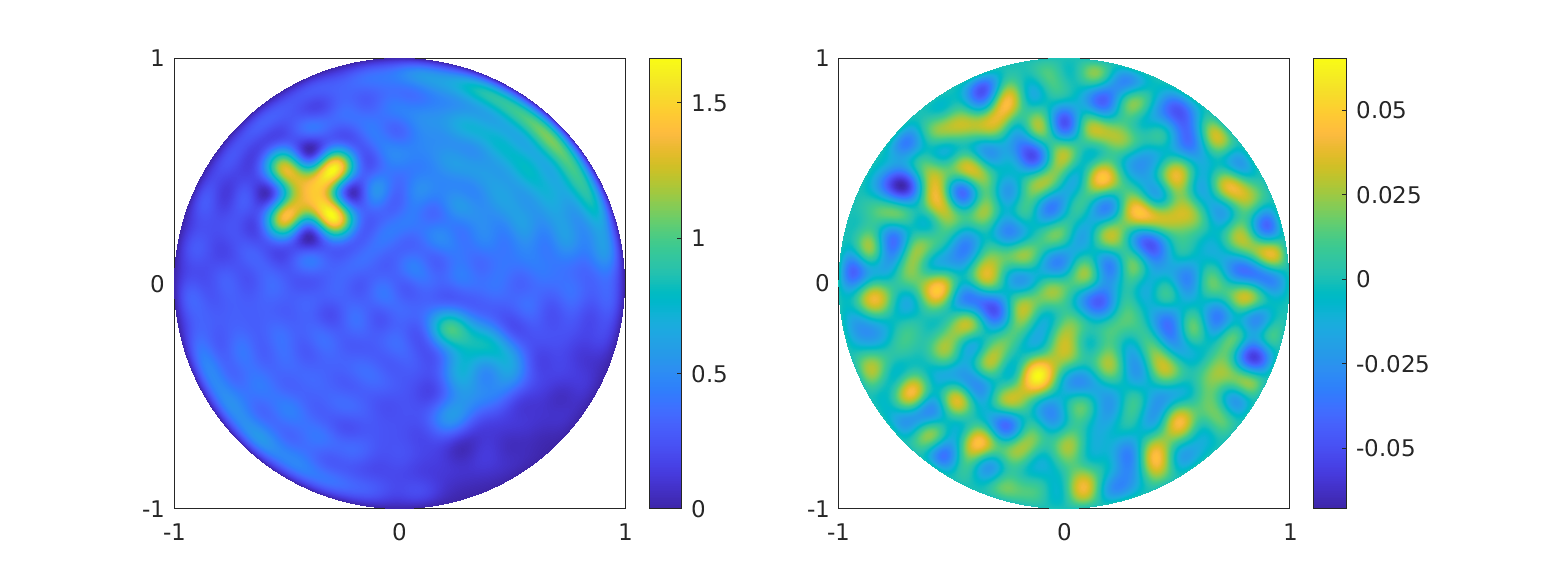}
    \caption{Projection of absorbed optical energy density $H$ corresponding to phantom of Section \ref{sec:phantom} (left image) and of the white noise expansion (right image) projected onto the span of $\{e_k\}_{k=1}^{N_d}$.} 
    \label{fig:data}
\end{figure}
We simulate data $\mathbf{Y}$ as in \eref{eq:observation-numerics} by computing $\tilde{\mathcal{G}}_{N_{e_0}}(\gamma_0)$ on a fine mesh of $N_{e_0} = 75624$ elements and $N_{m_0}=38127$ nodes. The corresponding projection can be seen in Figure \ref{fig:data}. We choose $\varepsilon>0$ such that the relative error
\begin{eqnarray}\nonumber \textnormal{relative error} = \frac{\varepsilon \sqrt{\sum_{k=1}^{N_d} \xi_k^2}}{\sqrt{\sum_{k=1}^{N_d} \langle \mathcal{G}(\gamma_0), e_k \rangle_{L^2(D)}^2}}\end{eqnarray}
is in the range $(1, 2, 4, 8, 16)\cdot 10^{-2}$.
 See Figure \ref{fig:data} for a realization of the white noise expansion \eref{eq:white-noise} projected to the $N_d$ first orthonormal vectors $\{e_k\}_{k=1}^{N_d}$ and scaled so that it accounts for $4\%$ relative noise.\\ 

To estimate the approximation error, we compute the vector 
\begin{eqnarray}\nonumber V_j = [\langle\tilde{\mathcal{G}}_{N_0}(\gamma_j), e_k\rangle_{L^2(D)}-\langle\tilde{\mathcal{G}}_{N_e}(\gamma_j), e_k \rangle_{L^2(D)}]_{k=1}^{N_d}\end{eqnarray}
for $\gamma_j$, $j=1,\ldots,200$, samples of the prior for the level set parametrization introduced in Section \ref{sec:level-set-prior} below. We then compute $\varepsilon_{\mathrm{level}} = \sqrt{\frac{\mathrm{tr}(C)}{N_d}}$, where $\mathrm{tr}(C)$ is the trace of the sample covariance matrix $C$ of the vectors $V_j$. For this choice $N(0,\varepsilon_{\mathrm{level}}^2 I)$ minimizes the Kullback-Leibler distance to $N(0,C)$, see \cite{cotter2013}. We compute $\varepsilon_{\mathrm{star}}$ in the same way using $\bar{\mathcal{G}}_{N_{m_0}}$, $\bar{\mathcal{G}}_{N_m}$ and samples of the prior for the star-shaped set parametrization in Section \ref{sec:level-set-prior}.

\subsection{Choice of prior}
\subsubsection{Star-shaped sets}\label{sec:star-shape-prior}
To mirror the theoretical results of Theorem \ref{thm:star} for the phantom above, we consider a product distribution in $H^\beta(\mathbb{T})\times H^\beta(\mathbb{T})$.
To this end, consider the usual $L^2([0,2\pi])$ real orthonormal basis of trigonometric functions $\{\phi_\ell\}_{\ell\in \mathbb{Z}}$, i.e. $\phi_1(x) = \cos(2\pi x)$ and $\phi_{-1}(x) = \sin(2\pi x)$. 
Consider the Karhunen-Loeve expansion
\begin{equation}\label{eq:random-series}
    \theta_i = \bar{\theta} + \sum_{\ell \in \mathbb{Z}} g_{\ell,i} w_{\ell}\phi_\ell, \qquad g_{\ell,1},g_{\ell,2} \stackrel{i.i.d}{\sim} N(0,1),
\end{equation}
for $i=1,2$ with $w_\ell = q(\tau^2+|\ell|^2)^{-\delta/2}$ for $\delta>1/2$, $\tau\in \mathbb{R}$, $q>0$ and some constant $\bar{\theta}\in \mathbb{R}$. Note $\theta_i$ has a Laplace-type covariance operator, and \eref{eq:random-series} can be interpreted as the solution of a stochastic PDE \cite{lindgren2011}. Then $\theta_1,\theta_2\in H^\beta(\mathbb{T})$ almost surely, see \cite{dashti2017}. According to Theorem I.23 in \cite{ghosal2017} and the definition of Sobolev spaces \cite[Section 4.3]{taylor2011},   $\mathcal{H}=H^{\delta}(\mathbb{T})$ with equivalent norms, i.e. the prior distribution of \eref{eq:random-series} satisfies Condition \ref{cond:prior}.
We take as $\Pi$ the distribution of
\begin{equation}\label{eq:push-forward-prior}
    \gamma = \Phi(\theta_1,\theta_2) = \kappa_1 + \kappa_2 \mathds{1}_{A(x_1,\theta_1)} + \kappa_3 \mathds{1}_{A(x_2,\theta_2)},
\end{equation}
for $(\kappa_1,\kappa_2,\kappa_3)=(0.1,0.2,0.4)$, $x_1=(0.37,-0.43)$ and $x_2 = (-0.44,0.36)$.
 In practice, we compute \eref{eq:random-series} truncated at $|\ell|\leq N=12$. We do not rescale as the theoretical estimates demand. Instead, we handpick a suitable $q$ for each noise level. We use \texttt{inpoly} \cite{kepner2020} to efficiently project $\gamma$ to $\{\mathds{1}_{E_k}\}_{k=1}^{N_e}$.
 We refer to Figure \ref{fig:prior-samples} for an example of a sample from this prior.

\subsubsection{Level sets}\label{sec:level-set-prior}
For the level set parametrization, we consider a prior distribution in $H^\beta(\tilde{\mathbb{T}}^2)$. Here $\tilde{\mathbb{T}}^2$ is the torus corresponding to the square $[-m,m]^2$, where we choose $m=1.1$, since it is recommended in for example \cite{khristenko2019} to embed $D$ in a larger domain to avoid boundary effects. 
Here, we consider the usual $L^2([-m,m]^2)$ real orthonormal basis of trigonometric functions $\{\phi_\ell\}_{\ell\in \mathbb{Z}^2}$. We let
\begin{equation}\label{eq:KL-level}
    \theta = \sum_{\ell\in \mathbb{Z}^2} g_\ell w_\ell \phi_\ell, \qquad g_\ell \stackrel{i.i.d}{\sim} N(0,1),
\end{equation}
with $w_\ell = q(\tau^2+|\ell|^2)^{-\delta/2}$ for $\delta > 1$, $\tau \in \mathbb{R}$ and $q>0$.
Similar to above, the series exists almost surely as an element in $H^\beta(\tilde{\mathbb{T}}^2)$, see \cite{dashti2017}. The corresponding RKHS is $\mathcal{H}=H^{\delta}(\tilde{\mathbb{T}}^2)$, see \cite{ghosal2017}. We choose $\mathcal{X}=D$ and consider the linear, bounded and surjective restriction $r: H^\beta(\tilde{\mathbb{T}}^2)\rightarrow H^\beta(D)$, see \cite[Section 4.4]{taylor2011}. Then $r(\theta)$ is a Gaussian random element in $H^\beta(D)$, and its RKHS is $r(\mathcal{H})=H^\delta(D)$, see \cite[Exercise 2.6.5]{gine2016}. We take as $\Pi$ the distribution of 
\begin{equation}\label{eq:push-forward-level}
    \gamma = \Phi_{\epsilon}(\theta) = \sum_{i=1}^3\kappa_i[H_\epsilon(\theta-c_{i-1})-H_\epsilon(\theta-c_i)]
\end{equation}
for $(\kappa_1,\kappa_2,\kappa_3)=(0.3,0.1,0.5)$ and $(c_1,c_2) = (-1,1)$. In practice, we truncate \eref{eq:KL-level} at $\max(|\ell_1|,|\ell_2|)\leq 4$. Also, we hand-pick $\epsilon>0$ and $q>0$ for each noise level.
See Figure \ref{fig:prior-samples} for a sample of this prior.

\begin{figure}[tbp!]
    \centering
    \includegraphics[width=\textwidth]{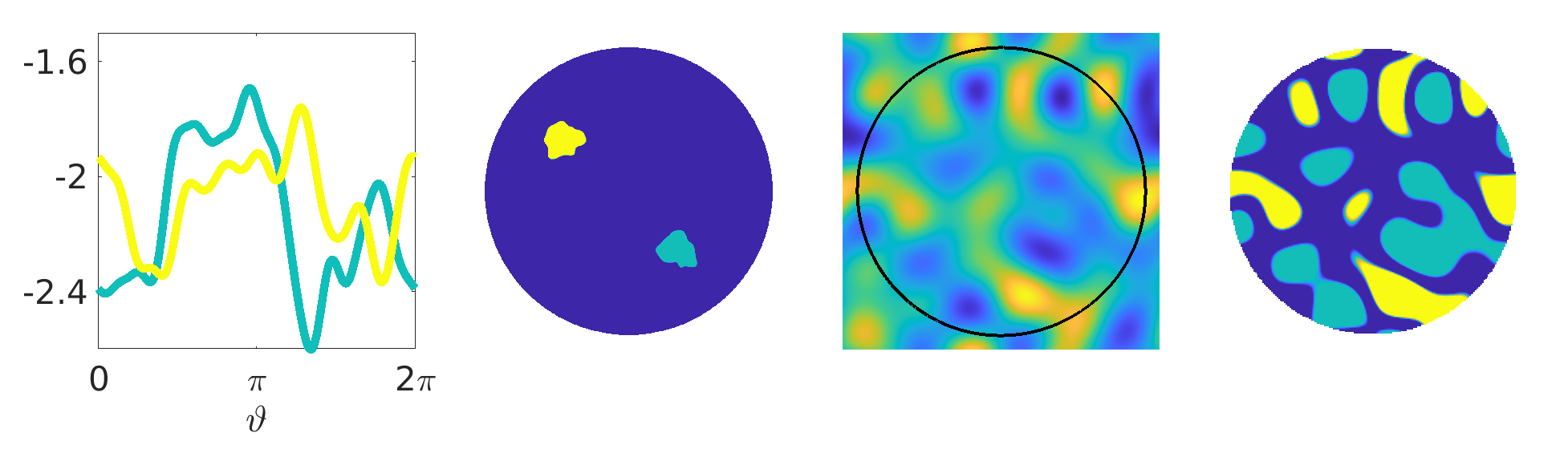}
    \caption{Samples from the star-shaped and level set priors. From left to right: Sample of $\theta_1$ and $\theta_2$ in \eref{eq:random-series} for $|\ell|\leq 12$, $\delta=2.5$, $\tau =4$, $q=10^{3/2}/5$ and $\bar{\theta}=-2$ (first image). Sample of $\Pi$ corresponding to $\Phi(\theta_1,\theta_2)$ in \eref{eq:push-forward-prior} (second image). Sample of $\theta$ in \eref{eq:KL-level} for $\max(|\ell_1|,|\ell_2|)\leq 4$, $\delta = 1.2$, $\tau = 10$, $q=5$ (third image). Sample of $\Pi$ corresponding to $\Phi_\epsilon(\theta)$ in \eref{eq:push-forward-level} for $\epsilon=0.1$ (fourth image).}
    \label{fig:prior-samples}
\end{figure}

\subsection{Results}
In this section we present the numerical results using the star-shaped and level set parametrizations in different noise regimes. 
We use the algorithm described in Section \ref{sec:mcmc} with the pCN method implemented with an adaptive stepsize targeting a 30\% acceptance rate. The initial stepsize is denoted by $b$. For an example of an implementation of this sampling method, we refer to the \textsc{Python} package \textsc{CUQIpy}, see \cite{riis2023}. 
For the star-shaped set parametrization, we choose the following prior and algorithm parameters in the order $(16, 8, 4, 2, 1)\cdot 10^{-2}$ of the relative noise levels: $b=(0.1, 0.045, 0.035, 0.025, 0.015)$, $q=10^{3/2}\cdot(7/20, 6/20, 5/20, 4/20, 3/20)$, $\delta = 2.5$, $\bar{\theta}=-2$, $\tau=4$ and $\theta^{(0)} = (1,1)$ corresponding to inclusions of constant radius. In the same order, we choose for the level set parametrization the following prior and sampling parameters: $b = (0.05,0.01,0.006,0.003,0.002)$, $q=5 \cdot(5/2, 2, 3/2, 1, 3/4)$, $\delta = 1.2$, $\tau=10$ and $\theta^{(0)} = 2\phi_{(0,-1)} \propto \sin(2\pi/2.2 y)$.\\

 For the star-shaped set parametrization, we obtain $K = 10^6$ samples after a burn-in of $5\cdot 10^5$, whereas for the level set parametrization, we take $K=10^6$ after $1.2\cdot 10^6$ samples as burn-in. We find this choice suitable, since the truncation in Section \ref{sec:star-shape-prior} leaves us with a higher dimensional sampling problem in the level set case. We base our posterior mean approximations on Monte Carlo estimates using $10^2$ equally spaced samples of the chain.\\


In Figure \ref{fig:recon1}, we see the posterior mean of arising from the star-shaped set parametrization and observations with different noise levels. The posterior mean approximates the ground truth well for all noise levels. Note that 
the posterior mean varies only slightly for each noise level and is approximately piecewise constant. This indicates little posterior variance. 
This is due to a small noise level and the fast contraction rate that this inverse problem provides by virtue of \eref{eq:inverse-stability-estimate-qpat}. 
The estimates are not exact, but note that the exact data is not available due to projection and discretization. Taking $N_d$ large improves the data but also causes the likelihood function to attain larger values. This, in return, requires a smaller step size $b$. This means there is a computational trade-off between $N_d$ and $b$. Even for 16\% relative noise, the reconstruction is fairly good, and the variance of the posterior samples is visibly larger. It is a strength of this method that it is robust for large noise levels. The mixing of the sample chains in the trace plots in Figure \ref{fig:sample-chains} indicates that the sampling algorithm is performing well. The convergence of the posterior mean is also evident in $L^2$-distance as computed numerically, see Figure \ref{fig:reg3}. This rate does not match the theoretical; but this is too much to expect for the observation \eref{eq:observation-numerics}, as this does not match the continuous observation \eref{eq:obs} for which the rate is proved. Note we do not numerically scale the priors as the theoretical results require.\\ 
\begin{figure}[tbp!]
    \centering
    \includegraphics[width=0.7\textwidth]{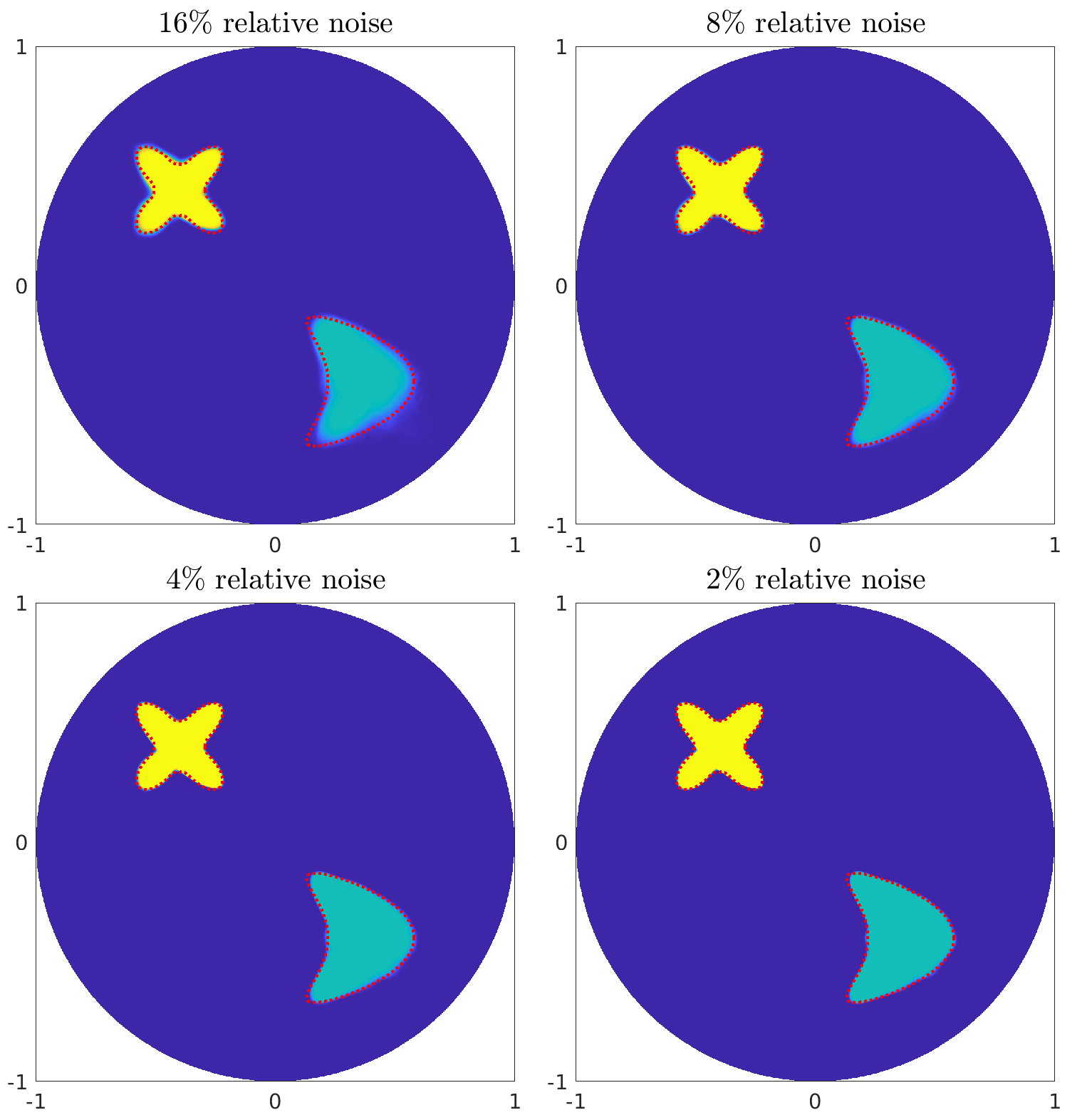}
    \caption{Posterior mean estimates of the absorption parameter using the star-shaped set parametrization in different noise regimes. The dotted red line indicates the location of $\gamma_0$.}
    \label{fig:recon1}
\end{figure}

Figure \ref{fig:recon2} suggests that the posterior mean converges as the noise level goes to zero, as is also evident from its $L^2$-loss in Figure \ref{fig:reg3}. Note that the reconstructions are continuous, not only because we take an average, but also because we use a continuous level set parametrization. Here, the sampling is initialized at $\theta^{(0)} = 2\phi_{(0,-1)}$, since this guess captures some of the low frequency information of possible $\theta_0$ that can give rise to $\gamma_0$. We report that chains with small step-size and the natural starting guess $\theta^{(0)}=0$ often get stuck in local minima due to the number of levels in \eref{eq:push-forward-level} and due to the fact that the pCN method does not require the gradient of either the parametrization or the forward map. \\


The sample diagnostics of Figure \ref{fig:sample-chains} indicate that sampling is  harder for the level set parametrization compared to the star-shaped set parameterization. This is hard for at least two likely reasons: the first is due to the large number of coefficients $\theta_\ell$, $\max(|\ell_1|,|\ell_2|)\leq 4$. This was also noted in \cite{dunlop2016}. The second likely reason is that $\theta \mapsto \Phi_\epsilon(\theta)$ is not injective for any $\epsilon\geq 0$. Therefore, the prior could be multi-modal, and this can lead to correlated samples in the Markov chain. Other work suggests that the pCN method shows an underwhelming performance when applied to a correlated and multi-modal posterior, see \cite{cui2016} which also provides a gradient-based remedy. The level set method has found success in optimization-based approaches, in for example \cite{santosa1995}, where a descent step is taken in each iteration of an iterative algorithm. A Bayesian \emph{maximum a posteriori} approach \cite{reese2021} has also been shown to find success for a smoothened level set. We expect that using gradient information in gradient-based MCMC methods would improve the performance significantly. A benefit of the level set parametrization is that we do not need to know \textit{a priori} the number of inclusions as in the case of the star-shaped set parametrization. One could also combine the two methods as in \cite{afkham2023}. Note in Figure \ref{fig:reg3} that, for both parametrizations, the posterior mean is stable to different noise realizations. This mirrors the convergence in probability we expect from Theorem \ref{thm:star} and \ref{thm:level-set-const}.
\begin{figure}[tbp!]
    \centering
    \includegraphics[width=0.7\textwidth]{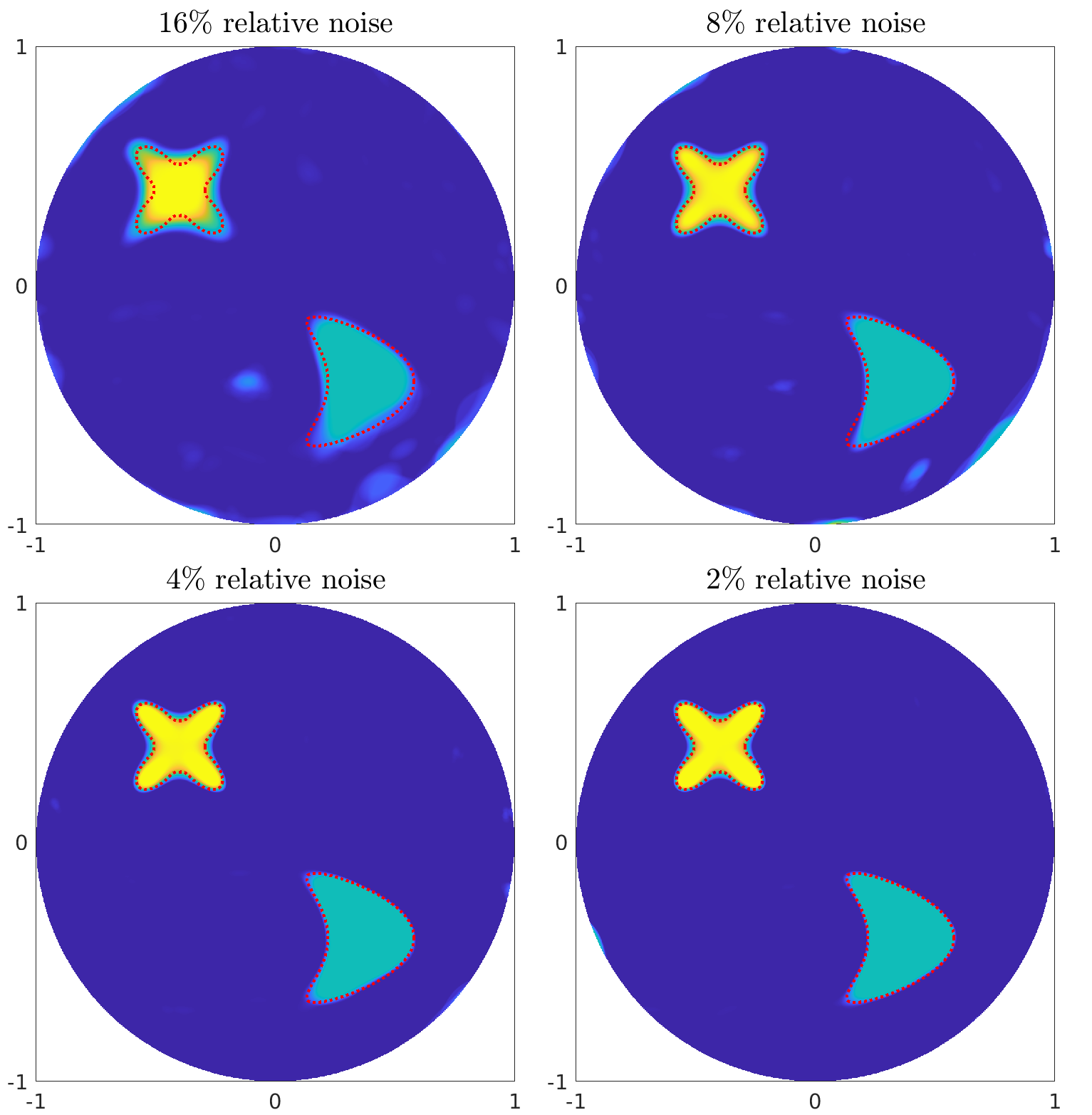}
    \caption{Posterior mean estimates of the absorption parameter using the level set parametrization in different noise regimes. The dotted red line indicates the location of $\gamma_0$.}
    \label{fig:recon2}
\end{figure}

\begin{figure}[tbp!]
\centering
\makebox[\textwidth][c]{
  \subcaptionbox{}{\includegraphics[width=2.1in]{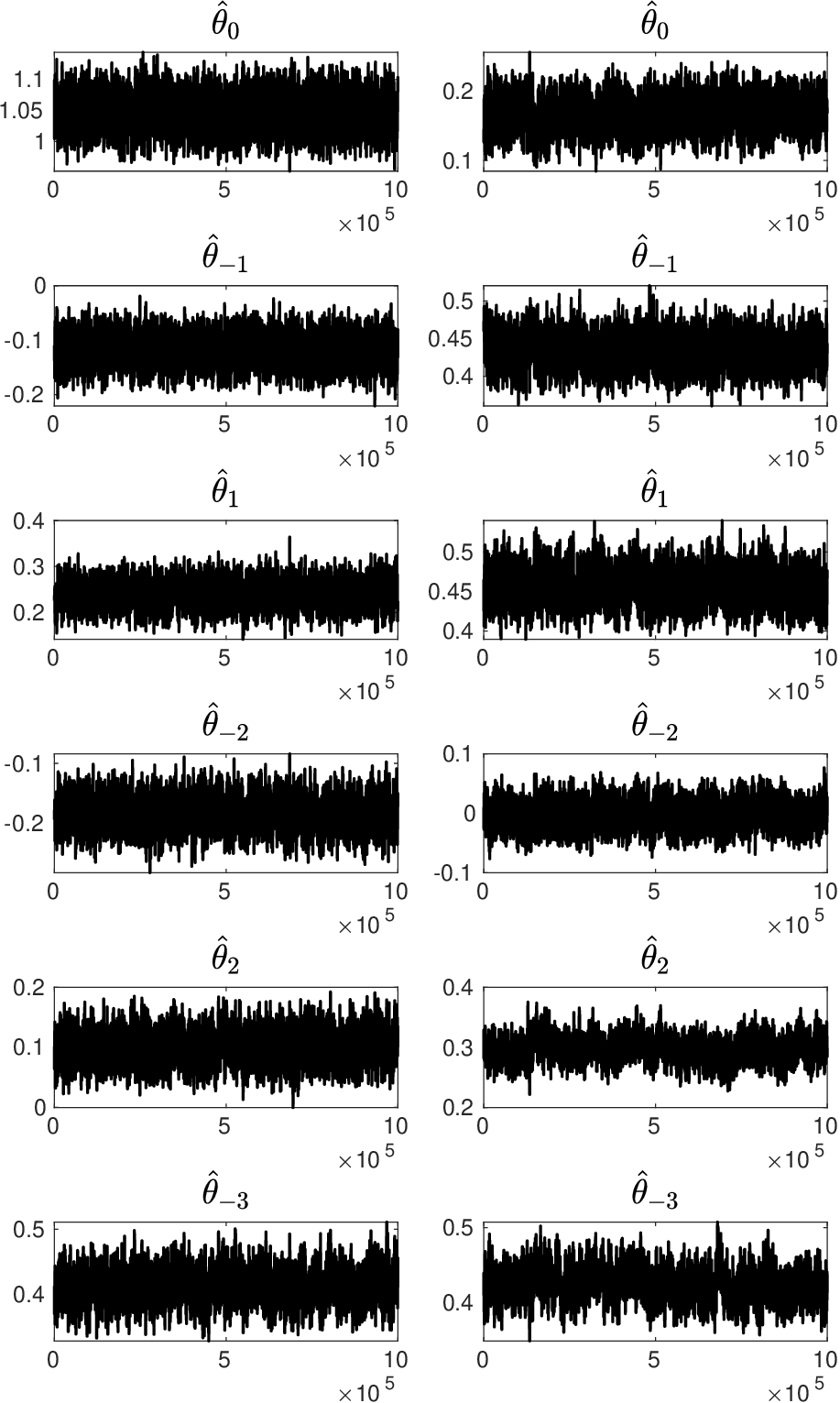}}\hspace{2em} \label{fig:partA}
  \subcaptionbox{}{\includegraphics[width=2.1in]{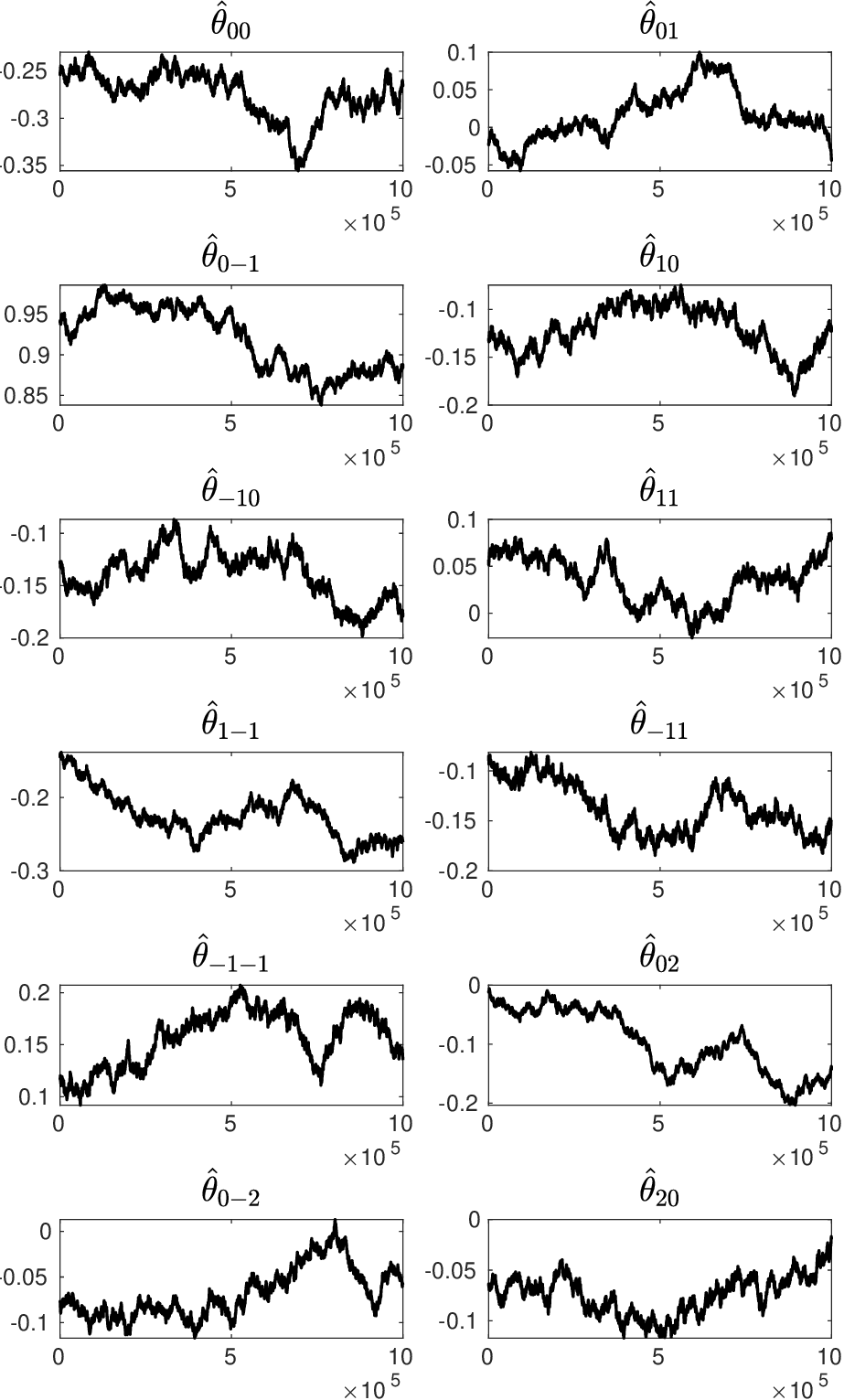}}\label{fig:partB}
  }
  \caption{Plot (\textsc{a}) shows trace plots of the first 6 Fourier coefficients of samples $\theta_1$ (left) and $\theta_2$ (right) from the posterior for the star-shaped set parametrization with observations subject to 4$\%$ relative noise. Plot (\textsc{b}) shows trace plots of the first 12 Fourier coefficients of samples $\theta$ from the posterior for the level set parametrization with observations subject to 4$\%$ relative noise.}
  \label{fig:sample-chains}
\end{figure}

\begin{figure}[tbp!]
        \centering
	\includegraphics[width = 0.58\textwidth]{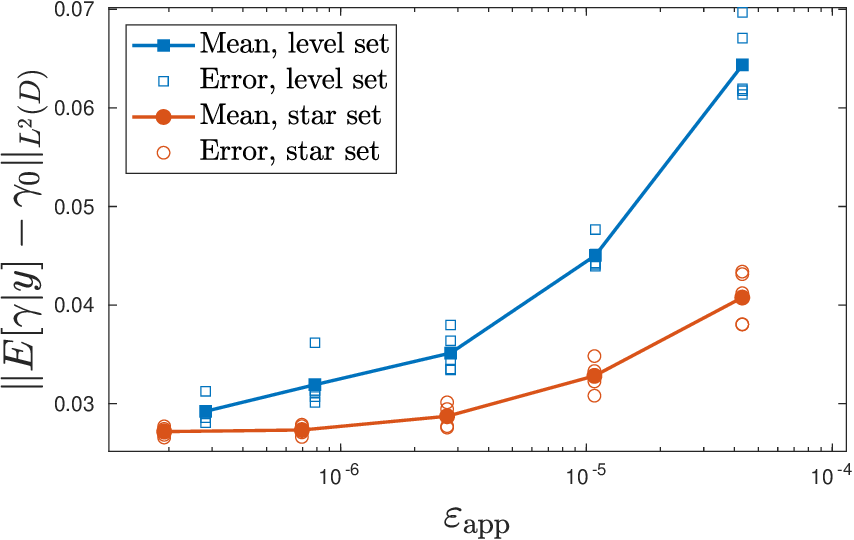}
	\caption{$L^2$-error of 5 realized posterior means for each noise level $\varepsilon_{\mathrm{app}}$ and both parametrizations. The solid markers represent the mean of the 5 error estimates.}
	\label{fig:reg3}
\end{figure}

\section{Conclusions}\label{sec:conclusions}
In this paper, we provide and investigate a Bayesian approach to consistent inclusion detection in nonlinear inverse problems. The posterior consistency analysis is performed under general conditions of Hölder continuity of the parametrization and conditional well-posedness of the inverse problem. Furthermore, it gives an explicit rate. We showcase the convergence of the posterior mean in a small noise limit for a photoacoustic problem, where we note that the star-shaped set parametrization outperforms the level set parametrization. We highlight that Theorem \ref{thm:star1} and \ref{thm:level-set-const} hold for any forward map satisfying Condition \ref{cond:forward-map} and can be applied to other parametrizations. A different parametrization could for example arise in the related problem of crack detection.
Interesting future work includes applying the inclusion detection method to other inverse problems. Similar stability estimates to that of Lemma \ref{lemma:stability-estimate-qpat} exist for the mathematically closely related problems of determining the absorption coefficient in acousto-optic imaging and the permittivity in microwave imaging, see \cite{choulli2021}. 
This is also the case for conductivity imaging in quantitative thermoacoustic tomography, where \cite{kocyigit2017} employed complex geometrical optics solutions. For the Calderón problem in two dimensions, \cite{clop2010} provides a stability estimate that is permitted for the star-shaped set parametrization, see also the comments after Theorem \ref{thm:star1} on the regularity of $\gamma$. There is a natural Hilbert space observation setting for the Calderón problem, see \cite{abraham2019}. Also in three dimensions and higher, conditional stability for inclusion detection in the context of the Calderón problem has been considered and shown to be logarithmic at best \cite{alessandrini2005}. The generalization to three dimensions and more complex phantoms is left for future work. An important direction in the numerical optimization of this approach is to consider gradient-based sampling methods. 


\section*{Acknowledgments}
The authors would like to thank Prof. Richard Nickl for many helpful discussions. The authors would also like to thank the Isaac Newton Institute for Mathematical Sciences, Cambridge,
for support and hospitality during the program Rich and Nonlinear Tomography where work on this paper was undertaken, supported by EPSRC grant no EP/R014604/1. BMA, KK and AKR were supported by The Villum Foundation (grant no. 25893). TT was supported by the European Research Council (ERC) under the European Union’s Horizon 2020 research and innovation programme (grant agreement no. 101001417 - QUANTOM) and the Academy of Finland (Centre of Excellence in Inverse Modelling and Imaging project 353086 and the Flagship Program Photonics Research and Innovation grant 320166).


\begin{appendices}
   \section{Covering numbers}\label{app:cov-numbers}
Consider a compact subset $A$ of a space $X$ endowed with a semimetric $d$. The covering number $N(A,d,\rho)$ denotes the minimum number of closed $d$-balls $\{x\in X: d(x_0,x)\leq \rho\}$ with center $x_0\in A$ and radius $\rho>0$ needed to cover $A$, see for example \cite[Appendix C]{ghosal2017} or \cite[Section 4.3.7]{gine2016}. Then the metric entropy is $\log N(A,d,\rho)$. When $d$ is replaced by a norm, we mean the metric induced by the norm.
\begin{lemma}\label{lemma:covering-numbers}
Let $(X,d_X)$ and $(Y,d_Y)$ be two linear spaces endowed with semimetric $d_X$ and $d_Y$.
    \begin{enumerate}[leftmargin = *, label=(\roman*)]
      \item If $f:X\rightarrow Y$ satisfies
      \begin{eqnarray}\nonumber d_Y(f(x),f(x')) \leq Cd_X(x,x')^\eta, \quad \forall x, x'\in A\end{eqnarray}
      for some $A\subset\subset X$ and some $\eta>0$, then for any $\rho > 0$ we have
        \begin{equation}
            N(f(A),d_Y,C\rho^\eta)\leq N(A,d_X,\rho).
        \end{equation}
      \item For $A\subset\subset X$ and $B\subset\subset Y$,
      \begin{eqnarray}\nonumber N(A\times B, d_\infty, \rho)\leq N(A, d_X, \rho)N(B,d_Y,\rho),\end{eqnarray}
    \end{enumerate}
    where $d_\infty((x,y),(x',y'))=\max(d_X(x,x'),d_Y(y,y'))$ is the product metric. 
\end{lemma}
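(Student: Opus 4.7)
The plan for both parts is a direct ``push-and-recover'' construction: take a minimal cover in the source, map its centers into the target (or pair them up in the product), and verify that the new centers lie in the set being covered so that the resulting cover is admissible under the paper's convention.

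For (i), I would set $N := N(A, d_X, \rho)$ and pick centers $x_1, \ldots, x_N \in A$ with $A \subset \bigcup_{i=1}^N \{x \in X : d_X(x, x_i) \leq \rho\}$. The claim is that $\{f(x_i)\}_{i=1}^N \subset f(A)$ serves as an admissible $C\rho^\eta$-cover of $f(A)$. Indeed, for any $y = f(x) \in f(A)$, pick $i$ with $d_X(x, x_i) \leq \rho$ and apply the Hölder bound to conclude
\begin{eqnarray*}
d_Y(f(x), f(x_i)) \leq C\, d_X(x, x_i)^\eta \leq C\rho^\eta.
\end{eqnarray*}
Since $f(x_i) \in f(A)$, this yields $N(f(A), d_Y, C\rho^\eta) \leq N$.

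For (ii), I would pick minimal covers $\{x_i\}_{i=1}^{N_A} \subset A$ and $\{y_j\}_{j=1}^{N_B} \subset B$, where $N_A = N(A, d_X, \rho)$ and $N_B = N(B, d_Y, \rho)$. Given $(x,y) \in A \times B$, choose $(i,j)$ so that $d_X(x, x_i) \leq \rho$ and $d_Y(y, y_j) \leq \rho$; then
\begin{eqnarray*}
d_\infty((x,y), (x_i, y_j)) = \max(d_X(x, x_i),\, d_Y(y, y_j)) \leq \rho.
\end{eqnarray*}
The $N_A N_B$ pairs $(x_i, y_j)$ all lie in $A \times B$, producing the required admissible $\rho$-cover and hence the stated bound.

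The proof is essentially bookkeeping, and I expect no real obstacle; the one point that deserves explicit attention is the admissibility requirement that the centers lie in the set being covered, but the construction above arranges this automatically by transporting centers that already belonged to $A$ (respectively $A$ and $B$).
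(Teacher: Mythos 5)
Your proof is correct and follows essentially the same route as the paper's: push the centers of a minimal $\rho$-cover of $A$ through $f$ (respectively, pair up the centers of covers of $A$ and $B$) and verify the image/product balls cover the target set. Your explicit check that the transported centers remain in $f(A)$ (resp.\ $A\times B$), as required by the paper's convention that centers lie in the set being covered, is a point the paper's own proof of part (i) glosses over, but the argument is the same.
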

\begin{proof}
    $(i)$ We denote by $B_X(x',\rho)$ and $B_Y(y',\rho)$ the ball in $X$ with center $x'\in X$ and radius $\rho>0$ and the ball in $Y$ with center $y'\in Y$ and radius $\rho>0$, respectively. For any $\rho>0$,
\begin{eqnarray}\nonumber f(B_X(x',\rho))\subset B_Y(f(x'),C\rho^\eta).\end{eqnarray}
Then it follows that
\begin{equation}
    N(f(A),d_Y,C\rho^\eta)\leq N(A,d_X,\rho).
\end{equation}
$(ii)$ Let $C_A$ be a finite set in $A$ and $C_B$ be a finite set in $B$ such that
\begin{eqnarray}\nonumber A\subset \bigcup_{x\in C_A} B_X(x,\rho) \quad \textnormal{ and } \quad B\subset \bigcup_{y\in C_B} B_Y(x,\rho).\end{eqnarray}
Take $z=(x,y)\in A\times B$, then there exists $x_0\in C_A$ such that $x\in B_X(x_0,\rho)$ and $y_0\in C_B$ such that $y\in B_Y(y_0,\rho)$. Hence $z\in B_{X\times Y}((x_0,y_0),\rho):=\{z\in X\times Y: d_\infty(z,(x_0,y_0))\leq \rho\}$. It follows that
\begin{eqnarray}\nonumber A\times B \subset \bigcup_{z\in C_A\times C_B} B_{X\times Y}(z,\rho),\end{eqnarray}
and hence the wanted property follows.
\end{proof}
\begin{lemma}\label{lemma:covering-lipschitz}
    Let $\mathcal{X}$ be a bounded Lipschitz domain in $\mathbb{R}^{d'}$ or the $d'$-dimensional torus and $\beta>d'/2$, then
    \begin{eqnarray}\nonumber \log N(\mathcal{S}_\beta(M),\|\cdot\|_{\infty},\rho)\leq C\rho^{-d'/\beta},\end{eqnarray}
    where $C=C(\beta,M,d',\mathcal{X})$ and $\|f\|_\infty :=\sup_{x\in \overline{\mathcal{X}}} |f(x)|$.
\end{lemma}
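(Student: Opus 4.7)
The plan is to reduce to the case $\mathcal{X}=\mathbb{T}^{d'}$ and then invoke a classical entropy estimate of Birman--Solomyak type for the Sobolev embedding $H^\beta\hookrightarrow L^\infty$. The torus case is immediate. For a bounded Lipschitz domain $\mathcal{X}\subset\mathbb{R}^{d'}$, I would use Stein's extension theorem to construct a bounded linear operator $E:H^\beta(\mathcal{X})\to H^\beta(\mathbb{R}^{d'})$ with $(Ef)|_{\mathcal{X}}=f$ and $\|Ef\|_{H^\beta(\mathbb{R}^{d'})}\leq C\|f\|_{H^\beta(\mathcal{X})}$. Multiplying by a smooth cutoff supported in a ball containing $\mathcal{X}$ and identifying this ball with a subset of a torus $\mathbb{T}^{d'}_R$ for $R$ large enough produces a bounded linear injection $\tilde E:H^\beta(\mathcal{X})\hookrightarrow H^\beta(\mathbb{T}^{d'}_R)$ satisfying $\|\tilde E f\|_{H^\beta(\mathbb{T}^{d'}_R)}\leq C'M$ whenever $f\in \mathcal{S}_\beta(M)$. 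Since restriction from $\mathbb{T}^{d'}_R$ to $\mathcal{X}$ is nonexpansive in the sup-norm, any $\rho$-cover of the image $\tilde E(\mathcal{S}_\beta(M))$ in $L^\infty(\mathbb{T}^{d'}_R)$ yields a $\rho$-cover of $\mathcal{S}_\beta(M)$ in $L^\infty(\mathcal{X})$, so it suffices to treat the torus.

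On the torus, the main technical input is the sharp entropy estimate for the embedding $H^\beta(\mathbb{T}^{d'})\hookrightarrow L^\infty(\mathbb{T}^{d'})$, valid because $\beta>d'/2$ ensures continuity of this embedding. This result, originating with Birman and Solomyak and sharpened by Edmunds--Triebel, gives that the dyadic entropy numbers $e_n$ of this embedding satisfy $e_n\lesssim n^{-\beta/d'}$. Translating from entropy numbers to covering numbers in the usual way yields $\log N(\mathcal{S}_\beta(M),\|\cdot\|_\infty,\rho)\leq C\rho^{-d'/\beta}$, where the constant absorbs the dependence on $M$, $\beta$, $d'$ and $\mathcal{X}$.

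To make the argument reasonably self-contained one may reproduce the core of the Birman--Solomyak bound via a tensor wavelet basis $\{\psi_{j,k}\}$ of $L^2(\mathbb{T}^{d'})$ whose regularity exceeds $\beta$. Such bases give the norm equivalence $\|f\|_{H^\beta}^2\sim \sum_{j,k}2^{2j\beta}|\langle f,\psi_{j,k}\rangle|^2$, so that $\mathcal{S}_\beta(M)$ is identified with an ellipsoid in sequence space. One then splits at a dyadic scale $J$ chosen so that the tail $\sum_{j>J}D_j f$ is at most $\rho/2$ in $L^\infty$, and covers the finite-dimensional projection onto scales $j\leq J$ by a volume argument. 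The main obstacle is to obtain the sharp exponent $d'/\beta$: the naive route of composing the Sobolev--Morrey embedding $H^\beta\hookrightarrow C^{\beta-d'/2}$ with the elementary Hölder-ball entropy bound only yields the weaker exponent $d'/(\beta-d'/2)$, and the improvement to $d'/\beta$ genuinely uses the ellipsoidal geometry of the wavelet description (equivalently, that the wavelet coefficients at scale $j$ live in an $\ell^2$ ball of radius $2^{-j\beta}M$ rather than an $\ell^\infty$ ball), together with careful optimization over $J$.
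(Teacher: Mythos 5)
Your proposal is correct and follows essentially the same route as the paper's proof: reduce the Lipschitz-domain case to a reference domain via a bounded Sobolev extension operator, use that restriction is nonexpansive in the sup-norm so covers pull back, and then invoke the classical Birman--Solomyak-type entropy bound for the ball of $H^\beta$ in $L^\infty$ (the paper cites \cite[Corollary 4.3.38]{gine2016}, which is the same wavelet-based estimate you sketch). The only cosmetic differences are that the paper extends to the cube $[0,1]^{d'}$ rather than a large torus, and your added remark on why the naive H\"older-embedding route only gives the exponent $d'/(\beta-d'/2)$ is accurate but not needed.
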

\begin{proof}
    Corollary 4.3.38 and the remark hereafter in \cite{gine2016} states that the norm ball $B_\beta(M)$ of the Sobolev space $H^\beta([0,1]^{d'})$ of radius $M$ satisfies for $\beta>d'/2$,
    \begin{equation}\label{eq:gine-and-nickl-entropy-bound}
        \log N(B_\beta(M),\|\cdot\|_{L^\infty([0,1]^{d'})},\rho)\leq C(\beta,M,d')\rho^{-d'/\beta}.
    \end{equation}
    If $\mathcal{X}$ is the $d'$-dimensional torus, we identify $H^\beta(\mathcal{X})$ with the corresponding periodic Sobolev space, which is a subset of $H^\beta([0,1]^{d'})$, hence the wanted result follows. 
    Now, if $\mathcal{X}$ is a bounded Lipschitz domain in $\mathbb{R}^{d'}$, we assume without loss of generality that $\mathcal{X}\subset [0,1]^{d'}$. Indeed, if $\mathcal{X}$ is not a subset of $[0,1]^{d'}$, we identify $f\in H^\delta(\mathcal{X})$ with $\tilde{f}\in H^\delta(\tilde{\mathcal{X}})$ for some $\tilde{\mathcal{X}}\subset [0,1]^{d'}$ by a scaling and update $M$ accordingly. Since $\mathcal{X}$ is Lipschitz, we let $E:H^\beta(\mathcal{X})\rightarrow H^\beta([0,1]^{d'})$ be a continuous extension operator satisfying
    \begin{eqnarray}\nonumber \|Ef\|_{H^\beta([0,1]^{d'})}\leq C(d',\beta,\mathcal{X})\|f\|_{H^\beta(\mathcal{X})},\end{eqnarray}
    see for example \cite{devore1993}. We denote the restriction $R:H^\beta([0,1]^{d'}))\rightarrow H^\beta(\mathcal{X})$, which is a contraction in supremum norm and the left-inverse of $E$. Then $\mathcal{S}_\beta(M)=R(E(\mathcal{S}_\beta(M)))$ and $E(\mathcal{S}_\beta(M))\subset B_\beta(CM)$, and hence
   \begin{eqnarray}\nonumber 
N(\mathcal{S}_\beta(M),\|\cdot\|_{\infty},\rho) &=  N(R(E(\mathcal{S}_\beta(M))),\|\cdot\|_{\infty},\rho) \\\nonumber 
    &\leq N(E(\mathcal{S}_\beta(M)),\|\cdot\|_{L^\infty([0,1]^{d'})},\rho),\\\nonumber 
       &\leq N(B_\beta(CM),\|\cdot\|_{L^\infty([0,1]^{d'})},\rho),\\ 
       &\leq C(\beta,M,d',\mathcal{X}) \rho^{-d'/\beta},\label{eq:cov-number-ext}
   \end{eqnarray}
   using also Lemma \ref{lemma:covering-numbers} $(i)$ and \eref{eq:gine-and-nickl-entropy-bound}.
\end{proof}
\begin{lemma}\label{lemma:triangle-inequality-argument}
With the notation defined in Section \ref{section:3}, we have for all $\rho>0$
    \begin{eqnarray}\nonumber N(B_\infty(M\rho)+ \mathcal{S}_\delta(CM), \|\cdot\|_{\infty}, 2M\rho)\leq N(\mathcal{S}_\delta(CM), \|\cdot\|_{\infty}, M\rho).\end{eqnarray}
\end{lemma}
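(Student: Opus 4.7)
The plan is to exploit the Minkowski sum structure directly via the triangle inequality: a cover of the summand $\mathcal{S}_\delta(CM)$ at scale $M\rho$ yields, by the same centers, a cover of the full sum $B_\infty(M\rho) + \mathcal{S}_\delta(CM)$ at scale $2M\rho$, since the $B_\infty(M\rho)$-component contributes at most $M\rho$ to the distance. This is purely a containment argument; there are no estimates involving Sobolev embeddings or the constant $C$ from Condition \ref{cond:prior}, so those play no role here.

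Concretely, I would start by fixing $N := N(\mathcal{S}_\delta(CM), \|\cdot\|_\infty, M\rho)$ and choosing centers $g_1,\dots,g_N \in \mathcal{S}_\delta(CM)$ such that the closed $\|\cdot\|_\infty$-balls $B_\infty(g_j, M\rho)$ cover $\mathcal{S}_\delta(CM)$. (If $N=\infty$ there is nothing to prove.) Then, given any $f \in B_\infty(M\rho) + \mathcal{S}_\delta(CM)$, I would write $f = f_1 + f_2$ with $\|f_1\|_\infty \leq M\rho$ and $f_2 \in \mathcal{S}_\delta(CM)$, pick $j$ with $\|f_2 - g_j\|_\infty \leq M\rho$, and conclude by the triangle inequality that
\begin{equation*}
\|f - g_j\|_\infty \leq \|f_1\|_\infty + \|f_2 - g_j\|_\infty \leq 2M\rho,
\end{equation*}
so $f \in B_\infty(g_j, 2M\rho)$. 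Thus $\{B_\infty(g_j, 2M\rho)\}_{j=1}^N$ covers $B_\infty(M\rho) + \mathcal{S}_\delta(CM)$, giving the inequality.

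There is essentially no obstacle; the only mild subtlety is that the definition of covering number recalled in Appendix \ref{app:cov-numbers} requires the centers of the covering balls to lie in the set being covered. The centers $g_j \in \mathcal{S}_\delta(CM)$ are automatically in $B_\infty(M\rho) + \mathcal{S}_\delta(CM)$ (since $0 \in B_\infty(M\rho)$), so this convention is respected without modification. This completes the plan.
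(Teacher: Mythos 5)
Your proof is correct and is essentially identical to the paper's: both cover $\mathcal{S}_\delta(CM)$ by $N(\mathcal{S}_\delta(CM),\|\cdot\|_\infty,M\rho)$ balls of radius $M\rho$ and then use the decomposition $f=f_1+f_2$ with the triangle inequality to show the same centers cover the Minkowski sum at radius $2M\rho$. Your extra remark on the centers lying in the covered set (via $0\in B_\infty(M\rho)$) is a harmless refinement the paper omits.
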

\begin{proof}
By Lemma \ref{lemma:covering-lipschitz} there exists $N>0$ for which there is a sequence $\{\theta_i\}_{i=1}^N$ in $\mathcal{S}_\delta(CM)$ such that
\begin{eqnarray}\nonumber \mathcal{S}_\delta(CM) \subset \cup_{i=1}^N B_\infty(\theta_i,M\rho).\end{eqnarray} 
By the triangle inequality,
\begin{equation}\label{eq:triangle-sets}
    B_\infty(M\rho)+ \mathcal{S}_\delta(CM) \subset \cup_{i=1}^N B_\infty(\theta_i,2M\rho),
\end{equation}
since if $\theta = \theta^{(1)} + \theta^{(2)}$ for $\theta^{(1)}\in B_\infty(M\rho)$ and $\theta^{(2)}\in \mathcal{S}_\delta(CM)$, then there exists a $\theta_i$ such that $\|\theta^{(2)}-\theta_i\|_{\infty}\leq M\rho$, and hence
\begin{eqnarray}\nonumber \|\theta - \theta_i\|_{\infty} \leq \|\theta^{(1)}\|_{\infty} +  \|\theta^{(2)}-\theta_i\|_{\infty} \leq 2M\rho.\end{eqnarray}
Then the property follows from \eref{eq:triangle-sets}.
\end{proof}

    \section{On maximum likelihood composite testing}\label{app:testing}
     We denote by $E_n$ and $E_n^\gamma$ the expectation with respect to $P_n$ and $P_n^\gamma$ respectively. 
    \begin{lemma}\label{lemma:tests}
    Suppose for a non-increasing function $N(\rho)$, some $\rho_0>0$ and all $\rho > \rho_0$, we have
    \begin{eqnarray}\nonumber N(\{\gamma \in A_n: \rho < d_{\mathcal{G}}(\gamma,\gamma_0)< 2\rho\},d_{\mathcal{G}},\rho/4)\leq N(\rho).\end{eqnarray}
    Then for every $\rho > \rho_0$, there exist measurable functions $\Psi_n: \mathcal{Y}_{-}\rightarrow \{0,1\}$ such that
    \begin{eqnarray}\nonumber E_{n}^{\gamma_0}(\Psi_n)\leq N(\rho)\frac{e^{-\frac{1}{8\sigma^2}n\rho^2}}{1-e^{-\frac{1}{8\sigma^2}n\rho^2}},\end{eqnarray}
    \begin{eqnarray}\nonumber \sup_{\gamma \in A_n: d_{\mathcal{G}}(\gamma,\gamma_0)>\rho} E_{n}^{\gamma}(1-\Psi_n)\leq e^{-\frac{1}{32\sigma^2}n\rho^2}.\end{eqnarray}
\end{lemma}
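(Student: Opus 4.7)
The plan is to construct $\Psi_n$ as the maximum of a doubly-indexed family of Neyman--Pearson type tests against single alternatives $\gamma_i^{(j)}$, drawn from covers of dyadic annuli around $\gamma_0$ supplied by the hypothesis. The key technical input is that the Gaussian white noise structure of \eref{eq:obs} turns the log-likelihood ratio between $P_n^{\gamma_0}$ and $P_n^{\gamma_1}$ into a one-dimensional Gaussian, yielding an explicit Chernoff bound in terms of $d := d_\mathcal{G}(\gamma_1,\gamma_0)$.

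For the \emph{simple tests}, define
\[ \psi_{\gamma_1} := \mathds{1}\left\{\langle Y - \tfrac{1}{2}(\mathcal{G}(\gamma_0) + \mathcal{G}(\gamma_1)),\, \mathcal{G}(\gamma_1) - \mathcal{G}(\gamma_0)\rangle > 0\right\}. \]
Under $P_n^{\gamma_0}$ the statistic equals $-d^2/2 + \varepsilon_n d Z$ with $Z\sim N(0,1)$ and $\varepsilon_n = \sigma/\sqrt{n}$, so the standard Gaussian tail bound gives $E_n^{\gamma_0}(\psi_{\gamma_1}) \leq e^{-nd^2/(8\sigma^2)}$. For any $\gamma$ with $d_\mathcal{G}(\gamma,\gamma_1) \leq d/4$, Cauchy--Schwarz applied to the cross term $\langle \mathcal{G}(\gamma) - \mathcal{G}(\gamma_1), \mathcal{G}(\gamma_1) - \mathcal{G}(\gamma_0)\rangle$ shows that under $P_n^\gamma$ the statistic has mean at least $d^2/4$, hence $E_n^\gamma(1 - \psi_{\gamma_1}) \leq e^{-nd^2/(32\sigma^2)}$.

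For the \emph{dyadic union}, partition the alternative into shells $T_j := \{\gamma \in A_n: 2^j\rho < d_\mathcal{G}(\gamma,\gamma_0) < 2^{j+1}\rho\}$ for $j = 0,1,2,\ldots$. Applying the hypothesis at scale $2^j\rho > \rho_0$ and invoking monotonicity of $N$ furnishes a cover of $T_j$ by at most $N(\rho)$ closed $d_\mathcal{G}$-balls of radius $2^{j-2}\rho$ with centers $\gamma_i^{(j)} \in T_j$. Since $d_i^{(j)} := d_\mathcal{G}(\gamma_i^{(j)},\gamma_0) > 2^j\rho$, the cover radius $2^{j-2}\rho$ lies strictly below $d_i^{(j)}/4$, placing each shell in the regime of the simple tests. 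Set $\Psi_n := \max_{j\geq 0}\max_i \psi_{\gamma_i^{(j)}}$. A union bound over all centers yields
\[ E_n^{\gamma_0}(\Psi_n) \leq \sum_{j\geq 0} N(2^j\rho)\, e^{-n(2^j\rho)^2/(8\sigma^2)} \leq N(\rho)\sum_{j\geq 0} e^{-n 4^j\rho^2/(8\sigma^2)}, \]
and the elementary inequality $4^j \geq j+1$ collapses the geometric sum to the stated $N(\rho)e^{-n\rho^2/(8\sigma^2)}/(1-e^{-n\rho^2/(8\sigma^2)})$. For the Type II bound, every $\gamma \in A_n$ with $d_\mathcal{G}(\gamma,\gamma_0) > \rho$ lies in some $T_j$ and is covered by some $\gamma_i^{(j)}$, so $E_n^\gamma(1 - \Psi_n) \leq E_n^\gamma(1 - \psi_{\gamma_i^{(j)}}) \leq e^{-n(2^j\rho)^2/(32\sigma^2)} \leq e^{-n\rho^2/(32\sigma^2)}$.

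The main obstacle is the careful matching between the cover radius $\rho/4$ supplied by the hypothesis and the radius $d/4$ in which the simple tests remain powerful. The dyadic choice of shells reconciles these scales automatically, since for centers $\gamma_i \in T_j$ a $2^{j-2}\rho$-cover is already strictly finer than $d_i/4$. The factors $8$ and $32$ in the final exponents are traceable to the Gaussian tail exponents $d^2/(8\varepsilon_n^2)$ and $d^2/(32\varepsilon_n^2)$ from the Type I and Type II computations in the simple-test step, respectively.
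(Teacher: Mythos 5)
Your proof is correct and follows essentially the same route as the paper's: a maximum-likelihood (Neyman--Pearson) test against each center of a cover of concentric shells around $\gamma_0$, combined with a union bound and a geometric series; your test $\psi_{\gamma_1}$ is exactly the likelihood-ratio test $\mathds{1}\{p_n^{\gamma_1}/p_n^{\gamma_0}>1\}$ used there. The only differences are cosmetic --- you peel into dyadic shells $(2^j\rho,2^{j+1}\rho)$ where the paper uses arithmetic shells $(j\rho,(j+1)\rho]$, and you derive the simple-test error bounds from the explicit Gaussian log-likelihood ratio where the paper cites them --- and both versions share the same harmless open-versus-closed boundary bookkeeping in the shell decomposition.
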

\begin{proof}
    To construct the measurable functions $\Psi_n$ we use the maximum likelihood test, see \cite[Lemma D.16]{ghosal2017} and
    the covering argument of \cite[Theorem 7.1]{ghoshal2000}, see also \cite[Theorem 7.1.4]{gine2016}. Choose a finite set $S_j$ of points in each shell
    \begin{eqnarray}\nonumber S_j = \{\gamma \in A_n: \rho j < d_{\mathcal{G}}(\gamma,\gamma_0)\leq \rho(1+j)\}, \quad j\in \mathbb{N},\end{eqnarray}
    so that every $\gamma \in S_j$ is within distance $\frac{j\rho}{4}$ of a point in $S_j'$. For $\rho > \rho_0$, there are at most $N(j\rho)$ such points. For each $\gamma_{jl}\in S_j'$, define the measurable function,  $\Psi_{n,j,l}:\mathcal{Y}_{-}\rightarrow \{0,1\}$, known as the maximum likelihood test,
    \begin{eqnarray}\nonumber \Psi_{n,j,l}(y) := \mathds{1}_{A_{n,j,l}}(y),\end{eqnarray}
    where
    \begin{eqnarray}\nonumber A_{n,j,l} := \{y\in \mathcal{Y}_{-}: \frac{p_n^{\gamma_{jl}}}{p_n^{\gamma_{0}}}(y)>1 \}.\end{eqnarray}
    By \cite[Lemma D.16]{ghosal2017} we have
    \begin{eqnarray}\nonumber E_{n}^{\gamma_0}(\Psi_{n,j,l}) \leq e^{-\frac{1}{8\sigma^2}n(\rho j)^2}\end{eqnarray}
    and
    \begin{eqnarray}\nonumber \sup_{\{\gamma \in A_n:d_{\mathcal{G}}(\gamma,\gamma_{jl})\leq \frac{\rho j}{4}\}}E_{n}^{\gamma}(1-\Psi_{n,j,l}) \leq e^{-\frac{1}{32\sigma^2}n(\rho j)^2}.\end{eqnarray}
    Now, set $\Psi_n(y):=\mathds{1}_{\cup_{j,l}A_{n,j,l}}(y)$. This is also a measurable function, since a countable union of measurable sets is measurable. Then by the union bound
    \begin{eqnarray}\nonumber  \fl
        E_{n}^{\gamma_0}(\Psi_n) &\leq \sum_{j\in\mathbb{N}} \sum_{l=1}^{N(j\rho)} E_{n}^{\gamma_0}(\Psi_{n,j,l}) \leq \sum_{j\in\mathbb{N}} N(j\rho) e^{-\frac{1}{8\sigma^2}n(\rho j)^2}
        \leq N(\rho)\frac{e^{-\frac{1}{8\sigma^2}n\rho^2}}{1-e^{-\frac{1}{8\sigma^2}n\rho^2}}
    \end{eqnarray}
    On the other hand, for any $j\geq 1$,
    \begin{eqnarray}\nonumber 
        \sup_{\gamma\in \cup_{i\geq j}S_i} E_{n}^{\gamma_0}(1-\Psi_n) &= \sup_{i\geq j,l} \sup_{\gamma: d_{\mathcal{G}}(\gamma,\gamma_{i,l}) \leq \frac{\rho i}{4}}P_n^{\gamma_0}(\cap_{j',l'} A_{n,j',l'}^c),\\\nonumber 
        &\leq \sup_{i\geq j,l} \sup_{\gamma: d_{\mathcal{G}}(\gamma,\gamma_{i,l}) \leq \frac{\rho i}{4}}E_{n}^{\gamma_0}(1-\Psi_{n,i,l}),\\\nonumber 
        &\leq \sup_{i\geq j} e^{-\frac{1}{32\sigma^2}n(\rho i)^2}.
    \end{eqnarray}
    For $j=1$ we get the wanted result.
\end{proof}
There are other ways to prove the existence of suitable measurable functions $\Psi_n$ used in the proof of Theorem \ref{thm:contr}. We mention here the approximation argument of  \cite[Lemma 8]{abraham2019} that requires smoothness properties of $\mathcal{G}$. 
\begin{proof}[Proof of \ref{thm:contr}]
    For the convenience of the reader, we provide what is a standard testing argument for our setting in Lemma \ref{lemma:tests}, see also \cite[Theorem 7.1]{ghoshal2000}, implied by Condition \ref{cond:1.3}. Indeed, since the covering number decreases, when increasing the `radius', we have for all $\rho>\rho_0:= 4m_0r_n$,
\begin{eqnarray}\nonumber N(A_n,d_{\mathcal{G}},\frac{\rho}{4})\leq N(\rho):=e^{C_3nr_n^2}.\end{eqnarray} 
Given any $C_2 > C_1 + 4$, we set $\rho := 4mr_n$ for $m>m_0$ large enough (depending also on $C_2$, $C_3$ and $\sigma^2$ by the following) and apply Lemma \ref{lemma:tests}: there exists measurable functions $\Psi_n: \mathcal{Y}_{-}\rightarrow \{0,1\}$ such that 
\begin{eqnarray}\nonumber 
    E_n^{\gamma_0}(\Psi_n)
    &\leq e^{C_3nr_n^2}\frac{e^{-2\sigma^{-2}mn r_n^2}}{1-e^{-2\sigma^{-2}m n r_n^2}} \leq e^{-C_2nr_n^2}.
\end{eqnarray}
In addition, choosing $m$ such that $(4m)^2/(32\sigma^2)\geq C_2$ we note
\begin{eqnarray}\nonumber 
    \sup_{\gamma \in A_n: d_{\mathcal{G}}(\gamma,\gamma_0)>4mr_n^2} E_{n}^{\gamma}(1-\Psi_n)&\leq e^{-C_2nr_n^2}.
\end{eqnarray}
Then Theorem 28 in \cite{nickl2020} and modifications as in the proof of Theorem 1.3.2 in \cite{nickl2022} give the claim. 
\end{proof}

\section{Proofs of section \ref{section:4}}\label{sec:extra-proofs}
\begin{proof}[Proof of Theorem \ref{thm:star}]
    The proof relies on satisfying Condition \ref{cond:1.1}, \ref{cond:1.2} and \ref{cond:1.3} for the choice $A_n= \Phi(\Theta_n)$ for 
\begin{equation}\label{eq:reg-sets-product}
    \Theta_n := \{\theta = \phi_1+\phi_2: \|\phi_1\|_{\infty}\leq M\bar{r}_n, \|\phi_2\|_{\mathcal{H}}\leq M\}^{\mathcal{N}} \cap \mathcal{S}_\beta(M),
\end{equation}
where $\mathcal{S}_\beta(M)$ is defined in \eref{eq:Theta-S-def}. To satisfy Condition \ref{cond:1.1} we follow Lemma \ref{lemma:small-ball} and note for $\theta=(\theta_1,\ldots,\theta_{\mathcal{N}})$ and $\theta_0=(\theta_{0,1},\ldots,\theta_{0,\mathcal{N}})$ that 
\begin{eqnarray}\nonumber \fl
    \{\theta\in \Theta: d_{\mathcal{G}}(\Phi(\theta),\Phi(\theta_0))\leq r_n\}\\\nonumber 
    \supset \{\theta\in \Theta:\|\theta-\theta_0\|_\mathcal{N} \leq C\bar{r}_n, \|\theta_i-\theta_{0,i}\|_{H^\beta(\mathbb{T})}\leq \tilde{R}, i=1,\ldots,\mathcal{N}\},\\\nonumber 
    \supset \{\theta\in \Theta:\|\theta_i-\theta_{0,i}\|_\mathcal{N} \leq C\bar{r}_n, \|\theta_i-\theta_{0,i}\|_{H^\beta(\mathbb{T})}\leq \tilde{R}, i=1,\ldots,\mathcal{N}\},\\\nonumber 
    \supset \otimes_{i=1}^\mathcal{N}\left(\{\theta_i:\|\theta_i-\theta_{0,i}\|_{L^\infty(\mathbb{T})}\} \cap \{\theta_i:\|\theta_i-\theta_{0,i}\|_{H^\beta(\mathbb{T})}\leq \tilde{R}\} \right), 
\end{eqnarray}
for some $\tilde{R}>0$ chosen sufficiently large. Then \eref{eq:product-prior} together with the argument in Lemma \ref{lemma:small-ball} implies that Condition \ref{cond:1.1} is satisfied. Note $\Theta_n$ in \eref{eq:reg-sets-product} is the $\mathcal{N}$-product set of \eref{eq:reg-sets}. Then repeated use of the standard set relation $A^2\setminus B^2=[A \times (A\setminus B)]\cup[(A\setminus B)\times A]$ and the argument of Lemma \ref{lemma:excess-mass-2} implies there exists $M>C(C_2,\Pi'_\theta,\delta,\mathcal{N})$ such that 
    \begin{eqnarray}\nonumber \Pi_\theta(\Theta\setminus\Theta_n)\leq e^{-C_2 nr_n^2},\end{eqnarray}
    for any given $C_2>0$, hence Condition \ref{cond:1.2} is satisfied. Condition \ref{cond:1.3} is satisfied as in Lemma \ref{lemma:metric-entropy-reg-sets} using Lemma \ref{lemma:covering-numbers} $(ii)$. Then the result follows as in the proof of Theorem \ref{thm:consist-in-Hbeta} and Corollary \ref{corollary:post-mean}.
\end{proof}
\begin{lemma}\label{lemma:level-set-app}
    Let $V_\epsilon$ be defined as in \eref{eq:Veps} for $\theta_0\in H^\beta_\diamond(\mathcal{X})$, $\beta>1+d'/2$ and some $c=c_{i-1}\in \overline{\mathbb{R}}$. Then for $\epsilon>0$ sufficiently small
    \begin{eqnarray}\nonumber |V_\epsilon|\leq C(\theta_0,c,\mathcal{X})\epsilon.\end{eqnarray}
\end{lemma}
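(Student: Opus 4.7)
The plan is to argue via a local straightening of the level set $L_c := \{x \in \overline{\mathcal{X}} : \theta_0(x) = c\}$ using the implicit function theorem, and then conclude by a compactness argument that $V_\epsilon$ shrinks onto $L_c$ at a linear rate in $\epsilon$. First, if $c \in \{c_0, c_{\mathcal{N}}\} = \{-\infty, +\infty\}$, the corresponding terms in the application of this lemma (see the proof of Lemma \ref{lemma:4.5}$(i)$) vanish by convention, so it suffices to treat $c \in \mathbb{R}$. Since $\beta > 1 + d'/2$, the Sobolev embedding $H^\beta(\mathcal{X}) \hookrightarrow C^1(\overline{\mathcal{X}})$ gives $\theta_0 \in C^1(\overline{\mathcal{X}})$, and the assumption $\theta_0 \in H^\beta_\diamond(\mathcal{X}) \subset T_c$ gives $|\nabla \theta_0(x)| > 0$ for all $x \in \mathcal{X}$ with $\theta_0(x) = c$.

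Second, I would establish a uniform lower bound on $|\nabla \theta_0|$ in a neighborhood of $L_c$: by continuity, $L_c$ is closed in the compact set $\overline{\mathcal{X}}$, hence compact, and on it the continuous function $|\nabla \theta_0|$ attains a positive minimum $\delta > 0$. By continuity again, there is an open neighborhood $U$ of $L_c$ in $\overline{\mathcal{X}}$ on which $|\nabla \theta_0| \geq \delta/2$. Moreover, since $L_c \cap (\overline{\mathcal{X}} \setminus U) = \emptyset$ and $\overline{\mathcal{X}} \setminus U$ is compact, the continuous function $|\theta_0 - c|$ attains a positive minimum $\eta > 0$ on it; hence for all $\epsilon < \eta$, $V_\epsilon \subset U$.

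Third, I would cover $L_c$ by finitely many open patches on which $\theta_0 - c$ can be straightened by the implicit function theorem: at each $x_0 \in L_c$, some partial derivative $\partial_{x_j} \theta_0(x_0)$ has magnitude at least $\delta/(2\sqrt{d'})$, and the implicit function theorem produces a $C^1$ local coordinate chart in which $L_c$ becomes a piece of a coordinate hyperplane and $|\theta_0 - c|$ is comparable to the transverse coordinate. Compactness of $L_c$ yields a finite subcover; on each patch, $V_\epsilon$ is contained in a slab of transverse width $O(\epsilon)$ and hence of Lebesgue measure at most $C \epsilon$. Summing over the finite cover yields $|V_\epsilon| \leq C(\theta_0, c, \mathcal{X}) \epsilon$ for all sufficiently small $\epsilon$.

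The main technical point is securing the uniform lower bound on $|\nabla \theta_0|$ in a neighborhood of $L_c$ when $L_c$ is allowed to meet $\partial \mathcal{X}$; this is a non-issue when $\mathcal{X}$ is a torus as in the application of Section \ref{sec:level-set}, and for the general bounded Lipschitz case one either uses that the almost-sure event $\theta_0 \in H^\beta_\diamond(\mathcal{X})$ already controls interior critical points, or extends $\theta_0$ slightly past $\partial \mathcal{X}$ so the compactness argument applies verbatim. Alternatively, the same linear bound can be derived via the coarea formula applied to $\theta_0$ on $U$, namely $\delta/2 \cdot |V_\epsilon| \leq \int_{V_\epsilon} |\nabla \theta_0|\, dx = \int_{c-\epsilon}^{c+\epsilon} \mathcal{H}^{d'-1}(\{\theta_0 = s\} \cap U)\, ds$, which bypasses the explicit local straightening at the cost of requiring a uniform upper bound on the $(d'-1)$-dimensional Hausdorff measure of the nearby level sets for $|s - c|$ small.
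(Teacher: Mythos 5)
Your proposal is correct and follows essentially the same route as the paper's proof: straighten the level set locally via the implicit/inverse function theorem, use compactness of $\theta_0^{-1}(c)$ to extract a finite cover of patches on which $V_\epsilon$ sits in a slab of transverse width $O(\epsilon)$, verify that $V_\epsilon$ is eventually contained in that finite union, and sum. The only cosmetic differences are that you argue the containment $V_\epsilon\subset\bigcup N_{x_0}$ via the positive minimum of $|\theta_0-c|$ on the complement rather than the paper's closed-map argument, and you flag the boundary issue (which the paper also leaves implicit).
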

\begin{proof}
    Note for $c=c_0=-\infty$ and $c=c_\mathcal{N}=\infty$ this is trivially satisfied. 
    Next, the inverse function theorem implies that any point $x_0\in \theta^{-1}(c)$ has a neighborhood $N_{x_0}$ that is a diffeomorphic image $\varphi_{x_0}(Q_{\epsilon_{x_0}})$ of a box
   \begin{eqnarray}\nonumber Q_{\epsilon_{x_0}} := \{(s,t):|s|\leq \epsilon_{x_0}, |t|\leq \epsilon_{x_0}\}, \quad 0<\epsilon_{x_0}<1,\end{eqnarray}
    such that
    \begin{eqnarray}\nonumber \theta_0(\varphi_{x_0}(s,t)) = s|(\nabla\theta_0)(x_0)|+c,\end{eqnarray}
    (one should find the inverse of $g(x_1,x_2)=(\frac{\theta_0(x_1,x_2)-c}{|\nabla\theta_0(x_0)|},x_2)$ in a neighborhood of $x_0$). Note we have a $C^1$ parametrization of an intersection of $V_\epsilon$ with a small neighborhood of $x_0$,
    \begin{eqnarray}\nonumber V_{\epsilon}\cap N_{x_0}=\{\varphi_{x_0}(s,t): |s|\leq \frac{\epsilon}{|\nabla\theta_0(x_0)|},|t|\leq \epsilon_{x_0}\},\end{eqnarray}
    for all $\epsilon \leq |\nabla \theta_0(x_0)|\epsilon_{x_0}$. By the classical area formula, we have
    \begin{eqnarray}\nonumber |V_{\epsilon}\cap N_{x_0}| = \int_{|s|\leq C(x_0,\theta_0) \epsilon} \int_{|t|\leq \epsilon_{x_0}} |J\varphi_{x_0}(s,t)| \, ds \, dt \leq C(x_0,\theta_0) \epsilon,\end{eqnarray}
    since the continuous function $J\varphi_{x_0}$ (it is a polynomial of zero'th and first order derivatives of $\varphi_{x_0}$), is integrated on a compact domain. Note $\cup_{x_0\in \mathcal{I}}N_{x_0}$ is an open cover of $\theta_0^{-1}(c)$ for some finite set $\mathcal{I}\subset \theta_0^{-1}(c)$ depending on $\mathcal{X}$ and $\theta_0$. Take $\epsilon$ such that $V_\epsilon \subset \cup_{x_0\in\mathcal{I}}N_{x_0}$. This $\epsilon$ exists since $\theta_0$ as defined on $\overline{\mathcal{X}}$ is a closed function, and hence there exists in $\mathbb{R}$ a neighborhood $U$ of $c$ such that $\theta_0^{-1}(U)\subset \cup_{x_0\in \mathcal{I}} N_{x_0}$, see \cite[Theorem 1.4.13]{engelking1989}. Then,
    \begin{eqnarray}\nonumber 
        |V_\epsilon| &\leq \sum_{x_0\in \mathcal{I}} |V_\epsilon \cap N_{x_0}|\leq C(\theta_0,c,\mathcal{X})\epsilon.
    \end{eqnarray}
    This is true for any $i=1,\ldots,\mathcal{N}-1$ for which the estimate is only updated by a new constant.
\end{proof}

\begin{proof}[Proof of Theorem \ref{thm:level-set-const}]
    Let $\gamma_0^n = \Phi_{n^{-k}}(\theta_0)$ and $\gamma^n = \Phi_{n^{-k}}(\theta)$ for some $0<k<1$, which we will choose later. For any $\hat{r}_n>0$, the triangle inequality gives
    \begin{eqnarray}\nonumber \fl \left\{\gamma: \|\gamma-\gamma_0\|_{L^2(D)}\leq C_0\hat{r}_n\right\}\\ \nonumber
    \supset \left\{\gamma: \|\gamma-\gamma_0^n\|_{L^2(D)}\leq \frac{1}{2}C_0\hat{r}_n, \|\gamma_0^n-\gamma_0\|_{L^2(D)}\leq \frac{1}{2}C_0\hat{r}_n\right\},\end{eqnarray}
    hence
    \begin{eqnarray} \fl \nonumber 
        \Pi(\gamma:\|\gamma-\gamma_0\|_{L^2(D)}\leq C_0\hat{r}_n |Y) &\geq \Pi(\gamma:\|\gamma-\gamma_0^n\|_{L^2(D)}\leq \frac{1}{2}C_0\hat{r}_n|Y)\\
        &\,\,\,\,\,\times \mathds{1}_{\|\gamma_0^n-\gamma_0\|_{L^2(D)}\leq \frac{1}{2}C_0\hat{r}_n}. \label{eq:triangle-argument}
    \end{eqnarray}
    We shall consider the two factors of the right-hand side in separate parts below:

    \noindent 1) We check that Condition \ref{cond:1.2}, \ref{cond:1.3}, and \ref{cond:1.1} are satisfied for the choice $A_n=\Phi(\Theta_n)$ with
\begin{equation}\label{eq:reg-sets-level} 
    \Theta_n := \{\theta = \theta_1+\theta_2: \|\theta_1\|_{\infty}\leq Mr_n^{\frac{1}{\eta}}n^{-k}, \|\theta_2\|_{\mathcal{H}}\leq M\} \cap \mathcal{S}_\beta(M),
\end{equation}
and $k$ to be chosen below. For \ref{cond:1.2} it is clear from \eref{eq:level-set-critical-points} that for each $n$,
    \begin{eqnarray}\nonumber \Pi_\theta(\Theta)=1.\end{eqnarray}
    As in the proof of Lemma \ref{lemma:excess-mass-2} there exists $M>C(C_2,\Pi_\theta',\delta)$ such that Condition \ref{cond:1.2} is satisfied, if $a=a(k)$ is such that
    \begin{equation}\label{eq:eq-on-a-level}
        (r_n^{\frac{1}{\eta}}n^{-k}n^{1/2-a})^{-b}= nr_n^2,
    \end{equation}
    for $b=\frac{2d}{2\delta-d}$. This is satisfied when
    \begin{equation}\label{eq:cond-on-a}
            a = a(k) = \frac{\eta(\delta-dk)}{2\delta\eta+d}, \quad \textnormal{and} \quad 0<k<\frac{\delta}{d},
    \end{equation}
    so that $0<a<1/2$. Condition \ref{cond:1.3} follows as in the proof of Lemma \ref{lemma:covering-numbers} with $\bar{r}_n$ replaced with $r_n^{\frac{1}{\eta}}n^{-k}$. Again it reduces to the covering number of the norm-ball in $H^\delta(\mathcal{X})$ for which we need $a$ such that
    \begin{eqnarray}\nonumber (r_n^{\frac{1}{\eta}}n^{-k})^{-d/\delta} = nr_n^2\end{eqnarray}
    as in \eref{eq:condition-on-a-covering}. This is indeed satisfied by \eref{eq:cond-on-a}. For Condition \ref{cond:1.1} we proceed as in the proof of Lemma \ref{lemma:small-ball} and use Lemma \ref{lemma:4.5} to obtain
    \begin{eqnarray}\nonumber \fl
		\{\theta \in \Theta: d_{\mathcal{G}}(\gamma^n,&\gamma_0^n)\leq r_n\}\supset \{\theta\in \Theta: \|\theta-\theta_0\|_{\infty} \leq Cn^{-k} r_{n}^{\frac{1}{\eta}}\} \cap \mathcal{S}_\beta(R), 
	\end{eqnarray}
    where $C=C(\eta,C_{\mathcal{G}},C_{\Phi},R)$. Continuing the argument and using \eref{eq:level-set-critical-points}, Condition \ref{cond:1.1} is satisfied for some $C_1>0$ if again $a$ satisfies \eref{eq:eq-on-a-level}. By Theorem \ref{thm:contr}
    \begin{eqnarray}\nonumber \Pi(B_{\mathcal{G}}(\gamma_0^n,Cr_n) \cap A_n | Y) \rightarrow 1 \quad \textnormal{ in $P_n^{\gamma_0}$-probability},\end{eqnarray}
    as $n \rightarrow \infty$ for some constant $C>0$. It follows that
\begin{eqnarray}\nonumber \Pi(\gamma: \|\gamma-\gamma_0^n\|_{L^2(D)} \leq Cr_n^\nu | Y) \rightarrow 1 \quad \textnormal{ in $P_n^{\gamma_0}$-probability},
\end{eqnarray}
with rate $e^{-bnr_n^2}$, $0<b<C_2-C_1-4$ as $n\rightarrow \infty$ as in Theorem \ref{thm:consist-in-Hbeta}.\\
\noindent 2) For the second factor, note that $\theta_0\in H_\diamond^\beta(\mathcal{X})$ and Lemma \ref{lemma:4.5} (i) implies
\begin{eqnarray}\nonumber \|\gamma_0^n-\gamma_0\|_{L^2(D)}\leq C'(\theta_0,\mathcal{X},D,\mathbf{c})n^{-k/2}.\end{eqnarray}
Since $r_n^\nu=n^{-a(k)\nu}$ is a strictly increasing function of $k$ (the rate becomes worse for larger $k$) and $n^{-k/2}$ is strictly decreasing in $k$, the optimal choice of $k$ satisfies $r_n^\nu = n^{-k/2}$, which is solved by
    \begin{equation}\label{eq:cond-on-k}
        k = \frac{2\delta \eta \nu}{2d\eta \nu +2\delta \eta + d},
    \end{equation}
which also satisfies the condition on $k$ in \eref{eq:cond-on-a}, since $\delta>d$. Inserting this back into \eref{eq:cond-on-a} yields \eref{eq:new-a}. Finally, take $C_0 = 2\max(C,C')$ and $\hat{r}_n = r_n^\nu$ and note by \eref{eq:triangle-argument} that
\begin{eqnarray}\nonumber \fl \Pi(\gamma:\|\gamma-\gamma_0\|_{L^2(D)}\leq C_0 r_n^\nu |Y) \geq \Pi(\gamma:\|\gamma-\gamma_0^n\|_{L^2(D)}\leq \frac{1}{2}C_0r_n^\nu|Y)\rightarrow 1,\end{eqnarray}
in $P_n^{\gamma_0}$-probability as $n\rightarrow \infty$. Then the wanted result follows as in Corollary \ref{corollary:post-mean}.
\end{proof}

\end{appendices}

\bibliographystyle{unsrt} 
\bibliography{refs} 

\medskip
\medskip

\end{document}